\documentclass[10pt]{amsart}
\usepackage{amsmath,amssymb,latexsym,cancel,rotating}
\usepackage{tikz-cd}
\usepackage{tikz}
\usepackage{graphicx,mathrsfs,color,fancyhdr,amsthm, float}
\usepackage[all]{xy}
\usepackage{geometry}
\usepackage{hyperref}
\usepackage{setspace}
\newtheorem{theorem}{Theorem}[section]
\newtheorem{lemma}[theorem]{Lemma}
\newtheorem{corollary}[theorem]{Corollary}
\newtheorem{definition}[theorem]{Definition}
\newtheorem{proposition}[theorem]{Proposition}

\newtheorem{remark}[theorem]{Remark}

 \def\cD{{\mathcal D}}    
\def\cH{{\mathcal H}}     
 \def\cO{{\mathcal O}}

\def\bbN{{\mathbb N}}  \def\bbZ{{\mathbb Z}}  \def\bbQ{{\mathbb Q}}
    \def\bbF{{\mathbb F}}
\def\bbC{{\mathbb C}}    \def\bbP{{\mathbb P}}

         \def\bfU{{\bf U}}
   
  \def\leq{\leqslant}  \def\geq{\geqslant}

\def\Hom{\mbox{\rm Hom}}  
       \def\im{\mbox{\rm Im}\,}
\def\Ext{\mbox{\rm Ext}}   
\def\dim{\mbox{\rm dim}}   
\def\Aut{\mbox{\rm Aut}}

  \def\tr{\mbox{\rm tr}}

\def\bfV{{\mathbf V}}

\def\bfH{{\mathbf H}}

\def\supp{{\rm supp}}

\def\bfV{\mathbf{V}}
\def\bfW{\mathbf{W}}
\def\bfE{\mathbf{E}}
\def\bfG{\mathbf{G}}

\def\bfU{\mathbf{U}}

\def\Ind{\mathbf{Ind}}

\newcommand\bbq{{\mathbb Q}}

\newcommand\mk{{\mathcal{K}}}

\newcommand\mf{{\mathcal{F}}}

\newcommand\mo{{\mathcal{O}}}

\newcommand\ml{{\mathcal{L}}}

\begin{document}

\title[]{Geometric realization of affine bases: the Kronecker quiver case}
\author{Yumeng Wu, Jie Xiao}
\address{Beijing International Center for Mathematical Research, Beijing 100871, P. R. China}
\email{2506397175@pku.edu.cn (Y. Wu)}
\address{School of Mathematical Sciences, Beijing Normal University, Beijing 100875, P. R. China}
\email{jxiao@bnu.edu.cn (J. Xiao)}

\subjclass[2000]{16G20, 17B37}

\keywords{}

\bibliographystyle{abbrv}
\begin{abstract}

In this paper, we study the transition matrix between the PBW basis and the canonical basis for the negative part of the quantized enveloping algebra of the Kronecker quiver from a geometric viewpoint. Building on Lusztig’s geometric construction of the canonical basis, we construct sheaf-complex realizations of PBW basis elements by means of flag sheaf complexes over the strata $X(\alpha,m)$ of representation varieties. 

Our first goal is to give a geometric description of the simple constituents appearing in the restrictions of these flag sheaf complexes to the strata $X(\alpha,m)$. This allows us to compare the PBW-type sheaf complexes with the simple perverse sheaves $IC(X(\alpha),L_\chi)$ arising in Lusztig’s construction. Using this description together with a purity result for the relevant $\bbF_q$-structures, we obtain another proof that the elements defined by Lusztig’s perverse sheaves indeed form a basis of the composition algebra.

Our second goal is to make the transition coefficients between the PBW basis and the canonical basis geometrically explicit. More precisely, we show that these coefficients are governed by the multiplicities of local systems in the restrictions of intersection cohomology complexes to smaller strata. As a consequence, the transition matrix from the canonical basis to the PBW basis is upper triangular with diagonal entries equal to $1$, and its coefficients admit a direct geometric interpretation. In particular, in the Kronecker quiver case we recover the triangularity of the transition matrix and obtain positivity properties of the corresponding coefficient polynomials.

\end{abstract}

\maketitle
\setcounter{tocdepth}{1}\tableofcontents
\begin{spacing}{1.5}

\section*{Introduction}

Lusztig constructed the negative part of the quantized enveloping algebra associated with a Dynkin quiver by geometric methods, and realized its canonical basis in terms of simple perverse sheaves in \cite{lusztig1990canonical}. In the same work, he also gave a geometric interpretation of the transition matrix between a PBW basis and the canonical basis. Later, in \cite{Lusztig1992}, Lusztig developed a geometric construction of the negative part of the quantized enveloping algebra and canonical basis associated with a tame quiver (see more details in \cite{li2007notesaffinecanonicalmonomial}). It is therefore natural to seek an extension of the methods and results of \cite{lusztig1990canonical} from Dynkin quivers to tame quivers.

There has been substantial work on the construction of PBW bases and canonical bases via representations of tame quivers. A comprehensive construction of PBW bases of affine type was provided by Beck and Nakajima by purely algebraic methods \cite{2002Crystal}. In the case of the Kronecker quiver, McGerty identified the restriction of sheaf complexes corresponding to the imaginary part of a PBW basis \cite{MCGERTY2005411}, which is the main motivation for our work. Related studies of PBW bases and their relation to canonical bases by KLR algebra methods can be found in \cite{10.1215/00127094-2681278} and \cite{McNamara2017}. Related work on canonical bases and PBW bases from representations of tame quivers has been carried out by Lin--Xiao--Zhang\cite{linxiao}, Li--Lin\cite{Li2006ARquiverAT}, Xiao–Xu–Zhao\cite{XIAO2023510}, and Shoji--Zhou\cite{shoji2026monomialbasescanonicalbases}.

In the Dynkin case, using Lusztig’s geometric construction, Kato constructed sheaf complexes corresponding to PBW bases and provided a model for understanding the relationship between the PBW basis and the canonical basis \cite{cd088214-009e-3813-9a86-97f9a299a24c,2012PBW}. The Kronecker quiver is the first and most fundamental example beyond Dynkin type, and serves as a testing ground for the general tame case. The aim of this paper is to develop a geometric construction of PBW-type bases for the Kronecker quiver and to compare them with the canonical basis realized by simple perverse sheaves. In particular, we seek to give a geometric interpretation of the transition coefficients between these bases. More precisely, for the strata $X(\alpha,m)$ (see Definition~\ref{deff}) in the representation varieties, we consider flag sheaf complexes obtained from Lusztig’s induction functors and analyze their restrictions to these strata. Our first main result of the paper shows that every simple constituent appearing in the perverse cohomology of the restricted flag sheaf complexes is of the form $j_{\alpha',m'}^*IC(X(\alpha'),L_\chi)$. This gives a precise geometric description of the relevant simple constituents and enables us to control the comparison between the two bases. 

Our second result is a purity statement for the $\bbF_q$-structures underlying these complexes. In this paper, this is formulated as the pointwise purity of the flag sheaf complex $\mathcal{L}_0$, together with the purity of its restrictions. This purity result allows us to pass from geometric decompositions to trace functions and hence obtain the precise relations between the PBW basis and the canonical basis in the quantized enveloping algebra. In particular, it provides another route to prove that the elements corresponding to the perverse sheaves $IC(X(\alpha),L_\chi)$ form a basis. 

After establishing these geometric and purity properties, we turn to the transition coefficients between the canonical basis and the PBW basis. The main point is that these coefficients are not merely formal structure constants; rather, they admit a direct geometric interpretation. More precisely, they are expressed in terms of the multiplicities of local systems $L_{\chi'}$ occurring in the restrictions of $IC(X(\alpha),L_\chi)$ to smaller strata. This gives a geometric realization of the coefficient polynomials $p_{(\alpha,\chi),(\alpha',\chi')}(v)$. As a consequence, we obtain that the transition matrix is upper triangular with respect to the natural order on $(\alpha,m)$, that its diagonal entries are equal to $1$, and that its off-diagonal coefficients satisfy positivity. 

The paper is organized as follows. In Section 1, we review the representation theory of the Kronecker quiver and the stratification of the corresponding moduli spaces by the subsets $X(\alpha,m)$. In Section 2, we recall Lusztig’s construction and the relevant flag sheaf complexes, together with the explicit description of some flag sheaves on $X(\alpha,m)$. In Section 3, we study the restrictions of arbitrary flag sheaf complexes to $X(\alpha,m)$ and prove the key description of their simple constituents. In Section 4, we establish the purity of the corresponding complexes. In Section 5, we use these results to derive the geometric formula for the coefficients between the PBW basis and the canonical basis, as well as the triangularity and positivity properties of the associated polynomials. The final two sections are devoted to the proofs of the main technical results.

\section{Stratification of moduli spaces for the Kronecker quiver}

The Kronecker quiver is a particularly important example among tame quivers. A detailed study of the representations of the Kronecker quiver provides a new perspective on the relationship between the canonical basis and the PBW basis. In this section, we recall several important properties of the representations of the Kronecker quiver, and end with a stratification of the corresponding moduli space. Throughout, we assume that the base field is the algebraic closure $k=\overline{\bbF_q}$ of the finite field $\bbF_q$, where $\operatorname{char} k=p$ and $q=p^r$ for some $r\in \bbN_{>0}$.

\begin{definition}
A Kronecker quiver is a quiver $Q=(I,H, s,t),$ where
\[
I=\{1,2\},\qquad
H=\{h_1,h_2\},\qquad
\]
and
\[
s(h_i)=1,\quad t(h_i)=2
\qquad (i=1,2).
\]
\end{definition}

This quiver can be depicted as
\[
\begin{tikzpicture}[>=Stealth, node distance=3cm]
 \tikzstyle{vertex}=[circle,draw,minimum size=18pt,inner sep=0pt]
 \node[vertex] (1) {$1$};
 \node[vertex] (2) [right of=1] {$2$};
 \draw[->] (1) to[bend left=20] node[above] {$\alpha$} (2);
 \draw[->] (1) to[bend right=20] node[below] {$\beta$} (2);
\end{tikzpicture}
\]
The representations of the Kronecker quiver $Q$ are of the form $(\bfV, x)$, where $\bfV=\bfV_1\oplus\bfV_2$ and
\[
x\in \bigoplus_{h\in H}\Hom_k(\bfV_{s(h)},\bfV_{t(h)})=: \bfE_{\bfV}.
\]
We denote the representation corresponding to $(\bfV, x)$ as $M_x.$

Here, $\bfE_{\bfV}$ is the corresponding representation space for $\bfV$, and we set $$\bfG_{\bfV}=\Pi_{i\in I}\operatorname{GL}_k(\bfV_i).$$

$\bfG_{\bfV}$ acts on $\bfE_{\bfV}$ as $g((x_h)_{h\in H})=(g_{t(h)}(x_h)g_{s(h)}^{-1})_{h\in H},$ for $g\in \bfG_{\bfV}$ and $(x_h)_{h\in H}\in \bfE_{\bfV}.$
Both $\bfE_{\bfV}$ and $\bfG_{\bfV}$ carry canonical $\bbF_q$-structures.

\begin{definition}
The isomorphism classes of indecomposable preprojective representations of $Q$ of dimension $(s-1,s)$ can be indexed by $s\in \bbN_{\geq 1}$. We denote by $P_s$ a representative of the indecomposable preprojective representation corresponding to $s$.

The isomorphism classes of indecomposable preinjective representations of $Q$ of dimension $(s,s-1)$ can be indexed by $s\in \bbN_{\geq 1}$. We denote by $I_s$ a representative of the indecomposable preinjective representation corresponding to $s$.
\end{definition}

Set
\[
R:=\bigoplus_{s\in \bbN_{\geq 1}}\bbN P_s \;\oplus\; \bigoplus_{s\in \bbN_{\geq 1}}\bbN I_s.
\]
Let $\alpha\in R$ be of the form $\alpha=\sum_s \alpha_s^P P_s+\sum_s \alpha_s^I I_s$, and assume that
\[
\dim \bfV-\sum_s \alpha_s^P(s-1,s)-\sum_s \alpha_s^I(s,s-1)=(m,m).
\]
We also write
\[
P_{\alpha}=\bigoplus_s P_s^{\alpha_s^P},
\qquad
I_{\alpha}=\bigoplus_s I_s^{\alpha_s^I}.
\]

\begin{lemma}\cite{Lusztig1992}\label{le}
For $s,s'\in \bbN$ with $s'>s$, we have $\Hom(P_{s'},P_s)=0$ and $\Ext^1(P_s,P_{s'})=0$.

For $s,s'\in \bbN$ with $s'>s$, we have $\Hom(I_s,I_{s'})=0$ and $\Ext^1(I_{s'},I_s)=0$.

For any $s,s'\in \bbN$ and any indecomposable regular representation $H_r$, we have
\[
\Hom(I_{s'},\, H_r\oplus P_s)=0,\qquad \Hom(H_r,P_s)=0,
\]
and
\[
\Ext(P_s,\, I_{s'}\oplus H_r)=0,\qquad \Ext(H_r,I_{s'})=0.
\]
For regular modules $H_{r_1}$ and $H_{r_2}$ in different tubes, $\Ext(H_{r_i},H_{r_j})=\Hom(H_{r_i},H_{r_j})=0,$ for $i\not=j.$
\end{lemma}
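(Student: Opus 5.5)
The plan is to prove everything from the Euler form of the Kronecker quiver together with standard facts about preprojective, preinjective and regular modules; Lusztig's lemma is quoted, so the goal is only a self-contained verification. For dimension vectors $\underline{a}=(a_1,a_2)$ and $\underline{b}=(b_1,b_2)$ the Euler form is
\[
\langle \underline{a},\underline{b}\rangle=\dim\Hom(M,N)-\dim\Ext^1(M,N)=a_1b_1+a_2b_2-2a_1b_2 .
\]
Since $kQ$ is hereditary, once one of $\Hom$ or $\Ext^1$ is known to vanish, the other is determined by this form. I will use the Auslander--Reiten translate $\tau$. The preprojectives $P_s$, of dimension $(s-1,s)$, satisfy $\tau P_{s+2}=P_s$. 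Dually, $\tau I_s=I_{s+2}$. Regular modules are $\tau$-periodic of period $1$ for the Kronecker quiver, so every tube is homogeneous.

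\textbf{Step 1: the preprojective statements.} For $\Hom(P_{s'},P_s)=0$ with $s'>s$, I use the standard fact that the preprojective component is directed: a nonzero map between indecomposable preprojectives goes from $P_a$ to $P_b$ only when $a\le b$. Alternatively, one can use that $P_s$ is rigid and that the $P_s$ form a chain of exceptional modules whose Hom only goes "forward". For $\Ext^1(P_s,P_{s'})$ I apply the AR formula
\[
\Ext^1(P_s,P_{s'})\cong D\Hom(P_{s'},\tau P_s)=D\Hom(P_{s'},P_{s-2}),
\]
where $\tau P_s=0$ if $s\le 2$. This vanishes by the first part, since $s'>s-2$. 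As a check, one can compute $\langle (s-1,s),(s'-1,s')\rangle=s'-s+1>0$, which is consistent with $\Hom\neq0$ and $\Ext=0$. The preinjective statements follow by the dual argument, applying the duality $D$ together with the opposite quiver, which swaps $P_s$ and $I_s$.

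\textbf{Step 2: mixed terms.} I need the vanishing of $\Hom(\mathcal I,\mathcal R\oplus\mathcal P)$ and of $\Hom(\mathcal R,\mathcal P)$, where $\mathcal P,\mathcal R,\mathcal I$ denote preprojective, regular and preinjective modules. This is the standard ordering $\mathcal P\to\mathcal R\to\mathcal I$. If $f:I\to X$ were nonzero with $X$ not preinjective, its image would be a quotient of $I$, hence preinjective, because quotients of preinjectives are preinjective: $\tau^{-n}$ kills them, or equivalently the defect is positive. A preinjective cannot embed in a preprojective or in a regular module, by the defect argument. The defect is $\partial(\underline a)=a_2-a_1$, up to sign. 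It is additive and is positive on $\mathcal P$, zero on $\mathcal R$ and negative on $\mathcal I$, and every submodule of a regular module has defect $\ge0$. The vanishing of $\Hom(\mathcal R,\mathcal P)$ is shown in the same way. For the Ext statements I apply the AR formula:
\[
\Ext(P_s,X)=D\Hom(X,\tau P_s),
\]
which is zero for $X$ regular or preinjective. Likewise $\Ext(H_r,I)=D\Hom(I,\tau H_r)=D\Hom(I,H_r)=0$.

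\textbf{Step 3: regular modules in different tubes.} For $H_{r_1},H_{r_2}$ in different tubes, the nonzero maps between regular modules have regular kernel and image, because the regular category is abelian and closed under extensions. The image is therefore a regular module that is simultaneously a quotient of $H_{r_1}$ and a submodule of $H_{r_2}$. Regular uniserial modules have a unique regular simple in their top and in their socle, and these lie in their own tube, so the image is $0$. Then
\[
\Ext(H_{r_1},H_{r_2})=D\Hom(H_{r_2},\tau H_{r_1})=D\Hom(H_{r_2},H_{r_1})=0 .
\]

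The main obstacle is bookkeeping rather than depth: I have to fix the convention that $P_s$ has dimension $(s-1,s)$, consistent with the arrows going from $1$ to $2$ and with which simple is projective. I also have to confirm the direction of $\tau$ on the indices. I would verify this first on the small cases $P_1=S_2$ and $P_2$ of dimension $(1,2)$ before carrying out the general argument.
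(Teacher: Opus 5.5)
The paper gives no proof of this lemma. It is quoted from \cite{Lusztig1992} and used as a black box, in the proof of Proposition~\ref{suan} and in Section~\ref{sec}. Your proposal is correct and supplies a genuine argument. Your conventions ($\dim P_s=(s-1,s)$, $\tau P_{s+2}=P_s$, $\tau I_s=I_{s+2}$, Euler form $a_1b_1+a_2b_2-2a_1b_2$) are right for arrows $1\to 2$. The hereditary Auslander--Reiten formula $\Ext^1(M,N)\cong D\Hom(N,\tau M)$ turns every Ext claim into a Hom claim. The Hom claims then follow from the ordering $\mathcal P\to\mathcal R\to\mathcal I$ and the homogeneous-tube structure. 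The citation buys brevity. Your route buys independence from Lusztig's paper, but it rests on standard tame-hereditary facts, some of which you state loosely.

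A few places are worth tightening.

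In Step~1, ``directedness of the preprojective component'' is essentially the claim itself. You can make it self-contained with $\Hom(\tau^{-1}X,Y)\cong\Hom(X,\tau Y)$; both sides are dual to $\Ext^1(Y,X)$ for a hereditary algebra. This reduces $\Hom(P_{s'},P_s)$, $s'>s$, either to $0$ (once $\tau P_s=0$) or to $\Hom(P_2,P_1)$. The latter vanishes because $P_2$ is the projective cover of $S_1$, while $P_1=S_2$ is zero at vertex $1$.

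In Step~2, the reason a quotient of a preinjective module is preinjective is right exactness of $\tau^{-1}$, which gives $\tau^{-n}I\twoheadrightarrow\tau^{-n}(\im f)$. The clause ``equivalently the defect is positive'' is not an equivalence: $I_1\oplus H_r$ has the preinjective sign of defect without being preinjective. It also uses the opposite sign to your own convention.

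The defect facts you invoke for submodules are best proved directly from left exactness of $\tau$. A submodule $U\subseteq X$ gives $\tau^nU\hookrightarrow\tau^nX$. If $X$ is preprojective, $\tau^nX=0$ for $n\gg0$; if $X$ is regular, $\tau^nX\cong X$. In either case $U$ has no preinjective summand, since $\dim\tau^nI_t=\dim I_{t+2n}$ is unbounded.

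In Step~3 it is cleaner to use regular composition factors. A regular quotient of $H_{r_1}$ has all of them in the tube of $H_{r_1}$, and a regular submodule of $H_{r_2}$ has all of them in the tube of $H_{r_2}$. Jordan--H\"older in the abelian category of regular modules then forces the image to vanish.
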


\begin{definition}\label{deff}
For $\alpha,m$ as above, define
\[
X(\alpha,m):=\left\{x\in \bfE_{\bfV}\ \middle|\ 
\begin{aligned}
&M_x\cong P_{\alpha}\oplus H_r\oplus I_{\alpha},\\
& H_r\text{ is a direct sum of indecomposable regular modules}
\end{aligned}\right\}.
\]
This is an irreducible locally closed subvariety of $\bfE_{\bfV}.$ Denote by $j_{\alpha,m}:X(\alpha,m)\to \bfE_{\bfV}$ the natural inclusion.
\end{definition}

Following \cite{XIAO2023510}, we may also define an order on the set $R$: for $\alpha,\beta\in R$, we write $\alpha>\beta$ if there exist $l>0$ and $l'>0$ such that $\alpha_s^P=\beta_s^P$ for $1\le s<l$, $\alpha_l^P>\beta_l^P$, and $\alpha_s^I=\beta_s^I$ for $1\le s<l'$, $\alpha_{l'}^I>\beta_{l'}^I$ or $\alpha_s^P=\beta_s^P$ for $1\le s<l$, $\alpha_l^P>\beta_l^P$ and $\alpha_s^I=\beta_s^I$ for any $s,$ or $\alpha_s^I=\beta_s^I$ for $1\le s<l'$, $\alpha_{l'}^I>\beta_{l'}^I$ and $\alpha_s^P=\beta_s^P$ for any $s.$

\begin{definition}
For an $I$-graded space $\bfV$, consider the set of pairs
\[
\{(\alpha,m)\mid (m,m)+\sum_{s\ge 1}\alpha_s^P(s-1,s)+\alpha_s^I(s,s-1)=\dim \bfV\}.
\]
We define an order on this set by
\[
(\alpha',m')\le (\alpha,m)\quad \text{if and only if}\quad \alpha'\ge \alpha.
\]
\end{definition}

\begin{lemma}\cite[Proposition 7.3]{XIAO2023510}\label{stra}
In $\bfE_{\bfV}$, we have
\[
\overline{X(\alpha,m)}\subset \bigcup_{(\alpha',m')\le (\alpha,m)} X(\alpha',m').
\]
\end{lemma}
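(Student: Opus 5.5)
The plan is to show that the preprojective and preinjective multiplicities can only ``increase'' under specialization, in the sense of the order on $R$. This order is lexicographic, separately on the $P$-part and on the $I$-part, and these are then combined. Take $x\in\overline{X(\alpha,m)}$ with $M_x\cong P_{\alpha'}\oplus H'\oplus I_{\alpha'}$. We must show $\alpha'\ge\alpha$, i.e.\ $\alpha'=\alpha$ or $\alpha'>\alpha$.

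The key tool is upper semicontinuity of $\dim\Hom(N,M_x)$ and $\dim\Hom(M_x,N)$ in $x$, for any fixed representation $N$. These loci $\{x:\dim\Hom(N,M_x)\ge d\}$ are closed, since they are rank conditions on a linear map depending algebraically on $x$. I would use the test modules $N=I_s$ for the preinjective part, via $\dim\Hom(M_x,I_s)$, and $N=P_s$ for the preprojective part, via $\dim\Hom(P_s,M_x)$. Equivalently, one can use the Auslander--Reiten defect-type functions. By Lemma~\ref{le}, $\Hom(P_s,-)$ vanishes on $P_{s'}$ for $s'<s$. A cleaner choice is to use the functionals $\dim\Hom(M_x,P_s)$. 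These vanish on regular and preinjective summands and on $P_{s'}$ with $s'>s$, and they are positive (computable via the Euler form) on $P_{s'}$ with $s'\le s$. Thus $\dim\Hom(M_x,P_s)=\sum_{s'\le s}\alpha^P_{s'}\,\dim\Hom(P_{s'},P_s)$ is a triangular combination of $\alpha^P_1,\dots,\alpha^P_s$ with nonzero diagonal term $\dim\End(P_s)=1$. Dually, $\dim\Hom(I_s,M_x)=\sum_{s'\le s}\alpha^I_{s'}\dim\Hom(I_s,I_{s'})$.

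Semicontinuity gives $\dim\Hom(M_x,P_s)\ge\dim\Hom(M_y,P_s)$ for generic $y\in X(\alpha,m)$, and this holds for every $s$; similarly for the $I$-functionals. I would then argue inductively on $s$. Let $l$ be the first index with $\alpha'^P_l\ne\alpha^P_l$. Equality of the earlier coordinates together with the triangular formula and the inequality at index $l$ forces $\alpha'^P_l>\alpha^P_l$. The $I$-part is handled the same way. Combining the two gives one of the three cases in the definition of $\alpha'>\alpha$, or $\alpha'=\alpha$. The value of $m'$ is then determined by the dimension vector, so $(\alpha',m')\le(\alpha,m)$.

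The main obstacle is the exact computation of $\dim\Hom(P_{s'},P_s)$ for $s'\le s$, and the corresponding computation for the $I$'s, to make sure the triangular system has the required sign and diagonal. The worry is that the first differing coordinate might go down, with this being compensated in the functional by nothing. It is not compensated, because earlier coordinates agree and later ones contribute zero. One must still check that the functionals genuinely vanish on $P_{s'}$ for $s'>s$; this is exactly the $\Hom(P_{s'},P_s)=0$ part of Lemma~\ref{le}. Regular and preinjective summands contribute zero by the remaining vanishing statements in Lemma~\ref{le}.
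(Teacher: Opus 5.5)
Your argument is correct and complete. The paper gives no proof of this statement; it imports it from \cite[Proposition 7.3]{XIAO2023510}. So the comparison is with a black box rather than with an in-paper argument.

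Your route treats the two halves of the order separately. For each $s$, the functions $x\mapsto\dim\Hom(M_x,P_s)$ and $x\mapsto\dim\Hom(I_s,M_x)$ are upper semicontinuous on $\bfE_{\bfV}$, being kernel dimensions of linear maps whose matrices depend linearly on $x$. By Lemma~\ref{le}, regular summands and summands of the opposite type contribute nothing. Hence these functions take the constant values $\sum_{s'\le s}\alpha^P_{s'}\dim\Hom(P_{s'},P_s)$ and $\sum_{s'\le s}\alpha^I_{s'}\dim\Hom(I_s,I_{s'})$ on all of $X(\alpha,m)$. Unitriangularity then forces the first differing $P$-coordinate and the first differing $I$-coordinate to go up. This says exactly that the $P$-part and the $I$-part of $\alpha'$ are each lexicographically $\ge$ those of $\alpha$, which is $\alpha'\ge\alpha$ in the product order the paper uses.

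What your approach buys is a self-contained proof. It needs only Lemma~\ref{le} and the standard fact that $P_s$ and $I_s$ are bricks, i.e.\ $\Hom(P_s,P_s)=\Hom(I_s,I_s)=k$. State that fact explicitly, since Lemma~\ref{le} does not contain it. You also get the slightly sharper conclusion that $\overline{X(\alpha,m)}$ lies in the closed locus cut out by these Hom-inequalities.

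A few points for the write-up:
\begin{itemize}
\item Drop the first-mentioned test functions $\dim\Hom(P_s,M_x)$ and $\dim\Hom(M_x,I_s)$. They do not isolate $\alpha^P$ and $\alpha^I$; for example, $\Hom(P_s,H)\neq 0$ for every nonzero regular $H$.
\item The appeal to a ``generic'' $y\in X(\alpha,m)$ is unnecessary. The value is constant on $X(\alpha,m)$, so $X(\alpha,m)$ already lies in the closed set defined by the inequality.
\item Positivity of the off-diagonal coefficients $\dim\Hom(P_{s'},P_s)$ for $s'<s$ plays no role. Only vanishing for $s'>s$ and the diagonal value $1$ are used.
\end{itemize}
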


This lemma implies that $X(\alpha,m)$ is open in its closure.
\section{Flag sheaf complexes}

In this section, we recall Lusztig's geometric construction of flag sheaf complexes in the Kronecker case \cite{Lusztig1992} and give explicit descriptions of certain flag sheaf complexes on $X(\alpha,m)$.

Since $\bfE_{\bfV}$ has a natural $\bbF_p$-structure with Frobenius map $\operatorname{Fr}$, Lusztig also defines $\cD_{m,\bfG_{\bfV}}^{b}(\bfE_{\bfV})$ as the derived category of mixed $\bfG_{\bfV}$-equivariant sheaf complexes $(\cD_{m,r,\bfG_{\bfV}}^{b}(\bfE_{\bfV}) \text{ with Frobenius map } \operatorname{Fr}^r \text{ if } q=p^r\bigr).$ The mixed structure is taken with respect to the $\bbF_q$-structure of the variety, which gives rise to a Weil structure on $\cD_{\bfG_{\bfV}}^{b}(\bfE_{\bfV}),$ the derived category of $\bfG_{\bfV}$-equivariant sheaf complexes. We will review the induction functor defined by Lusztig.

\[
\Ind_{\nu',\nu''}^{\nu}:
\cD_{m,\bfG_{\bfV'}}^{b}(\bfE_{\bfV'})\times
\cD_{m,\bfG_{\bfV''}}^{b}(\bfE_{\bfV''})
\longrightarrow
\cD_{m,\bfG_{\bfV}}^{b}(\bfE_{\bfV}),
\]
where the dimension vectors of the $I$-graded vector spaces $\bfV$, $\bfV'$, $\bfV''$ are $\nu$, $\nu'$, $\nu''$, respectively. Let
\[
\bfE_{\bfV}'=
\left\{(x,\bfW,\rho_1,\rho_2)\ \middle|\ 
\begin{aligned}
&x\in \bfE_{\bfV},\ \bfW\cong \bfV'' \text{ as an $I$-graded space},\\
&\rho_1:\bfV/\bfW\cong \bfV',\ \rho_2:\bfW\cong \bfV''
\end{aligned}\right\},
\]
\[
\bfE_{\bfV}''=
\left\{(x,\bfW)\ \middle|\ x\in \bfE_{\bfV},\ \bfW\cong \bfV'' \text{ as an $I$-graded space}\right\},
\]
with maps defined by
\[
p_1(x,\bfW,\rho_1,\rho_2)
=
\bigl(\rho_1(x|_{\bfV/\bfW})\rho_1^{-1},\ \rho_2(x|_{\bfW})\rho_2^{-1}\bigr),
\qquad
p_2(x,\bfW,\rho_1,\rho_2)=(x,\bfW),
\qquad
p_3(x,\bfW)=x.
\]
\begin{equation}\label{eq111}
 \begin{tikzcd}
\bfE_{\bfV'}\times \bfE_{\bfV''} & \bfE_{\bfV}' \arrow[l, "p_1"'] \arrow[r, "p_2"] & \bfE_{\bfV}'' \arrow[r, "p_3"] & \bfE_{\bfV}
\end{tikzcd}
\end{equation}

We denote the dimensions of the fibers of the smooth morphisms $p_1$ and $p_2$ by $d_1$ and $d_2$, respectively. The induction functor is defined as
\[
\Ind_{\nu',\nu''}^{\nu}
:=
{p_3}_! \, {p_2}_{\flat} \, p_1^* [d_1-d_2]\left(\frac{d_1-d_2}{2}\right).
\]
Here ${p_2}_{\flat}$ is the inverse functor of $p_2^*.$
Lusztig then defines flag sheaf complexes as follows.
For $\nu\in \bbN I$ with $\dim \bfV=\nu$, the set of flags $S_{\nu}$ is defined by
\[
S_{\nu}=\{\,(\nu_1,\nu_2\cdots \nu_n) \mid\nu_j=l_ji_j,\ i_j\in I,\ \sum_{j=1}^n l_ji_j=\nu\,\}.
\]

For $\underline{\nu}=(\nu_j)\in S_{\nu}$, we consider the flag variety $\widetilde{\mathcal{F}_{\underline{\nu}}}$ defined by
\[
\left\{(x,f)\ \middle|\ 
\begin{aligned}
&x\in \bfE_{\bfV},\ f=(0=V_n\subset V_{n-1}\subset \cdots \subset V_1\subset V_0=\bfV)\text{ is a filtration of }\bbN I\text{-graded spaces},\\
&x(V_i)\subset V_i,\ \dim (V_{i-1}/V_i)=\nu_i
\end{aligned}\right\}.
\]
As shown in \cite[Chapter 9]{lusztig2010introduction}, this is a smooth irreducible variety; denote its dimension by $f(\underline{\nu}).$

There is a natural projective morphism $\pi_{\underline{\nu}}:\widetilde{\mathcal{F}_{\underline{\nu}}}\to \bfE_{\bfV}$. The flag sheaf complex corresponding to $\underline{\nu}$ is $$\mathcal{L}_{\underline{\nu}}:={\pi_{\underline{\nu}}}_!(\overline{\bbQ_l})[f(\underline{\nu})](\frac{f(\underline{\nu})}{2}).$$ By \cite{BBD}, it is semisimple. 

Now we define the Grothendieck group of $\cD_{m,r,\bfG_{\bfV}}^{b}(\bfE_{\bfV})$.
\begin{definition}
The Grothendieck group $\mk_{\bfV}^r$ of $\cD_{m,r,\bfG_{\bfV}}^{b}(\bfE_{\bfV})$ for $q=p^r$ is defined as follows.
\begin{itemize}
\item[$\bullet$] If $A\cong A'$ in $\cD_{m,r,\bfG_{\bfV}}^{b}(\bfE_{\bfV})$, then they have the same image in $\mk_{\bfV}^r$, denoted by $[A]$.
\item[$\bullet$] If $A\xrightarrow{f} B\xrightarrow{g} C\xrightarrow{+1}$ is a distinguished triangle in $\mathcal{D}^b_{G_{\mathbf{V}}}(\mathbf{E}_{\mathbf{V}})$ compatible with the Weil structure, then $[B]=[A]+[C]$ in $\mk_{\bfV}^r$.
\item[$\bullet$] For $k\in \bbZ$, we have $[A\otimes \overline{\mathbb{Q}_l}(k)]=q^{-k}[A]$, where $\overline{\mathbb{Q}_l}(k)$ denote the Tate twist.
\end{itemize}
\end{definition}
Induction functor $\Ind$ induces a multiplication $\cdot$ on the graded direct sum
\[
\mk^r=\bigoplus_{\bfV} \mk_{\bfV}^r.
\]

Let $X^{\operatorname{Fr}^r}$ and $G^{\operatorname{Fr}^r}$ denote the fixed point sets of $X$ and $G$ under their Frobenius maps ${\operatorname{Fr}^r}.$ Respectively, we denote by $\tilde{\cH}_{G^{\operatorname{Fr}^r}}(X^{\operatorname{Fr}^r})$ the $\bbC$-vector space of $G^{\operatorname{Fr}^r}$-invariant functions on $X^{\operatorname{Fr}^r}$. For any $G$-equivariant morphism $f:X\rightarrow Y$ which respects the $\bbF_q$-structures, we denote its restriction to $X^{\operatorname{Fr}^r}$ by the same letter $f:X^{\operatorname{Fr}^r}\rightarrow Y^{\operatorname{Fr}^r}$, and there are $\bbC$-linear maps
\begin{align*}
f^*:\tilde{\cH}_{G^{\operatorname{Fr}^r}}(Y^{\operatorname{Fr}^r})&\rightarrow \tilde{\cH}_{G^{\operatorname{Fr}^r}}(X^{\operatorname{Fr}^r}) &f_!: \tilde{\cH}_{G^{\operatorname{Fr}^r}}(X^{\operatorname{Fr}^r})&\rightarrow \tilde{\cH}_{G^{\operatorname{Fr}^r}}(Y^{\operatorname{Fr}^r})\\
\varphi&\mapsto (x\mapsto f(\varphi(x))), & \psi&\mapsto(y\mapsto \sum_{x\in f^{-1}(y)}\psi(x)).
\end{align*}

For any mixed Weil complex $A\in \cD^b_{G,r,m}(X)$ with the Weil structure $\xi:{\operatorname{Fr}^r}^*(A)\rightarrow A$ and $x\in X^{\operatorname{Fr}^r}, s\in \bbZ$, there is an isomorphism $H^s(\xi)_x:H^s(A)_x\rightarrow H^s(A)_x$ of the stalk at $x$ of the $s$-th cohomology sheaf. Taking the alternating sum of the traces of these isomorphisms, we obtain a value 
$$\chi(A)(x)=\sum_{s\in \bbZ}(-1)^s\tr(H^s(\xi)_x)\in\overline{\bbQ}_l\cong \bbC$$
and a function $\chi(A)\in \tilde{\cH}_{G^{\operatorname{Fr}^r}}(X^{\operatorname{Fr}^r})$. Moreover, $\chi({-})$ induces a map from the Grothendieck group of $\cD^b_{G,r,m}(X)$ to $\tilde{\cH}_{G^F}(X^F)$, see \cite[Lemma 5.3.12]{Pramod-2021}, which is called the trace map.

The multiplication on $\oplus_{\bfV}\cH_{\bfG_{\bfV}^{\operatorname{Fr}^r}}(\bfE_{\bfV}^{\operatorname{Fr}^r})$ is defined as follows.
\begin{definition}
 The multiplication $\cH_{\bfG_{\bfV'}^{\operatorname{Fr}^r}}(\bfE_{\bfV'}^{\operatorname{Fr}^r})\otimes \cH_{\bfG_{\bfV''}^{\operatorname{Fr}^r}}(\bfE_{\bfV''}^{\operatorname{Fr}^r})\rightarrow \cH_{\bfG_{\bfV}^{\operatorname{Fr}^r}}(\bfE_{\bfV}^{\operatorname{Fr}^r})$ is defined by $$fg=\frac{(-\sqrt{q})^{-\sum_{i\in I} \nu'_i\nu''_i-\sum_{h\in H}\nu'_{s(h)}\nu''_{t(h)}}}{|\bfG_{\bfV'}^{\operatorname{Fr}^r}\times \bfG_{\bfV''}^{\operatorname{Fr}^r}|}{p_3}_!{p_2}_!p_1^*(f\otimes g),$$ with $p_1,$ $p_2,$ $p_3$ in Equation~\ref{eq111}.
\end{definition}
\begin{theorem}[{\cite[Theorem 5.3.13]{Pramod-2021}}]\label{sheaf-function correspondence}
For any $G$-equivariant morphism $f:X\rightarrow Y$ which respects the $\bbF_q$-structures and $A\in \cD^b_{G,r,m}(X), B\in \cD^b_{G,r,m}(Y)$, we have
\begin{align*}
&\chi({A[n]})=(-1)^n\chi(A),\ \chi({A(\frac{n}{2})})=\sqrt{q}^{-n}\chi(A),\ \chi({A\boxtimes B})=\chi(A)\otimes \chi(B),\\
&\chi({f^*(B)})=f^*(\chi(B)),\ \chi({f_!(A)})=f_!(\chi(A)).
\end{align*}
\end{theorem}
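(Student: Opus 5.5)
The statement is the Grothendieck sheaf–function dictionary. The plan is to reduce each identity to stalks at rational points, and then to the Grothendieck–Lefschetz trace formula for $f_!$. For $x\in X^{\operatorname{Fr}^r}$ all stalks carry the endomorphism induced by the Weil structure $\xi$, so each claim becomes an identity of alternating traces on finite-dimensional $\overline{\bbQ}_l$-spaces.

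The formal identities are handled first.

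\emph{Shift.} Since $H^s(A[n])_x=H^{s+n}(A)_x$ compatibly with $\xi$, reindexing gives $\chi(A[n])=(-1)^n\chi(A)$.

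\emph{Tate twist.} $A(\tfrac n2)$ is $A\otimes\overline{\bbQ_l}(\tfrac n2)$, and $\operatorname{Fr}^r$ acts on $\overline{\bbQ_l}(\tfrac n2)$ by the scalar $\sqrt q^{-n}$. This uses the chosen square root of $q$ fixed in defining half twists. Traces therefore scale by $\sqrt q^{-n}$.

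\emph{Pullback.} For $f^*$, there is a canonical isomorphism $(f^*B)_x\cong B_{f(x)}$. It is compatible with the induced Weil structures because $f$ commutes with Frobenius, so the traces agree pointwise.

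\emph{External product.} By the Künneth formula for stalks, $H^\bullet(A\boxtimes B)_{(x,y)}\cong H^\bullet(A)_x\otimes H^\bullet(B)_y$ as graded Frobenius modules. The alternating trace is multiplicative on graded tensor products, which gives $\chi(A)\otimes\chi(B)$.

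\emph{Pushforward.} This is the substantive step. By proper base change,
\[
(f_!A)_y\cong R\Gamma_c\bigl(f^{-1}(y),A|_{f^{-1}(y)}\bigr),
\]
compatibly with Frobenius. It then suffices to prove $\sum_s(-1)^s\tr(\operatorname{Fr}^r,H^s_c(Z,K))=\sum_{z\in Z^{\operatorname{Fr}^r}}\chi(K)(z)$ for a variety $Z$ over $\bbF_q$ and a Weil complex $K$. This is Grothendieck's trace formula (SGA $4\tfrac12$, Rapport), extended from constructible sheaves to complexes by dévissage along the standard truncation triangles. The extension works because both sides are additive on distinguished triangles. I would cite this rather than reprove it, and it is the main obstacle.

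\emph{Equivariance.} The equivariant setting adds nothing. Every functor here is computed on the underlying varieties through the forgetful functor from the equivariant derived category, so $\chi$ lands in $G^{\operatorname{Fr}^r}$-invariant functions. Since $\chi$ is additive on triangles, the Grothendieck-group version follows as well.
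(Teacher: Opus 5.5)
Your proposal is correct and is the standard argument. The paper gives no proof of its own and simply cites \cite[Theorem~5.3.13]{Pramod-2021}, which rests on the same ingredients: stalkwise reduction at rational points, proper base change, and the Grothendieck--Lefschetz trace formula for $f_!$. (One small correction: $G^{\operatorname{Fr}^r}$-invariance of $\chi(A)$ does not come from the forgetful functor. It comes from the Frobenius-compatible isomorphisms $g^*A\cong A$, for $g\in G^{\operatorname{Fr}^r}$, supplied by the equivariant structure. This is not part of the stated identities in any case.)
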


Fix $\alpha\in R$ and an $I$-graded space $\bfV$ such that
\[
\dim \bfV=(m,m)+\sum_{s\ge 1}\alpha_s^P(s-1,s)+\alpha_s^I(s,s-1).
\]
Denote by $X(\alpha)=X(\alpha,(1,\cdots,1))$ the locally closed subspace
\[
\left\{x\in \bfE_{\bfV}\ \middle|\ 
\begin{aligned}
&M_x\cong P_{\alpha}\oplus H_r\oplus I_{\alpha},\\
&H_r\cong H_{r_1}\oplus\cdots\oplus H_{r_m},\\
&H_{r_i}\text{ is regular},\ \dim H_{r_i}=(1,1),\ H_{r_i}\not\cong H_{r_i}\ (i\ne j)
\end{aligned}\right\},
\]
and denote the embedding map by $j_{\alpha}:X(\alpha)\to \bfE_{\bfV}$. By \cite[Proposition 4.14(a)]{Lusztig1992}, $X(\alpha)$ is open, dense and smooth in its closure $\overline{X(\alpha)}.$ Moreover, by \cite[6.10]{Lusztig1992}, $X(\alpha,m)\subset\overline{X(\alpha)}.$ Thus $X(\alpha)\subset X(\alpha,m)\subset \overline{X(\alpha,m)}=\overline{X(\alpha)},$ and $X(\alpha)$ is an open subset of $X(\alpha,m)$ and $X(\alpha,m)$ is an open subset of $\overline{X(\alpha)}.$

In~\cite[6.14]{Lusztig1992}, for
\[
\alpha=\sum_{i=1}^r \alpha_{s_i}^P P_{s_i}+\sum_{j=1}^t \alpha_{s'_j}^I I_{s'_j},
\]
with $\alpha_{s_i}^P\ne 0$ and $\alpha_{s'_j}^I\ne 0$, and with
$s_{i'}\le s_{i}$ and $s'_{i'}\ge s'_i$ for $i'\le i$,
we consider the following complexes of sheaves.

First, we denote by $\bfV_{s_i}$ the $I$-graded space of dimension $\alpha_{s_i}^P(s_i-1,s_i)$, and by $\bfV'_{s'_j}$ the $I$-graded space of dimension $\alpha_{s'_j}^I(s'_j-1,s'_j)$. For a sequence $\underline{\lambda}=(\lambda_1,\ldots,\lambda_k)$ with $\sum_{i=1}^k \lambda_i=m$ and $\lambda_1\ge \lambda_2\ge \cdots \ge \lambda_k$, we denote by $\bfV_{\lambda_i}$ the $I$-graded space of dimension $(\lambda_i,\lambda_i)$.

If we write $\Ind_{\nu',\nu''}^{\nu}(\mathcal{F}\boxtimes \mathcal{G})$ as $\mathcal{F}\diamond \mathcal{G}$, then
\begin{equation}\label{alpha}
\mathcal{L}_{\alpha,\underline{\lambda}}:=\overline{\bbQ_l}|_{\bfE_{\bfV_{s_1}}}\diamond \cdots \diamond \overline{\bbQ_l}|_{\bfE_{\bfV_{s_r}}}
\diamond \overline{\bbQ_l}|_{\bfE_{\bfV_{\lambda_1}}}\diamond \overline{\bbQ_l}|_{\bfE_{\bfV_{\lambda_2}}}\cdots \diamond \overline{\bbQ_l}|_{\bfE_{\bfV_{\lambda_k}}}
\diamond \overline{\bbQ_l}|_{\bfE_{\bfV'_{s'_1}}}\diamond \cdots \diamond \overline{\bbQ_l}|_{\bfE_{\bfV'_{s'_t}}}
\end{equation}
is semisimple and supported on $\overline{X(\alpha,m)}$. Indeed, for every geometric point $i_x: \operatorname{Spec(\overline{\bbF_q})}\rightarrow \overline{X(\alpha,m)},$ $i_x^*\mathcal{L}_{\alpha,\underline{\lambda}}\not=0,$ because there exists $n\gg0$ such that after applying the trace map $\chi^n$ of $\operatorname{Fr_{q^n}}$, $\chi^n(\mathcal{L}_{\alpha,\underline{\lambda}})(x)\not=0$.

We denote by $f_{\bfV}$ the $\bbZ[\sqrt{q},\sqrt{q}^{-1}]$-subspace of $\mk_{\bfV}^r$ spanned by $[IC(X(\alpha),L_{\chi}),f_{\alpha,\chi}]$, where
\[
IC(X(\alpha),L_{\chi})={j_{\alpha}}_{!*}\bigl(L_{\chi}[\dim X(\alpha)](\frac{\dim X(\alpha)}{2})\bigr),
\]
\[
\dim \bfV=\nu=(m,m)+\sum_{s\ge 1}\alpha_s^P(s-1,s)+\alpha_s^I(s,s-1),
\]
 $\chi\in X(S_m)$ determines a local system $L_{\chi}$ on $X(\alpha)$ and the Weil structure $f_{\alpha,\chi}$ will be defined in Section \ref{4sec}. Since $\mathcal{L}_{\alpha,\underline{\lambda}}$ can be regarded as a flag sheaf $\mathcal{L}_{\underline{\nu}},$ and in this case $\pi_{\underline{\nu}}:\widetilde{\mathcal{F}_{\underline{\nu}}}\rightarrow \bfE_{\bfV}$ can be decomposed as $\widetilde{\mathcal{F}_{\underline{\nu}}}\rightarrow \overline{X(\alpha)}\rightarrow \bfE_{\bfV}$. The restriction of $\pi_{\underline{\nu}}$ to $\pi_{\underline{\nu}}^{-1}(X(\alpha))$ is a covering map with $S_m$-set $S_m/S_{\lambda_1}\times \cdots\times S_{\lambda_k}.$

 Thus \begin{equation}\label{eq121}
 \mathcal{L}_{\alpha,\underline{\lambda}}=\oplus_{\chi\in X(S_m)}IC(X(\alpha),L_{\chi})^{\oplus K_{\underline{\lambda}},\chi}\oplus L',
 \end{equation} where $\supp L'\cap X(\alpha)=\emptyset.$

For any representation $M$, we denote its $\bfG_{\bfV}$-orbit by $\cO_M$.

In what follows, we give explicit descriptions of $\mathcal{L}_{\alpha,\underline{\lambda}}$ on $X(\alpha,m)$.

\begin{proposition}\label{suan}
With the notation above, we have the following equality:
\[
\begin{aligned}
&j_{\alpha,m}^*\Bigl(
\overline{\bbQ_l}|_{\bfE_{\bfV_{s_1}}}\diamond \cdots \diamond \overline{\bbQ_l}|_{\bfE_{\bfV_{s_r}}}
\diamond \overline{\bbQ_l}|_{\bfE_{\bfV_{\lambda_1}}}\diamond \overline{\bbQ_l}|_{\bfE_{\bfV_{\lambda_2}}}\cdots \diamond \overline{\bbQ_l}|_{\bfE_{\bfV_{\lambda_k}}}
\diamond \overline{\bbQ_l}|_{\bfE_{\bfV'_{s'_1}}}\diamond \cdots \diamond \overline{\bbQ_l}|_{\bfE_{\bfV'_{s'_t}}}
\Bigr)\\
={}&\Bigl(
\overline{\bbQ_l}|_{\cO_{P^{\oplus \alpha_{s_1}^P}_{s_1}}}\diamond \cdots \diamond \overline{\bbQ_l}|_{\cO_{P^{\oplus \alpha_{s_r}^P}_{s_r}}}
\diamond \overline{\bbQ_l}|_{X(0,\lambda_1)}\diamond \overline{\bbQ_l}|_{X(0,\lambda_2)}\cdots \diamond \overline{\bbQ_l}|_{X(0,\lambda_k)}
\diamond \overline{\bbQ_l}|_{\cO_{I^{\oplus \alpha_{{s'}_1}^I}_{{s'}_1}}}\diamond \cdots \diamond \overline{\bbQ_l}|_{\cO_{I^{\oplus \alpha_{{s'}_t}^I}_{{s'}_t}}}
\Bigr).
\end{aligned}
\]
\end{proposition}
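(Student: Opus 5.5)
The plan is to compare the two sides via the geometry of the iterated induction, i.e. via the flag variety $\widetilde{\mathcal F}$ attached to the composite $\diamond$-product. Iterating the definition of $\Ind$ (associativity of induction, \cite{Lusztig1992}), the left-hand side is ${\pi}_!$ of a shifted, twisted constant sheaf on the variety of pairs $(x,f)$. Here $f=(0=V_N\subset\cdots\subset V_0=\bfV)$ is an $x$-stable filtration whose subquotients have the prescribed dimension vectors $\alpha^P_{s_i}(s_i-1,s_i)$, $(\lambda_j,\lambda_j)$, $\alpha^I_{s'_l}(s'_l,s'_l-1)$, in the order of (\ref{alpha}). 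The right-hand side is, in the same way, ${\pi'}_!$ of the constant sheaf on the locally closed subvariety $\widetilde{\mathcal F}'\subset\widetilde{\mathcal F}$. This subvariety consists of those $(x,f)$ with:
\begin{itemize}
\item the first subquotients lying in the orbits $\cO_{P_{s_i}^{\oplus\alpha^P_{s_i}}}$;
\item the middle ones lying in $X(0,\lambda_j)$, i.e. regular;
\item the last ones lying in $\cO_{I_{s'_l}^{\oplus\alpha^I_{s'_l}}}$.
\end{itemize}
The shifts agree because the orbit/stratum restrictions are open in the full representation spaces only up to the restriction shifts, which must be checked to be absorbed. By proper base change, $j_{\alpha,m}^*$ of the left side is ${\pi}_!$ of the constant sheaf on $\pi^{-1}(X(\alpha,m))$. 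The proposition therefore reduces to the set-theoretic (and scheme-theoretic, since both sides are reduced and smooth over the base in the relevant sense) equality
\[
\pi^{-1}(X(\alpha,m))=\widetilde{\mathcal F}'\cap \pi^{-1}(X(\alpha,m))=\widetilde{\mathcal F}',
\]
where the second equality says that $\pi(\widetilde{\mathcal F}')\subset X(\alpha,m)$.

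\textbf{Step 1: $\pi(\widetilde{\mathcal F}')\subset X(\alpha,m)$.} If the subquotients are $P$'s, then regulars, then $I$'s (read from the top quotient down to the bottom submodule, in the convention of $\Ind$), then the extensions split. This follows from Lemma~\ref{le}: $\Ext^1$ vanishes between the relevant pieces in the required direction ($\Ext(P_s,I\oplus H)=0$, $\Ext(H,I)=0$, and $\Ext^1(P_s,P_{s'})=0$ for $s'>s$ together with the ordering $s_1\le\cdots\le s_r$, and similarly for the $I$'s). Hence $M_x\cong P_\alpha\oplus H\oplus I_\alpha$ with $H$ regular of dimension $(m,m)$, so $x\in X(\alpha,m)$. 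One must verify carefully that the chosen order of the factors matches the direction in which the Ext-vanishing holds; this fixes the convention.

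\textbf{Step 2: the reverse inclusion.} Take $x\in X(\alpha,m)$ with $M_x\cong P_\alpha\oplus H\oplus I_\alpha$ and any flag in $\pi^{-1}(x)$; we must show each subquotient lies in the indicated orbit or stratum. I would argue by induction along the filtration, using dimension vectors and defect. The defect $\partial(M)=\dim M_1-\dim M_2$ is additive, is negative on preprojectives and positive on preinjectives, and preprojective summands of submodules and preinjective summands of quotients are controlled by the Hom-vanishings in Lemma~\ref{le}.

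First consider the bottom submodule $W$ of dimension $\alpha^I_{s'_t}(s'_t,s'_t-1)$, which has defect $\alpha^I_{s'_t}$. Since $\Hom(I_{s'},H\oplus P)=0$, the preinjective part of $W$ maps into $I_\alpha$. Moreover, $W$ is a submodule of $M_x$, so it has no preinjective summand $I_s$ with $s>s'_t$ when the corresponding Hom vanishes. A defect/dimension count then forces $W\cong I_{s'_t}^{\oplus\alpha^I_{s'_t}}$: a submodule with this dimension vector and containing a preprojective or regular summand would need strictly larger preinjectives, which are unavailable. After this, $M_x/W\cong P_\alpha\oplus H\oplus I_{\alpha-\alpha^I_{s'_t}}$, using the splitting from Step 1 and Krull--Schmidt, and one recurses. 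The symmetric argument handles the top $P$-quotients, and the leftover middle part has dimension $(m,m)$ with no $P$ or $I$ summands. Every subquotient of a regular module of defect $0$ with dimension vector $(\lambda_j,\lambda_j)$ is then regular: its preprojective summands would force preinjective quotients elsewhere, contradicting the vanishing of defect. This is the Kronecker fact that regular modules form an exact abelian subcategory closed under extensions, together with defect considerations.

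\textbf{Main obstacle.} The delicate step is Step 2 for the outer factors. One must show that a submodule of $M_x$ with dimension vector $\alpha^I_{s'_t}(s'_t,s'_t-1)$ is forced to be $I_{s'_t}^{\oplus\alpha^I_{s'_t}}$, and not, for instance, a mixture like $I_{s''}\oplus(\text{regular})$ or $I_{s''}\oplus P$ of the same dimension. I would handle this with a lexicographic defect argument on the smallest index. A submodule of $P_\alpha\oplus H\oplus I_\alpha$ can only contain preinjectives $I_s$ occurring in $I_\alpha$ with $s\ge$ (something), and the choice of ordering $s'_{i'}\ge s'_i$ makes $s'_t$ the extremal index. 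The dimension vector then pins down the multiplicity.

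Once the equality of varieties is established, the shift and twist conventions on both sides match factorwise, since the $\diamond$ on the right is the same functor applied to the restricted constant sheaves. This gives the proposition.
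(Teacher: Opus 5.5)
Your proposal is correct and follows essentially the paper's route, made more explicit. You reduce by base change to showing that over $X(\alpha,m)$ the flag variety is exactly the locus whose subquotients lie in $\cO_{P_{s_i}^{\oplus\alpha^P_{s_i}}}$, $X(0,\lambda_j)$ and $\cO_{I_{s'_l}^{\oplus\alpha^I_{s'_l}}}$, with the Ext-vanishings of Lemma~\ref{le} giving support in $X(\alpha,m)$; you then force the subquotients via the Hom-vanishings and a defect/dimension count, which you do directly where the paper cites Lemma~\ref{stra}.

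Two small corrections. First, a submodule $W$ of dimension $\alpha^I_{s'_t}(s'_t,s'_t-1)$ with a preprojective or regular summand would need a preinjective $I_u$ with $u<s'_t$. Such a summand is excluded because $\Hom(I_u,M_x)=0$ for $u<s'_t$; in one sentence you have this inequality reversed. Second, the description of $M_x/W$ does not come from the splitting in Step~1. It holds because $\Hom(I_{s'_t},P_\alpha\oplus H)=0$ and $\Hom(I_{s'_t},I_{s''})=0$ for $s''>s'_t$ force the image of $W\cong I_{s'_t}^{\oplus\alpha^I_{s'_t}}$ into the $I_{s'_t}$-isotypic summand of $M_x$, hence onto it by dimension.
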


\begin{proof}
Since $\cO_{P^{\oplus \alpha_{s_k}^P}_{s_k}}$ is an open dense subvariety of $\bfE_{\bfV_{s_k}}$ and $\cO_{I^{\oplus \alpha_{s'_k}^I}_{s'_k}}$ is an open dense subvariety of $\bfE_{\bfV_{s'_k}}$, Lemma~\ref{stra} implies that any
\[
x\in \bfE_{\bfV_{s_k}}\setminus \cO_{P^{\oplus \alpha_{s_k}^P}_{s_k}},
\quad M_x\ \text{has a direct summand }P_{\tilde{s_k}}\text{ with }\tilde{s_k}<s_k,
\]
and any
\[
x\in \bfE_{\bfV_{s'_k}}\setminus \cO_{I^{\oplus \alpha_{s'_k}^I}_{s'_k}},
\quad M_x\ \text{has a direct summand }I_{\tilde{s'_k}}\text{ with }\tilde{s'_k}<s'_k.
\]

By Lemma~\ref{le} and induction, we obtain
\[
\begin{aligned}
&j_{\alpha,m}^*\Bigl(
\overline{\bbQ_l}|_{\bfE_{\bfV_{s_1}}}\diamond \cdots \diamond \overline{\bbQ_l}|_{\bfE_{\bfV_{s_r}}}
\diamond \overline{\bbQ_l}|_{\bfE_{\bfV_{\lambda_1}}}\diamond \overline{\bbQ_l}|_{\bfE_{\bfV_{\lambda_2}}}\cdots \diamond \overline{\bbQ_l}|_{\bfE_{\bfV_{\lambda_k}}}
\diamond \overline{\bbQ_l}|_{\bfE_{\bfV'_{s'_1}}}\diamond \cdots \diamond \overline{\bbQ_l}|_{\bfE_{\bfV'_{s'_t}}}
\Bigr)\\
={}&j_{\alpha,m}^*\Bigl(
\overline{\bbQ_l}|_{\cO_{P^{\oplus \alpha_{s_1}^P}_{s_1}}}\diamond \cdots \diamond \overline{\bbQ_l}|_{\cO_{P^{\oplus \alpha_{s_r}^P}_{s_r}}}
\diamond \overline{\bbQ_l}|_{\bfE_{\bfV_{\lambda_1}}}\diamond \overline{\bbQ_l}|_{\bfE_{\bfV_{\lambda_2}}}\cdots \diamond \overline{\bbQ_l}|_{\bfE_{\bfV_{\lambda_k}}}
\diamond \overline{\bbQ_l}|_{\cO_{I^{\oplus \alpha_{{s'}_1}^I}_{{s'}_1}}}\diamond \cdots \diamond \overline{\bbQ_l}|_{\cO_{I^{\oplus \alpha_{{s'}_t}^I}_{{s'}_t}}}
\Bigr).
\end{aligned}
\]

Moreover, since $\overline{X(0,m)}=\bfE_{\bfV_m}$, for any point $x\in \bfE_{\bfV_m}\setminus X(0,m),$ $M_x$ has a preprojective and a preinjective direct summand. By a direct dimension calculation and Lemma~\ref{le}, if $M$ is a regular representation and $V$ is a subrepresentation (or a quotient) of $M$ with $\dim V=(u,u)$, then $V$ is regular.

Therefore,
\[
\begin{aligned}
&j_{\alpha,m}^*\Bigl(
\overline{\bbQ_l}|_{\cO_{P^{\oplus \alpha_{s_1}^P}_{s_1}}}\diamond \cdots \diamond \overline{\bbQ_l}|_{\cO_{P^{\oplus \alpha_{s_r}^P}_{s_r}}}
\diamond \overline{\bbQ_l}|_{\bfE_{\bfV_{\lambda_1}}}\diamond \overline{\bbQ_l}|_{\bfE_{\bfV_{\lambda_2}}}\cdots \diamond \overline{\bbQ_l}|_{\bfE_{\bfV_{\lambda_k}}}
\diamond \overline{\bbQ_l}|_{\cO_{I^{\oplus \alpha_{{s'}_1}^I}_{{s'}_1}}}\diamond \cdots \diamond \overline{\bbQ_l}|_{\cO_{I^{\oplus \alpha_{{s'}_t}^I}_{{s'}_t}}}
\Bigr)\\
={}&\Bigl(
\overline{\bbQ_l}|_{\cO_{P^{\oplus \alpha_{s_1}^P}_{s_1}}}\diamond \cdots \diamond \overline{\bbQ_l}|_{\cO_{P^{\oplus \alpha_{s_r}^P}_{s_r}}}
\diamond \overline{\bbQ_l}|_{X(0,\lambda_1)}\diamond \overline{\bbQ_l}|_{X(0,\lambda_2)}\cdots \diamond \overline{\bbQ_l}|_{X(0,\lambda_k)}
\diamond \overline{\bbQ_l}|_{\cO_{I^{\oplus \alpha_{{s'}_1}^I}_{{s'}_1}}}\diamond \cdots \diamond \overline{\bbQ_l}|_{\cO_{I^{\oplus \alpha_{{s'}_t}^I}_{{s'}_t}}}
\Bigr).
\end{aligned}
\]
\end{proof}

Let $\underline{\lambda}=(\lambda_1,\cdots,\lambda_n)$, with $\sum_{i=1}^n\lambda_i=m.$
Define $W_{\alpha,\underline{\lambda}}$ to be the variety
\[
\left\{(x,f)\ \middle|\ 
\begin{aligned}
&x\in X(\alpha,m),\\
&f=(0=V_0^0\subset V_0^1\cdots\subset V_0^s=V_0\subset V_1\subset \cdots \subset V_n=V_n^{t+1}\subset V_n^t\subset\cdots\subset V_n^1 \subset V_{n}^0=\bfV)\\&\text{ is a filtration of }\bbN I\text{-graded spaces},\\
&x(V_i)\subset V_i,\ \dim V_i/V_{i-1}=(\lambda_i,\lambda_i), x(V_0^j)\subset V_0^j,x(V_n^j)\subset V_n^j,\\
&M_{x|_{V_i/V_{i-1}}}\text{ is a direct sum of indecomposable regular modules},\ i=1,\ldots,n,\\
&M_{x|_{V_0^{i+1}/V_0^i}}\cong I^{\oplus \alpha_{{s'}_i}^I}_{s'_i},\qquad M_{x|_{V_{n}^{i-1}/V^i_n}}\cong P^{\oplus \alpha_{s_i}^P}_{s_i}
\end{aligned}\right\}.
\]
We also denote by $p_{\alpha,\underline{\lambda}}:W_{\alpha,\underline{\lambda}}\to X(\alpha,m),$ the restriction of the flag variety $\widetilde{\mf_{\underline{\nu}}}\rightarrow \overline{X(\alpha)}$ corresponding to $\ml_{\alpha,\underline{\lambda}}$ in Equation~\ref{alpha} on $X(\alpha,m).$ This map is proper. For $W_{\alpha,\underline{\lambda}}^{rss}={p_{\alpha,\underline{\lambda}}}^{-1}X(\alpha),$ $p_{\alpha,\underline{\lambda}}^{rss}$ denote the restriction of $p_{\alpha,\underline{\lambda}}$ on $W_{\alpha,\underline{\lambda}}^{rss}.$ It is a covering map. Then $${p_{\alpha,\underline{\lambda}}^{rss}}_!(\overline{\bbQ_l})=\oplus_{\chi\in X(S_m)} L_{\chi}^{\oplus K_{\underline{\lambda},\chi}}.$$
\begin{lemma}\label{lem24}
 If we denote $X(\alpha)\xrightarrow{j_{\alpha}^0} X(\alpha,m),$ and $K_{\underline{\lambda},\chi}$ is a Kostka number, \begin{equation}\label{22}
 {p_{\alpha,\underline{\lambda}}}_!(\overline{\bbQ_l})[\dim X(\alpha)](\frac{\dim(X(\alpha))}{2})=\oplus_{\chi\in X(S_m)} {j_{\alpha}^0}_{!*}L_{\chi}[\dim X(\alpha)](\frac{\dim X(\alpha)}{2})^{\oplus K_{\underline{\lambda},\chi}}.
\end{equation}Furthermore, we have
 $j_{\alpha,m}^*IC(X(\alpha),L_{\chi})\cong {j_{\alpha}^0}_{!*}L_{\chi}[\dim X(\alpha)](\frac{\dim X(\alpha)}{2}).$
\end{lemma}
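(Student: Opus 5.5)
Both assertions come from the decomposition theorem combined with a restriction argument. Set $d=\dim X(\alpha)$.

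\emph{First assertion.} The map $p_{\alpha,\underline{\lambda}}:W_{\alpha,\underline{\lambda}}\to X(\alpha,m)$ is proper. We first check that $W_{\alpha,\underline{\lambda}}$ is smooth of pure dimension $d$. By Proposition~\ref{suan} it is the restriction over the open set $X(\alpha,m)$ of the flag variety $\widetilde{\mathcal{F}_{\underline{\nu}}}$ defining $\mathcal{L}_{\alpha,\underline{\lambda}}$. More precisely, it is the open piece where the subquotients lie in the open orbits $\cO_{P_{s_i}^{\oplus}}$, $\cO_{I_{s'_j}^{\oplus}}$ and in $X(0,\lambda_i)$. The induction diagram (\ref{eq111}) shows that $W_{\alpha,\underline{\lambda}}$ is an iterated fibration, via the smooth maps $p_1,p_2$, over the product of the smooth varieties $\cO_{P_{s_i}^{\oplus}}$, $X(0,\lambda_i)$ and $\cO_{I_{s'_j}^{\oplus}}$. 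Hence it is smooth and irreducible. Since $p_{\alpha,\underline{\lambda}}$ is finite over the dense open $X(\alpha)$, $\dim W_{\alpha,\underline{\lambda}}=d$.

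By the decomposition theorem \cite{BBD}, ${p_{\alpha,\underline{\lambda}}}_!\overline{\bbQ_l}[d](\frac d2)$ is a direct sum of shifted simple perverse sheaves. We show that it is perverse and has no summand supported off $X(\alpha)$; for this it suffices that $p_{\alpha,\underline{\lambda}}$ is \emph{small}. Equivalently, for each stratum $S\subset X(\alpha,m)\setminus X(\alpha)$ with fibre dimension $e$, we need $2e<\operatorname{codim}_{X(\alpha,m)} S$.

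The variety $X(\alpha,m)$ is stratified by the isomorphism type of the regular part $H_r$. By Lemma~\ref{le}, $\Hom$ and $\Ext$ between the preprojective, regular and preinjective parts vanish in the needed directions. So the filtration components $V_0^j$ and $V_n^j$ are uniquely determined by $x$: they are the unique submodules isomorphic to the given preinjective and preprojective pieces. Therefore the fibre of $p_{\alpha,\underline{\lambda}}$ over $x$ is exactly the variety of flags of $H_r$ with regular subquotients of dimensions $(\lambda_i,\lambda_i)$. This reduces smallness to the case $\alpha=0$: the map $W_{0,\underline{\lambda}}\to X(0,m)$, whose codimensions agree stratum by stratum up to the smooth factor. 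That case is the classical Kronecker/tube computation, cf.\ \cite{Lusztig1992}, \cite{MCGERTY2005411}. Regular modules of dimension $(m,m)$ form a family over $\mathbb{P}^1$ of tubes, each equivalent to nilpotent representations of the Jordan quiver. There the map is the Grothendieck--Springer-type partial resolution, which is small by Lusztig's analysis of the Springer map for $\mathfrak{gl}_m$ restricted to the regular part.

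\emph{Main obstacle.} The smallness estimate is the main difficulty. I would prove it by combining the dimension $\dim\cO$ of a nilpotent orbit stratum, plus the parameters of the distinct points of $\mathbb{P}^1$, with the known Springer fibre dimensions $\dim\mathcal{B}_\mu\le n(\mu)$. The $\mathbb{P}^1$ parameters give exactly the extra codimension needed.

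Given smallness, ${p_{\alpha,\underline{\lambda}}}_!\overline{\bbQ_l}[d](\frac d2)\cong {j^0_\alpha}_{!*}\bigl({p^{rss}_{\alpha,\underline{\lambda}}}_!\overline{\bbQ_l}[d](\frac d2)\bigr)$. Inserting the decomposition $\oplus_\chi L_\chi^{\oplus K_{\underline{\lambda},\chi}}$ on $X(\alpha)$ gives (\ref{22}).

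\emph{Second assertion.} The inclusion $j_{\alpha,m}$ factors as the open embedding $X(\alpha,m)\hookrightarrow\overline{X(\alpha)}$ followed by a closed embedding. Intermediate extension commutes with restriction to open subsets, since the defining support and cosupport conditions are local. Hence
\[
j_{\alpha,m}^*\,{j_\alpha}_{!*}L_\chi[d]\bigl(\tfrac d2\bigr)\cong {j^0_\alpha}_{!*}L_\chi[d]\bigl(\tfrac d2\bigr),
\]
as claimed.
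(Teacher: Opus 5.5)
Your proposal is correct and follows essentially the same route as the paper: reduce the fibre over $x\in X(\alpha,m)$ to flags of the regular part, and prove smallness of $p_{\alpha,\underline{\lambda}}$ by combining the Springer-fibre (semismallness) bound with the extra codimension coming from the distinct $\mathbb{P}^1$-points of the tubes. Concretely, the fibre dimension $e$ and the stratum codimension $c$ satisfy $2e=c-(m-t)$, which is strict unless $t=m$, and the partial-flag case follows from the surjection from complete flags. You then identify ${p_{\alpha,\underline{\lambda}}}_!\overline{\bbQ_l}[d](\frac d2)$ with the intermediate extension, and obtain the second claim from compatibility of $j_{!*}$ with restriction to the open subset $X(\alpha,m)\subset\overline{X(\alpha)}$.

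The only point to tighten concerns the strata. They must be the finitely many combinatorial-type families $X_{P,(\underline{\mu_i})}$ (tube multiplicities and Jordan types, with tube positions varying), not individual isomorphism types of $H_r$. This is what your $\mathbb{P}^1$-parameter count implicitly uses.
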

\begin{proof}
First, let $P=(l_1,\cdots,l_t),$ $\sum l_i=m,$ and $\underline{\mu_i}=(\mu_i^1,\cdots,\mu_i^{u_i}),$ such that $\sum\mu^j_i=l_i,$ $\mu_i^1\geq \cdots\geq \mu_i^{u_i}.$ Denote $\bfV_{l_i}$ as the graded space of dimension $(l_i,l_i).$ Define the subset $$X_{P,(\underline{\mu_i}_{i=1,\cdots,t})}=\left\{x\in X(\alpha,m)\middle|\begin{aligned}
 &M_x\cong P_{\alpha}\oplus I_{\alpha}\oplus H_r,H_r\cong H_{r_1}\oplus\cdots\oplus H_{r_t}, H_{r_i}\cong \oplus_j H_{r_i^j},\\& H_{r_i^j}\text{ is indecomposable of dimension }(\mu_i^j,\mu_i^j),\\
 & H_{r_i}\text{ and }H_{r_j}\text{ are in different tubes for }i\not=j.
\end{aligned}\right\}.$$ It is a locally closed subvariety of $X(\alpha,m)$ \cite{wreo6318}.
In this way, we denote $X(\alpha)$ as $X_{P_0,(\underline{{\mu_0}_i})}.$

It is straightforward to see that $X(\alpha,m)=\bigsqcup_{(P,(\underline{\mu_i}_{i=1,\cdots,t}))\in \Theta} X_{P,(\underline{\mu_i}_{i=1,\cdots,t})},$ which is a finite union with index set $\Theta$. Moreover, $\dim X_{P,(\underline{\mu_i}_{i=1,\cdots,t})}=\dim \mo_{x}+t,$ $x\in X_{P,(\underline{\mu_i}_{i=1,\cdots,t})}.$ We also have that $\dim W_{\alpha,\underline{\lambda}}=\dim X(\alpha)=m+\dim \mo_{x_0},$ for any $x_0\in X(\alpha).$

For $x\in X_{P,(\underline{\mu_i}_{i=1,\cdots,t})},\ x_0\in X(\alpha)$ $$\dim \mo_{x_0}-\dim \mo_{x}=\dim \Hom_{kQ}(M_x,M_x)-\dim \Hom_{kQ}(M_{x_0},M_{x_0})=\sum_{i=1}^t\dim \Hom_{kQ}(H_{r_i},H_{r_i})-m.$$

 We consider the case $\underline{\lambda}=(1,\cdots,1).$ For $x\in X_{P,(\underline{\mu_i}_{i=1,\cdots,t})},$ $\dim p_{\alpha,\underline{\lambda}}^{-1}(x)=\sum_{i=1}^t d_i,$ where $d_i$ is the dimension of Springer fiber of $x_i,$ where $M_{x_i}\cong \oplus_{j=1}^{u_i} H_{{r'}_i^j},$ and $H_{{r'}_i^j}$ is an indecomposable nilpotent representation of dimension $\mu_i^j$ of Jordan quiver $Q_J$.

 By \cite[Theorem 4.6]{Steinberg1976}, Springer resolution is semismall, thus $d_i\leq \frac{1}{2}(l_i(l_i-1)-\dim \mo_{x_i}).$ 

 Then $$\begin{aligned}
 \dim p_{\alpha,\underline{\lambda}}^{-1}(x)=\sum_{i=1}^t d_i&\leq \sum_{i=1}^t \frac{1}{2}(l_i(l_i-1)-\dim \mo_{x_i})\\
 &=\sum_{i=1}^t \frac{1}{2}(l_i(l_i-1)-\dim \bfG l_{l_i}+\dim \Hom_{kQ_J}(M_{x_i},M_{x_i}))\\
 &=\sum_{i=1}^t \frac{1}{2}(l_i(l_i-1)-\dim \bfG l_{l_i}+\dim \Hom_{kQ}(M_{r_i},M_{r_i}))\\
 &=\frac{1}{2}(\dim W_{\alpha,\underline{\lambda}}-\dim X_{P,(\underline{\mu_i}_{i=1,\cdots,t})}+t)+\sum_{i=1}^t \frac{1}{2}(l_i(l_i-1)-\dim \bfG l_{l_i})\\
 &=\frac{1}{2}(\dim W_{\alpha,\underline{\lambda}}-\dim X_{P,(\underline{\mu_i}_{i=1,\cdots,t})}+t-m).
 \end{aligned}$$

 Thus when $t\not=m,$ for $x\in X_{P,(\underline{\mu_i}_{i=1,\cdots,t})},$ $ \dim p_{\alpha,\underline{\lambda}}^{-1}(x)<\frac{1}{2}(\dim W_{\alpha,\underline{\lambda}}-\dim X_{P,(\underline{\mu_i}_{i=1,\cdots,t})})$ and when $t=m,$ $X_{P,(\underline{\mu_i}_{i=1,\cdots,t})}=X(\alpha),$ for $x\in X(\alpha),$ $p_{\alpha,\underline{\lambda}}^{-1}(x)$ is a finite set. 
 
 For other $\underline{\lambda},$ we have that $p^{-1}_{\alpha,(1,\cdots,1)}(x)\rightarrow p^{-1}_{\alpha,\underline{\lambda}}(x)$ is a surjective map. Thus when $t\not=m,$ for $x\in X_{P,(\underline{\mu_i}_{i=1,\cdots,t})},$ $ \dim p_{\alpha,\underline{\lambda}}^{-1}(x)<\frac{1}{2}(\dim W_{\alpha,\underline{\lambda}}-\dim X_{P,(\underline{\mu_i}_{i=1,\cdots,t})})$ and when $t=m,$ $X_{P,(\underline{\mu_i}_{i=1,\cdots,t})})=X(\alpha),$ for $x\in X(\alpha),$ $p_{\alpha,\underline{\lambda}}^{-1}(x)$ is a finite set. 
 
 Since $W_{\alpha,\underline{\lambda}}$ is open in the flag variety, it is smooth and irreducible. Denote $L=\overline{\bbQ_l}[\dim X(\alpha)](\frac{\dim X(\alpha)}{2}).$
 
 We now consider $Z=X(\alpha,m)-X(\alpha)=\bigsqcup_{(P,(\underline{\mu_i}_{i=1,\cdots,t}))\in \Theta-(P_0,(\underline{{\mu_0}_i}))} X_{P,(\underline{\mu_i}_{i=1,\cdots,t})},$ and $i:Z\rightarrow X(\alpha,m).$ We will prove that $i^*(p_{\alpha,\underline{\lambda}})!L\in {}^p\cD^b_{\bfG_{\bfV},c}(Z,\overline{\bbQ_l})^{\leq -1}$ and $i^!(p_{\alpha,\underline{\lambda}})!L\in {}^p\cD^b_{\bfG_{\bfV},c}(Z,\overline{\bbQ_l})^{\geq 1}.$

 As in the proof in \cite[Lemma 3.8.3]{Pramod-2021}, $$\dim \operatorname{supp}L=\dim X(\alpha).$$ Since for any $y\in X_{P,(\underline{\mu_i}_{i=1,\cdots,t})},$ $$\operatorname{H}^j({p_{\alpha,\underline{\lambda}}}_!L)\cong \mathbf{H}^j_c({p_{\alpha,\underline{\lambda}}}^{-1}(y),L|_{{p_{\alpha,\underline{\lambda}}}^{-1}(y)}),$$ and $\dim {p_{\alpha,\underline{\lambda}}}^{-1}(y)<\frac{1}{2}(\dim X(\alpha)-\dim X_{P,(\underline{\mu_i}_{i=1,\cdots,t})}).$ By \cite[Theorem 2.7.4]{Pramod-2021}, $\operatorname{H}^j({p_{\alpha,\underline{\lambda}}}_!L)=0$ unless $j< -\dim X_{P,(\underline{\mu_i}_{i=1,\cdots,t})}.$ Thus $\operatorname{supp} \operatorname{H}^j(i^*{p_{\alpha,\underline{\lambda}}}_!L)\subset\cup X_{P,(\underline{\mu_i}_{i=1,\cdots,t})},$ where $(P,(\underline{\mu_i}_{i=1,\cdots,t}))\in \Theta-(P_0,(\underline{{\mu_0}_i}))$ and $j<- \dim X_{P,(\underline{\mu_i}_{i=1,\cdots,t})}.$ Then $\dim \operatorname{supp}\operatorname{H}^j(i^*{p_{\alpha,\underline{\lambda}}}_!L)<-j,$ hence $$i^*{p_{\alpha,\underline{\lambda}}}_!L\in {}^p\cD^b_{\bfG_{\bfV},c}(Z,\overline{\bbQ_l})^{\leq -1}.$$ 

 To prove $i^!(p_{\alpha,\underline{\lambda}})!L\in {}^p\cD^b_{\bfG_{\bfV},c}(Z,\overline{\bbQ_l})^{\geq 1}$ is equivalent to prove $\mathbf{D}(i^!(p_{\alpha,\underline{\lambda}})!L)\in \mathbf{D}({}^p\cD^b_{\bfG_{\bfV},c}(Z,\overline{\bbQ_l})^{\geq 1}).$ Since $\overline{\bbQ_l}$ is a field, by \cite[Lemma 3.1.11]{Pramod-2021}, $\mathbf{D}({}^p\cD^b_{\bfG_{\bfV},c}(Z,\overline{\bbQ_l})^{\geq 1})={}^p\cD^b_{\bfG_{\bfV},c}(Z,\overline{\bbQ_l})^{\leq -1}.$ And $\mathbf{D}(i^!(p_{\alpha,\underline{\lambda}})!L)=i^*(p_{\alpha,\underline{\lambda}})!L$ because $p_{\alpha,\underline{\lambda}}$ is proper and $\mathbf{D}L=L.$ Then we prove that $$i^!(p_{\alpha,\underline{\lambda}})!L\in {}^p\cD^b_{\bfG_{\bfV},c}(Z,\overline{\bbQ_l})^{\geq 1}.$$

 By the same argument, we also have that ${p_{\alpha,\underline{\lambda}}}_!L\in \operatorname{Perv}(X(\alpha,m)).$ 

We calculate that $j_0^*{p_{\alpha,\underline{\lambda}}}_!L={p_{\alpha,\underline{\lambda}}^{rss}}_!(\overline{\bbQ_l}[\dim X(\alpha)](\frac{\dim X(\alpha)}{2}))=\oplus_{\chi\in X(S_m)}L_{\chi}[\dim X(\alpha)](\frac{\dim X(\alpha)}{2})^{\oplus K_{\underline{\lambda},\chi}}.$ Then by \cite[Lemma 3.3.4]{Pramod-2021}\cite{laszlo2009}[Lemma 6.1], $${p_{\alpha,\underline{\lambda}}}_!L=\oplus_{\chi\in X(S_m)} {j_{\alpha}^0}_{!*}L_{\chi}[\dim X(\alpha)](\frac{\dim X(\alpha)}{2})^{\oplus K_{\underline{\lambda},\chi}}.$$ 

Again by \cite{Pramod-2021}[Lemma 3.3.4]\cite{laszlo2009}[Lemma 6.1], it is easy to see that $$j_{\alpha,m}^*IC(X(\alpha),L_{\chi})\cong {j_{\alpha}^0}_{!*}L_{\chi}[\dim X(\alpha)](\frac{\dim X(\alpha)}{2}).$$
\end{proof}
\begin{remark}\label{rem25}
 By Proposition~\ref{suan} and Lemma~\ref{lem24}, for equation~\ref{eq121}, $$\begin{aligned}
 j_{\alpha,m}^*(\oplus_{\chi\in X(S_m)}j_{\alpha,m}^*IC(X(\alpha),L_{\chi})^{\oplus K_{\underline{\lambda},\chi}}\oplus L')&=j_{\alpha,m}^*\mathcal{L}_{\alpha,\underline{\lambda}}={p_{\alpha,\underline{\lambda}}}_!(\overline{\bbQ_l}[\dim X(\alpha)](\frac{\dim X(\alpha)}{2}))\\&=\oplus_{\chi\in X(S_m)} {j_{\alpha}^0}_{!*}L_{\chi}[\dim X(\alpha)](\frac{\dim X(\alpha)}{2})^{\oplus K_{\underline{\lambda},\chi}}\\&=\oplus_{\chi\in X(S_m)}j_{\alpha,m}^*IC(X(\alpha),L_{\chi})^{\oplus K_{\underline{\lambda},\chi}},
 \end{aligned}$$hence $\supp L'\cap X(\alpha,m)=\emptyset.$
\end{remark}
We denote $j_{s_i}:\mo_{P^{\oplus \alpha_{s_i}^P}_{s_i}}\rightarrow \bfE_{\bfV_{s_i}}$, $j_{s'_i}:\mo_{I^{\oplus \alpha_{s'_i}^I}_{s'_i}}\rightarrow \bfE_{\bfV_{s'_i}}$ and $j_{\lambda_i}: X(0,\lambda_i)\rightarrow \bfE_{\bfV_{\lambda_i}}.$
\begin{proposition}\label{propp}
 With the notation above, we have the following equality:
\[
\begin{aligned}
&\Bigl(
{j_{s_1}}_!j_{s_1}^*\overline{\bbQ_l}|_{\bfE_{\bfV_{s_1}}}\diamond \cdots \diamond {j_{s_r}}_!j_{s_r}^*\overline{\bbQ_l}|_{\bfE_{\bfV_{s_r}}}
\diamond {j_{m}}_!j_{m}^*IC(X(0,m),L_{\chi})
\diamond {j_{s'_1}}_!j_{s'_1}^*\overline{\bbQ_l}|_{\bfE_{\bfV'_{s'_1}}}\diamond \cdots \diamond {j_{s'_t}}_!j_{s'_t}^*\overline{\bbQ_l}|_{\bfE_{\bfV'_{s'_t}}}
\Bigr)\\
={}&{j_{\alpha,m}}_!j_{\alpha,m}^*\Bigl(
\overline{\bbQ_l}|_{\cO_{P^{\oplus \alpha_{s_1}^P}_{s_1}}}\diamond \cdots \diamond \overline{\bbQ_l}|_{\cO_{P^{\oplus \alpha_{s_r}^P}_{s_r}}}
\diamond IC(X(0,m),L_{\chi})
\diamond \overline{\bbQ_l}|_{\cO_{I^{\oplus \alpha_{{s'}_1}^I}_{{s'}_1}}}\diamond \cdots \diamond \overline{\bbQ_l}|_{\cO_{I^{\oplus \alpha_{{s'}_t}^I}_{{s'}_t}}}
\Bigr).
\end{aligned}
\]
\end{proposition}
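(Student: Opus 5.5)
We compute the left-hand side directly from the definition of $\diamond$ as ${p_3}_!{p_2}_\flat p_1^*$ and identify its support and stalks with those of the right-hand side. The plan has three steps.

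\textbf{Step 1 (one iterated induction diagram).} By associativity of Lusztig's induction, the left-hand side is computed by a single diagram. The middle space parametrizes pairs $(x,f)$ with $x\in\bfE_{\bfV}$ and $f$ an $x$-stable filtration whose subquotients have dimensions $\alpha_{s_i}^P(s_i-1,s_i)$, $(m,m)$ and $\alpha_{s'_j}^I(s'_j,s'_j-1)$, in the order of Equation~\ref{alpha}. The complex pulled back along $p_1$ is the exterior product of the factors. Each factor ${j_{s_i}}_!j_{s_i}^*\overline{\bbQ_l}$ or ${j_{s'_j}}_!j_{s'_j}^*\overline{\bbQ_l}$ is extension by zero from an orbit, and ${j_m}_!j_m^*IC(X(0,m),L_\chi)$ is extension by zero from $X(0,m)$. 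Since $p_1^*$ and ${p_2}_\flat$ commute with extension by zero along the open/locally closed pieces, the left-hand side equals ${p_3}_!$ of an extension by zero from the locally closed subvariety $W$ of flags whose subquotients are isomorphic to $P_{s_i}^{\oplus\alpha^P_{s_i}}$, are regular of dimension $(m,m)$, and are isomorphic to $I_{s'_j}^{\oplus\alpha^I_{s'_j}}$, respectively.

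\textbf{Step 2 (image of $W$).} We show that $p_3(W)\subset X(\alpha,m)$, and that over $X(\alpha,m)$ the flag is forced. For the first claim, Lemma~\ref{le} shows that $\Ext^1$ vanishes in the relevant directions: each extension $0\to\text{sub}\to M\to\text{quot}\to0$ with quotient preprojective of smaller index, or with sub preinjective, or of the form regular by preinjective or preprojective by regular, splits. Hence $M_x\cong P_\alpha\oplus H_r\oplus I_\alpha$ with $H_r$ regular, that is, $x\in X(\alpha,m)$.

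For the second claim, fix $x\in X(\alpha,m)$. The Hom-vanishings in Lemma~\ref{le} force the submodules in the filtration to be the canonical pieces. The largest subrepresentation with only preinjective summands of index $\le s'_1$ is unique, and similarly for the other preinjective steps. The regular part is then determined, and the preprojective quotients are determined in the same way. This argument also uses the fact, shown in the proof of Proposition~\ref{suan}, that a subrepresentation or quotient of dimension $(u,u)$ of a regular module is regular. Consequently $p_3|_W:W\to X(\alpha,m)$ is an isomorphism onto $X(\alpha,m)$. Strictly, it is a fibration whose fiber is the group of framing isomorphisms $\rho$, which is absorbed by ${p_2}_\flat$.

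\textbf{Step 3 (comparison with the right-hand side).} The same computation applies to the right-hand side. The complex inside $j_{\alpha,m}^*$ is, by the argument of Proposition~\ref{suan} with $IC(X(0,m),L_\chi)$ in place of the constant sheaf, ${p_3}_!$ of a complex on the full flag variety. Its restriction to $X(\alpha,m)$ only sees flags whose subquotients lie in the open orbits, respectively in $X(0,m)$, by the closure description in Lemma~\ref{stra}. Hence $j_{\alpha,m}^*$ of the right-hand side's inner complex agrees with the restriction of the Step~1 complex. Both sides therefore equal ${j_{\alpha,m}}_!$ of the same complex, namely the transport of $\overline{\bbQ_l}\boxtimes IC(X(0,m),L_\chi)|_{X(0,m)}\boxtimes\overline{\bbQ_l}$ through the isomorphism of Step~2. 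The shifts and Tate twists match because the same $d_1-d_2$ appear on both sides.

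\textbf{Main obstacle.} The hardest step is Step~2, in particular the uniqueness of the filtration when several regular tubes and several preprojective indices occur. The plan is to proceed by induction on the number of factors, peeling off one factor at a time. At each stage we would use the Hom-vanishing $\Hom(I_{s'},H_r\oplus P_s)=0$ together with the ordering $s'_1\le\cdots$ to show that the first submodule is the trace of $I_{s'_1}$ in $M_x$. We would also check that base change along $p_3$ restricted to $W$ is compatible with the $!$-extensions, which holds because $W$ is closed in $p_3^{-1}(X(\alpha,m))$.
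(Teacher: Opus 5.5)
Your proposal is correct and follows essentially the paper's route. The paper also base-changes the induction diagram along $j_{\alpha,m}$ using Cartesian squares. It relies on the same two facts: over $X(\alpha,m)$ every $x$-stable subspace of the relevant dimension has the prescribed sub and quotient (Hom-vanishing plus dimension count), and the relevant extensions split (Ext-vanishing, needed for the left square to be Cartesian). The paper organizes this as an induction peeling off one factor at a time, where you use a single iterated diagram. Your claim that $p_3|_W$ is an isomorphism goes beyond what you justify, since you only show bijectivity, and it is not needed: base change plus the set-theoretic equality of the relevant flag loci already gives both sides as ${j_{\alpha,m}}_!$ of the same complex, exactly as in the paper.
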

\begin{proof}
 By induction, it suffices to prove that \[
\begin{aligned}
&\Bigl(
{j_{s_1}}_!j_{s_1}^*\overline{\bbQ_l}|_{\bfE_{\bfV_{s_1}}}\diamond {j_{\alpha-\alpha_{s_1}^PP_{s_1},m}}_!j_{\alpha-\alpha_{s_1}^PP_{s_1},m}^*\Bigl(\overline{\bbQ_l}|_{\bfE_{\bfV_{s_2}}}\diamond \overline{\bbQ_l}|_{\bfE_{\bfV_{s_r}}}
\diamond IC(X(0,m),L_{\chi})
\diamond \overline{\bbQ_l}|_{\bfE_{\bfV'_{s'_1}}}\diamond \cdots \diamond \overline{\bbQ_l}|_{\bfE_{\bfV'_{s'_t}}}\Bigr)
\Bigr)\\
={}&{j_{\alpha,m}}_!j_{\alpha,m}^*\Bigl(
\overline{\bbQ_l}|_{\cO_{P^{\oplus \alpha_{s_1}^P}_{s_1}}}\diamond \cdots \diamond \overline{\bbQ_l}|_{\cO_{P^{\oplus \alpha_{s_r}^P}_{s_r}}}
\diamond IC(X(0,m),L_{\chi})
\diamond \overline{\bbQ_l}|_{\cO_{I^{\oplus \alpha_{{s'}_1}^I}_{{s'}_1}}}\diamond \cdots \diamond \overline{\bbQ_l}|_{\cO_{I^{\oplus \alpha_{{s'}_t}^I}_{{s'}_t}}}
\Bigr).
\end{aligned}
\]
Since $$\left\{(x,W)|x\in X(\alpha,m),x(W)\subset W,\dim W=(m,m)+\sum_{s\ge 2}\alpha_s^P(s-1,s)+\sum_{s\ge 1}\alpha_s^I(s,s-1),\right\}$$ is the same as $$\left\{(x,W)|x\in X(\alpha,m),x|_{W}\in X(\alpha-\alpha^P_{s_1}P_{s_1},m),x|_{\bfV/W}\cong \alpha^P_{s_1}P_{s_1}\right\}.$$

By \[\begin{tikzcd}
\bfE_{\bfV_{s_1}}\times \bfE_{\bfV'} & \bfE'_{\bfV} \arrow[r, "p_2"] \arrow[l, "p_1"'] & \bfE_{\bfV}'' \arrow[r, "p_3"] & \bfE_{\bfV} \\
{X(\alpha^P_{s_1}P_{s_1})\times X(\alpha-\alpha^P_{s_1}P_{s_1},m)} \arrow[u, "{j_{s_1}\times j_{\alpha-\alpha^P_{s_1}P_{s_1},m}}"] & {p_2^{-1}p_3^{-1}(X(\alpha,m))} \arrow[r] \arrow[l] \arrow[u, "j'"] & {p_3^{-1}(X(\alpha,m))} \arrow[r] \arrow[u, "j''"] & {X(\alpha,m)} \arrow[u, "{j_{\alpha,m}}"]
\end{tikzcd}\]
Since all squares in this diagram are Cartesian, the claim follows. The cases in which the first factor is $IC(X(0,m),L_{\chi})$ or preinjective part, the proof is the same.
\end{proof}

\section{Flag sheaf complexes over $X(\alpha,m)$}

In this section, we give a more explicit description of any flag variety over the locally closed subvariety $X(\alpha,m)\subset \bfE_{\bfV}$. Since $\mathcal{L}_{(i,\cdots,i)}=\oplus \mathcal{L}_{li}[2n](n)^{\oplus s_n}$, we only need to consider the case $\underline{\nu}=(i_1,\cdots,i_n),$ $i_j\in I.$

We study $j_{\alpha,m}^*\mathcal{L}_{\underline{\nu}},$ and give an important property of ${}^p\mathbf{H}^i(j_{\alpha,m}^*\mathcal{L}_{\underline{\nu}}).$
\begin{theorem}\label{th}
For the variety $\widetilde{\mathcal{F}_{\underline{\nu}}}$ and the projective morphism $\pi_{\underline{\nu}}$, each simple constituent of ${}^p\mathbf{H}^j\!\left({j_{\alpha',m'}}^*{\pi_{\underline{\nu}}}_!\overline{\bbQ_l}\right)$ has the form $j_{\alpha',m'}^*IC(X(\alpha'),L_{\chi})$.
\end{theorem}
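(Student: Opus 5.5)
Since $\pi_{\underline{\nu}}$ is proper and $\widetilde{\mathcal{F}_{\underline{\nu}}}$ is smooth, the decomposition theorem \cite{BBD} shows that $\mathcal{L}_{\underline{\nu}}$ is a direct sum of shifted simple $\bfG_{\bfV}$-equivariant perverse sheaves on $\bfE_{\bfV}$. By Lusztig's classification in the Kronecker case \cite{Lusztig1992}, each of these is, up to shift, some $IC(X(\alpha),L_\chi)$. I would prove this classification step by induction on $n$. Write $\underline{\nu}=(i_1,\underline{\nu}')$, so that $\mathcal{L}_{\underline{\nu}}=\mathcal{L}_{i_1}\diamond\mathcal{L}_{\underline{\nu}'}$. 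By induction, $\mathcal{L}_{\underline{\nu}'}$ is a sum of shifts of $IC(X(\beta),L_{\psi})$. It then suffices to show that $\mathcal{L}_{i}\diamond IC(X(\beta),L_\psi)$ decomposes into shifts of complexes of the form $IC(X(\alpha),L_\chi)$.

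For this I would use the fact, visible from \eqref{eq121} and Remark~\ref{rem25}, that every $IC(X(\beta),L_\psi)$ appears as a summand of some $\mathcal{L}_{\beta,\underline{\lambda}}$ modulo sheaves supported on smaller strata. I would also use the fact that the $\mathcal{L}_{\beta,\underline{\lambda}}$, with the Kostka matrix unitriangular, span the same set of $IC$'s. A triangular induction on the order $\le$ of Lemma~\ref{stra} then reduces the claim to showing that every simple summand of every flag complex has support $\overline{X(\alpha)}$ for some $\alpha$, with generic local system among the $L_\chi$. This is the essential content of Lusztig's result \cite[6.14--6.16]{Lusztig1992}, that the simple perverse sheaves appearing in the $\mathcal{L}_{\underline{\nu}}$ are exactly the $IC(X(\alpha),L_\chi)$ (together with the aperiodic ones, which in the Kronecker case are these). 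If I am allowed to cite it, this step is immediate.

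With the decomposition $\mathcal{L}_{\underline{\nu}}\cong\bigoplus IC(X(\alpha),L_\chi)[d]^{\oplus}$ in hand, I apply $j_{\alpha',m'}^*$. I would first argue that this restriction is perverse-exact up to shift on each summand, and then decompose each $j_{\alpha',m'}^*IC(X(\alpha),L_\chi)$. There are three cases.
\begin{enumerate}
\item If $X(\alpha',m')\cap\overline{X(\alpha)}=\emptyset$, the restriction is zero.
\item If $(\alpha,m)=(\alpha',m')$, Lemma~\ref{lem24} gives $j_{\alpha,m}^*IC(X(\alpha),L_\chi)={j^0_\alpha}_{!*}L_\chi[\dim X(\alpha)]$, which is simple perverse on $X(\alpha,m)$, as desired.
\item If $(\alpha',m')<(\alpha,m)$ strictly, the restriction to the smaller stratum is in general not perverse. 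Here I need its perverse cohomologies to be sums of ${j^0_{\alpha'}}_{!*}L_{\chi'}$.
\end{enumerate}

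For the third case I plan to use Proposition~\ref{propp} and Proposition~\ref{suan}. Again by triangularity, $IC(X(\alpha),L_\chi)$ is a $\bbZ$-combination, in the Grothendieck group and in fact as a direct summand, of the complexes $\mathcal{L}_{\alpha,\underline{\lambda}}$. By Proposition~\ref{suan}, the restriction to $X(\alpha',m')$ of any flag complex of the form \eqref{alpha} is computed by a convolution of constant sheaves on the orbits $\cO_{P_s^{\oplus}}$, on $X(0,\lambda_i)$, and on $\cO_{I_s^{\oplus}}$. More generally, I would show that $j_{\alpha',m'}^*\mathcal{L}_{\underline{\nu}}$ equals ${p}_!$ of the constant sheaf on a variety of filtrations of $x\in X(\alpha',m')$. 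The preprojective and preinjective parts of $M_x$ are rigid, so their contributions are trivial fibrations, via the Cartesian squares in the proof of Proposition~\ref{propp}. The remaining fibers are partial flag varieties on the regular part $H_r$ with $\dim (u,u)$ subquotients, which are regular by the argument in Proposition~\ref{suan}. This reduces the problem to the Jordan-quiver/Springer situation on $X(0,m')$.

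Thus $j_{\alpha',m'}^*\mathcal{L}_{\underline{\nu}}$ is, up to a smooth pullback along the fibration to $X(0,m')$ with connected fibers, a direct sum of shifts of Springer-type complexes ${p_{0,\underline{\lambda}}}_!\overline{\bbQ_l}$. By Lemma~\ref{lem24} these are sums of ${j^0}_{!*}L_{\chi'}$. Taking ${}^p\mathbf{H}^j$ then yields only the constituents $j_{\alpha',m'}^*IC(X(\alpha'),L_{\chi'})$, again by Lemma~\ref{lem24}.

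The main obstacle is the geometric reduction step: proving that over $X(\alpha',m')$ the flag variety fibers genuinely factor into a rigid part times a regular-part flag variety. This requires controlling how a flag of type $(i_1,\dots,i_n)$ meets the canonical filtration $P\subset P\oplus H\subset M$ of $M_x$. I would first try to stratify the flag variety by the induced filtrations on the summands $P_{\alpha'}$, $H_r$ and $I_{\alpha'}$, using the Hom/Ext vanishing of Lemma~\ref{le}. Each stratum should then be an affine-space bundle over a product of (flag varieties on the rigid part) $\times$ (regular flags on $H_r$). The pushforward would then be a successive extension of shifted Springer complexes, and semisimplicity of $\mathcal{L}_{\underline{\nu}}$ would turn this extension into a direct sum.
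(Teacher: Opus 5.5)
\textbf{Verdict: there is a genuine gap.} The central geometric reduction, which you yourself flag as the main obstacle, is sketched using results that do not apply on smaller strata and a fibre description that is false.

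\textbf{Steps 1--2 do not reduce the difficulty.} Write $\mathcal{L}_{\underline{\nu}}$, via the decomposition theorem and Lusztig's classification, as a sum of shifts of $IC(X(\alpha),L_\chi)$. The theorem then amounts to saying that every constituent of ${}^p\mathbf{H}^j(j_{\alpha',m'}^*IC(X(\alpha),L_\chi))$ is some $j_{\alpha',m'}^*IC(X(\alpha'),L_{\chi'})$. The paper deduces exactly this \emph{from} Theorem~\ref{th}, and Theorem~\ref{th} in turn implies Lusztig's classification, so the paper's proof never needs to cite it. Your case (3) goes back, through $IC(X(\alpha),L_\chi)\subset\mathcal{L}_{\alpha,(1,\dots,1)}$, to restricting a flag complex to a smaller stratum, i.e.\ to the theorem itself. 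So everything rests on your ``geometric reduction step''.

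\textbf{Why that step fails as sketched.}
\begin{itemize}
\item Propositions~\ref{suan} and~\ref{propp} only describe $\mathcal{L}_{\alpha,\underline{\lambda}}$ on the top stratum $X(\alpha,m)$ of its own support. There, Lemmas~\ref{le} and~\ref{stra} force the subquotients to be $P_s^{\oplus a}$, $I_s^{\oplus a}$ and regular, and this is what makes the squares in Proposition~\ref{propp} Cartesian. Over $X(\alpha',m')$ with $\alpha'\neq\alpha$ the subquotients are degenerations of these, and neither proposition says anything.
\item The fibre of $\pi_{\underline{\nu}}$ over $x\in X(\alpha',m')$ does not factor as (rigid flags)$\times$(regular flags with $(u,u)$-dimensional subquotients). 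A flag of type $(i_1,\dots,i_n)$ has \emph{simple} subquotients, and its members need not respect the decomposition of $M_x$.
\item Stratifying by induced filtrations along the canonical filtration does not give bundles over full composition-series varieties. With $x$ fixed you get at best iterated torsors under $\Hom_{kQ}$-spaces whose dimensions jump. (Note that the canonical filtration is $I_{\alpha'}\subset H_r\oplus I_{\alpha'}\subset M_x$, since $\Hom(I,H\oplus P)=\Hom(H,P)=0$; it is not $P\subset P\oplus H\subset M$.)
\item The factor coming from $H_r$ is a variety of composition series of a regular module, not a Springer-type variety. Its members can be preprojective: for example $P_2\subset H_{r_1}\oplus H_{r_2}$ when $H_{r_1}\not\cong H_{r_2}$ have dimension $(1,1)$. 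So no reduction to the complexes ${p_{0,\underline\lambda}}_!\overline{\bbQ_l}$ follows.
\item Semisimplicity of $\mathcal{L}_{\underline{\nu}}$ does not pass to $j_{\alpha',m'}^*\mathcal{L}_{\underline{\nu}}$ on a non-open stratum. The paper obtains such a splitting only later, from purity (Proposition~\ref{propp1}). For the statement about constituents this is harmless, since Lemma~\ref{ar} suffices.
\end{itemize}

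\textbf{What is missing.} You need a mechanism that produces fibrations with controlled fibres. In the paper, the Springer structure comes from regular extensions of a preprojective subquotient by a preinjective one, not from the regular summand of a fixed $M_x$. The paper never analyses a single fibre. It works globally with the varieties $S_{\hat{\underline\nu}}$, whose flags have subquotients in an indecomposable preprojective orbit, an indecomposable preinjective orbit, or the regular locus. It then repeatedly reorders adjacent out-of-order pairs:
\begin{itemize}
\item A $P_{s'}$ below an $I_s$ is handled by Lemma~\ref{2.5}. It splits into a bundle over the orbit of $P_{s'}\oplus I_s$, which has connected stabilizer, and a regular part. Through the strata $\widetilde{W'_{0,\underline\lambda}}$, the regular part reduces to projective-space bundles $W'_{0,\underline\lambda}\to W_{0,\underline\lambda}$.
\item Regular below preinjective is handled by Lemma~\ref{2.4}.
\item Two preinjectives (or two preprojectives) in the wrong order are handled by Lemma~\ref{buh}.
\end{itemize}
Lemma~\ref{aiq} makes the pushforwards along these projective bundles have constant cohomology sheaves up to Tate twist, and Lemma~\ref{ar} carries constituents through the open--closed triangles. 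The process ends at the ordered varieties $W_{\alpha'',\underline\lambda''}\to X(\alpha'',m'')$. There Lemma~\ref{lem24}, together with $j_{\alpha',m'}^*{j_{\alpha'',m''}}_!=0$ for $(\alpha'',m'')\neq(\alpha',m')$, finishes the proof.

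\textbf{Another possible repair.} Since you are willing to cite Lusztig, you could instead let the off-diagonal part of $x$ vary, i.e.\ use Lusztig's restriction functor along the canonical filtration. By the $\Ext$-vanishing in Lemma~\ref{le}, this exhibits $X(\alpha',m')$ as induced from a vector bundle over $\cO_{I_{\alpha'}}\times X(0,m')\times\cO_{P_{\alpha'}}$, with $X(0,m')$ open in its representation space. Neither this nor the paper's reordering argument appears in your proposal, so the key step is missing.
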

The theorem will be proved in Section \ref{sec}.
By Lemma~\ref{lem24} and \cite[Lemma~3.3.11]{Pramod-2021}, $j_{\alpha,m}^*IC(X(\alpha),L_{\chi})$ is a simple perverse sheaf. 

Moreover, for the same reason, $\mathbf{H}^i(j_{\alpha}^*{\pi_{\underline{\nu}}}_!\overline{\bbQ_l})$ is generated by local systems $L_{\chi},$ $\chi\in X(S_m).$ 
\begin{corollary}
 The closure $\overline{X(\alpha,m)}$ of $X(\alpha,m)$ is the union of some $X(\alpha',m').$ 
\end{corollary}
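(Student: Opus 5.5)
The plan is to realize $\overline{X(\alpha,m)}$ as the support of the flag sheaf complex $\mathcal{L}_{\alpha,\underline{\lambda}}$ of Equation~\ref{alpha}, and then to use Theorem~\ref{th} to show that this support is a union of whole strata. I fix any $\underline{\lambda}$ with $\sum\lambda_i=m$, for instance $\underline{\lambda}=(1,\dots,1)$.

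\emph{Step 1 (the strata partition $\bfE_{\bfV}$).} Every representation $M_x$ of the Kronecker quiver decomposes as a preprojective part, a regular part and a preinjective part. Hence every $x\in\bfE_{\bfV}$ lies in exactly one $X(\alpha',m')$, where $\alpha'$ records the preprojective and preinjective multiplicities and $m'$ is fixed by the dimension count. So $\bfE_{\bfV}=\bigsqcup_{(\alpha',m')}X(\alpha',m')$, and it suffices to prove: if $\overline{X(\alpha,m)}\cap X(\alpha',m')\neq\emptyset$, then $X(\alpha',m')\subset\overline{X(\alpha,m)}$.

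\emph{Step 2 (the support of $\mathcal{L}_{\alpha,\underline{\lambda}}$).} By the discussion after Equation~\ref{alpha}, $\mathcal{L}_{\alpha,\underline{\lambda}}$ is supported on $\overline{X(\alpha,m)}$, and $i_x^*\mathcal{L}_{\alpha,\underline{\lambda}}\neq 0$ at every geometric point $x\in\overline{X(\alpha,m)}$. Therefore
\[
\supp\mathcal{L}_{\alpha,\underline{\lambda}}=\overline{X(\alpha,m)}.
\]
Now suppose $y\in\overline{X(\alpha,m)}\cap X(\alpha',m')$. Then $j_{\alpha',m'}^*\mathcal{L}_{\alpha,\underline{\lambda}}$ has a nonzero stalk at $y$, so it is a nonzero complex.

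\emph{Step 3 (each stratum is covered entirely).} Since $\mathcal{L}_{\alpha,\underline{\lambda}}$ is a flag sheaf, namely a pushforward ${\pi_{\underline{\nu}}}_!\overline{\bbQ_l}$ up to shift and twist, Theorem~\ref{th} says that every simple constituent of every ${}^p\mathbf{H}^j(j_{\alpha',m'}^*\mathcal{L}_{\alpha,\underline{\lambda}})$ is of the form $j_{\alpha',m'}^*IC(X(\alpha'),L_\chi)$. By Lemma~\ref{lem24},
\[
j_{\alpha',m'}^*IC(X(\alpha'),L_\chi)\cong{j^0_{\alpha'}}_{!*}L_\chi[\dim X(\alpha')]\Bigl(\frac{\dim X(\alpha')}{2}\Bigr).
\]
This is the intermediate extension of a nonzero local system from the dense open subset $X(\alpha')$ of the irreducible variety $X(\alpha',m')$. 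Its support is therefore closed in $X(\alpha',m')$ and contains $X(\alpha')$, so it equals $X(\alpha',m')$.

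The support of a complex is the union of the supports of its perverse cohomology sheaves, and the support of a perverse sheaf is the union of the supports of its simple constituents. Since $j_{\alpha',m'}^*\mathcal{L}_{\alpha,\underline{\lambda}}\neq0$, some ${}^p\mathbf{H}^j$ is nonzero and has at least one simple constituent, whose support is all of $X(\alpha',m')$. Hence $\supp j_{\alpha',m'}^*\mathcal{L}_{\alpha,\underline{\lambda}}=X(\alpha',m')$.

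Because supports commute with restriction to a locally closed subvariety, this gives $X(\alpha',m')\subset\supp\mathcal{L}_{\alpha,\underline{\lambda}}=\overline{X(\alpha,m)}$. Combining with Step 1, $\overline{X(\alpha,m)}$ is the union of those $X(\alpha',m')$ that it meets. By Lemma~\ref{stra}, these all satisfy $(\alpha',m')\le(\alpha,m)$.

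The only delicate point is Step 3: one needs every constituent to have support equal to the \emph{whole} stratum, not just some closed piece of it. This is exactly what the combination of Theorem~\ref{th} with the intermediate-extension description in Lemma~\ref{lem24} provides.
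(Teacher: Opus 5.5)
Your proposal is correct and follows essentially the same route as the paper. Both arguments combine three facts: $\mathcal{L}_{\alpha,\underline{\lambda}}$ has nonvanishing stalks exactly on $\overline{X(\alpha,m)}$; Theorem~\ref{th} forces every constituent of a nonzero restriction to $X(\alpha',m')$ to be nonzero on the dense open $X(\alpha')$; so each stratum $X(\alpha',m')$ meeting $\overline{X(\alpha,m)}$ lies entirely inside it. The only difference is presentation: the paper splits into cases according to whether $j_{\alpha'}^*\mathcal{L}_{\alpha,\underline{\lambda}}$ vanishes and uses $X(\alpha',m')\subset\overline{X(\alpha')}$, while you argue via the full support of ${j^0_{\alpha'}}_{!*}L_\chi$ in $X(\alpha',m')$, which amounts to the same point.
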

\begin{proof}
 Considering $\mathcal{L}_{\alpha,\underline{\lambda}}$ in equation~\ref{alpha}, choosing $\underline{\lambda}=(m),$ we know $\mathbf{H}^i(j_{\alpha'}^*\mathcal{L}_{\alpha,\underline{\lambda}})$ is generated by $L_{\chi'}$ and $\operatorname{supp }\mathcal{L}_{\alpha,\underline{\lambda}}=\overline{X(\alpha,m)}.$ Thus if $j_{\alpha'}^*\mathcal{L}_{\alpha,\underline{\lambda}}\not=0,$ $X(\alpha')\subset \overline{X(\alpha,m)}.$ Moreover, $X(\alpha',m')\subset \overline{X(\alpha')}.$ Then $j_{\alpha'}^*\mathcal{L}_{\alpha,\underline{\lambda}}\not=0$ gives $X(\alpha',m')\subset \overline{X(\alpha,m)}.$ If $j_{\alpha'}^*\mathcal{L}_{\alpha,\underline{\lambda}}=0,$ by Theorem~\ref{th}, it follows that $j_{\alpha',m'}^*\mathcal{L}_{\underline{\lambda}}=0.$ For any geometric point $x\in \overline{X(\alpha,m)}\cap X(\alpha',m'),$ we obtain that $i_x^*\mathcal{L}_{\alpha,\underline{\lambda}}=0,$ contradicting  $\chi_n(\mathcal{L}_{\alpha,\underline{\lambda}})(x)\not=0,$ where $\chi_n$ is the trace map of $\operatorname{Fr_{q^n}}$ for $n\gg0.$ Thus $$\overline{X(\alpha)}=\cup_{(\alpha',m'),j_{\alpha'}^*\mathcal{L}_{\alpha,\underline{\lambda}}\not=0} X(\alpha',m').$$ 
\end{proof}
\section{The pointwise purity of $\mathcal{L}_0$}\label{4sec}
Let $r_X:X=X_0\times_{\operatorname{Spec}(\bbF_q)}\operatorname{Spec(\overline{\bbF_q})}\rightarrow X_0$ be the morphism induced by $\operatorname{Spec}(\overline{\bbF_q})\rightarrow \operatorname{Spec}(\bbF_q),$ and set $\operatorname{egf}=r_{X}^*:\cD^b_m(X_0,\overline{\bbQ_l})\rightarrow \cD^b_c(X,\overline{\bbQ_l}).$ By \cite[Lemma 5.3.8]{Pramod-2021}, $\operatorname{egf}(\mathcal{F})$ is equipped with a canonical Weil structure.

We will consider $$\mathcal{L}=\overline{\bbQ_l}|_{\bfE_{\bfV_{s_1}}}\diamond \cdots \diamond \overline{\bbQ_l}|_{\bfE_{\bfV_{s_r}}}
\diamond \overline{\bbQ_l}|_{\bfE_{\bfV_{\lambda_1}}}\diamond \overline{\bbQ_l}|_{\bfE_{\bfV_{\lambda_2}}}\cdots \diamond \overline{\bbQ_l}|_{\bfE_{\bfV_{\lambda_k}}}
\diamond \overline{\bbQ_l}|_{\bfE_{\bfV'_{s'_1}}}\diamond \cdots \diamond \overline{\bbQ_l}|_{\bfE_{\bfV'_{s'_t}}}$$ from equation~\ref{alpha} satisfying $\lambda_i=1$ which is a direct summand up to shift and Tate twist of the flag sheaf complex $L_{\underline{\nu}},$ where $\underline{\nu}=(\nu_1,\cdots,\nu_k),$ $\nu_i=i_1$ or $i_2.$ $IC(X(\alpha),L_{\chi})$ is a direct summand of $\mathcal{L}.$ Since the flag sheaf complex can be defined over $\bbF_q$, let $\mathcal{L}_0$ denote the corresponding element, $\operatorname{egf}(\mathcal{L}_0)=\mathcal{L}.$ We will prove that $\mathcal{L}_0$ is pointwise pure in the next proposition.
Recall $j_{\alpha'}:X(\alpha')\rightarrow \bfE_{\bfV}.$

\begin{proposition}\label{propp1}
 $\mathcal{L}_0$ is pointwise pure of weight $0,$ and $j_{\alpha,m}^*\mathcal{L}_0$ is pure.
\end{proposition}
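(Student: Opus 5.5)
Our strategy is to reduce the pointwise purity of $\mathcal{L}_0$ to a statement about the fibres of the proper map $\pi_{\underline{\nu}}$, and then to deduce the purity of the restriction $j_{\alpha,m}^*\mathcal{L}_0$ from the decomposition of Lemma~\ref{lem24} together with Gabber's purity theorem.

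\textbf{Step 1: reduction to fibres.} Since $\mathcal{L}_0$ is, up to shift and Tate twist, a direct summand of ${\pi_{\underline{\nu}}}_!\overline{\bbQ_l}[f(\underline{\nu})](\frac{f(\underline{\nu})}{2})$ defined over $\bbF_q$, it suffices to prove that the flag complex is pointwise pure of weight $0$. By proper base change, for $x\in \bfE_{\bfV}^{\operatorname{Fr}^{n}}$ the stalk of ${\pi_{\underline{\nu}}}_!\overline{\bbQ_l}$ at $x$ is $\mathbf{H}^*_c(\pi_{\underline{\nu}}^{-1}(x),\overline{\bbQ_l})$. Because $\underline{\nu}=(i_1,\dots,i_n)$ with $i_j\in I$, the fibre $\pi_{\underline{\nu}}^{-1}(x)$ is a projective variety of complete flags of subrepresentations of $M_x$. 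Pointwise purity of weight $0$ after the normalization $[f](\frac f2)$ is therefore equivalent to two conditions: the odd cohomology of the fibre vanishes, and $\operatorname{Fr}$ acts on $H^{2i}$ with eigenvalues of absolute value $q^{i}$, i.e.\ the cohomology is pure.

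\textbf{Step 2: the fibres are paved.} To verify these two conditions we aim to show that each fibre admits an $\alpha$-partition (or affine paving) into pieces of the form $\mathbb{A}^d$ defined over $\bbF_q$, possibly after passing to a finite extension. We would proceed by induction on $n$, using the last step of the flag. The set of one-dimensional subrepresentations $S_{i_n}\subset M_x$ is $\bbP(\operatorname{soc}_{i_n}M_x)$ (simple at vertex $2$, resp.\ $1$). Fibring over it gives
\[
\pi_{\underline{\nu}}^{-1}(x)\;\longrightarrow\; \bbP(\Hom(S_{i_n},M_x)),
\]
whose fibre over a line $W$ is the flag variety of $M_x/W$ for the shorter sequence $\underline{\nu}'=(i_1,\dots,i_{n-1})$. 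We would stratify the base by the isomorphism class of $M_x/W$. Here the Kronecker structure enters: $M_x/W$ is determined up to finitely many regular parameters, and Lemma~\ref{le} controls which summands can change. On each stratum the map should be a Zariski-locally trivial fibration, so that the $\alpha$-partition propagates by induction.

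\textbf{Main obstacle.} We expect Step 2 to be the hardest part, because the regular tubes carry continuous moduli: for general $x$ the strata of $\bbP(\operatorname{soc})$ need not be affine spaces. The first thing we would try is to avoid needing a paving at all. Instead, we would use that $\mathcal{L}$ is a direct summand of an induction product of constant sheaves, and that each constant factor is pointwise pure: $\overline{\bbQ_l}$ on $\bfE_{\bfV_{s}}$ is a pure sheaf on an affine space, and the regular factors with $\lambda_i=1$ are the constant sheaf on $\bfE_{(1,1)}$. We would then argue, as Lusztig does for the Hall algebra, that the stalk traces are polynomials in $q$ with nonnegative coefficients, namely counts of filtrations computed via Hall polynomials of the Kronecker quiver, which exist by Lemma~\ref{le}. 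Combined with the Weil-conjecture bound (Deligne's weight $\le 0$ for ${\pi}_!$ of a pure complex) and the self-duality $\mathbf{D}\mathcal{L}\cong\mathcal{L}$, the fact that these counts are polynomials in $q$ for all $\operatorname{Fr}^n$ should force all Frobenius eigenvalues on $H^{j}$ to have weight exactly $j$. This gives pointwise purity of weight $0$.

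\textbf{Step 3: purity of the restriction.} For the second assertion, by Proposition~\ref{suan} and Lemma~\ref{lem24} we have $j_{\alpha,m}^*\mathcal{L}_0={p_{\alpha,\underline{\lambda}}}_!\overline{\bbQ_l}[\dim X(\alpha)](\frac{\dim X(\alpha)}{2})$, where $p_{\alpha,\underline{\lambda}}$ is proper and $W_{\alpha,\underline{\lambda}}$ is smooth. By Deligne's theorem this pushforward is pure of weight $0$. Alternatively, each summand ${j_\alpha^0}_{!*}L_\chi[\dim X(\alpha)](\frac{\dim X(\alpha)}{2})$ is pure by Gabber's theorem, since $L_\chi$ is pure of weight $0$ as a local system with finite monodromy. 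Either route gives the purity of $j_{\alpha,m}^*\mathcal{L}_0$.
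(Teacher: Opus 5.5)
\textbf{Gap: the pointwise purity of $\mathcal{L}_0$ is not established.} Your Step 3 is fine for the top stratum: $j_{\alpha,m}^*\mathcal{L}_0$ is the proper pushforward of a shifted constant sheaf from the smooth $W_{\alpha,\underline{\lambda}}$, so it is pure by Deligne. The first and main assertion, however, is not proved. The paving in Step 2 is left open. As you note yourself, stratifying $\bbP(\operatorname{soc}M_x)$ by the isomorphism type of $M_x/W$ gives non-affine strata once regular summands vary in a $\bbP^1$-family.

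Your fallback argument is invalid. Purity of $\mathcal{L}_0$, self-duality $\mathbf{D}\mathcal{L}\cong\mathcal{L}$, and polynomial stalk counts with nonnegative coefficients do not imply that $\mathcal{H}^j(\mathcal{L}_0)_x$ is pure of weight $j$, for three reasons.
\begin{itemize}
\item Purity of the complex only bounds the weights of $*$-stalks from above, by $j$.
\item The lower bound concerns $i_x^!$, and duality exchanges $i_x^*$ with $i_x^!$, so it says nothing more about the same stalk.
\item The alternating trace allows cancellation between degrees.
\end{itemize}
Here is a concrete counterexample. Take a proper map $\pi:S\to B$ from a smooth surface to a curve with a split nodal cubic $C$ as fibre over a rational point (e.g.\ a pencil of plane cubics blown up at its base points). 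Then $\pi_!\overline{\bbQ_l}[2](1)$ is pure of weight $0$ and self-dual, and $\#C(\bbF_{q^n})=q^n$ for all $n$. Yet $H^1(C)\cong\overline{\bbQ_l}$ has weight $0$ rather than $1$. Relatedly, in Step 1 the vanishing of odd cohomology is sufficient but not necessary for pointwise purity.

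\textbf{What is missing, and how the paper supplies it.} You need a mechanism giving pointwise purity at points of the smaller strata $X(\alpha',m')\subset\overline{X(\alpha,m)}$. The paper proceeds in four steps.
\begin{itemize}
\item It paves fibres only over the open stratum $X(\alpha,m)$, where they reduce to Springer fibres of the Jordan quiver (De Concini--Lusztig--Procesi).
\item At $x\in X(\alpha')$ it uses Theorem~\ref{th}: $j_{\alpha'}^*\mathcal{L}_0$ has lisse cohomology whose constituents are $L_{\chi'}$ up to rank-one twists from $\operatorname{Spec}\bbF_q$. Hence weights are constant along the irreducible stratum.
\item It takes a slice through $x$ transversal to $\cO_x$, with a cocharacter $\lambda(t)$ of $\Aut(M_x)$ that contracts the complementary directions; its fixed locus $x+\prod_i T_i$ meets $X(\alpha')$ in a smooth open set $U$. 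The contraction lemma $e^*\cong p_*$ for $\mathbb{G}_m$-equivariant complexes then applies: $e^*$ preserves weights $\le 0$ and $p_*$ preserves weights $\ge 0$. So the restriction of the pure complex to $U$ is pure, hence pointwise pure, being lisse on a smooth variety.
\item It propagates pointwise purity from $X(\alpha')$ to $X(\alpha',m')$ via ${}^p\mathbf{H}^l(j_{\alpha',m'}^*\mathcal{L}_0)=j^0_{!*}\bigl({}^p\mathbf{H}^l(j_{\alpha'}^*\mathcal{L}_0)\bigr)$ and induction on perverse truncation triangles.
\end{itemize}
That last step also gives purity of $j_{\alpha',m'}^*\mathcal{L}_0$ on every stratum, which is how the second assertion is actually used in Section~\ref{4sec}. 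Your Step 3 does not cover non-open strata: $\pi_{\underline{\nu}}^{-1}(X(\alpha',m'))$ need not be smooth, and restriction to a non-open stratum does not preserve purity.
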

This proposition will be proved in Section \ref{sec2}.

 Applying $\operatorname{egf},$ we obtain a Weil structure on ${\operatorname{Fr}^r}^*\mathcal{L}\rightarrow \mathcal{L},$ and, for an $\bbF_q$-point $x,$ $\mathbf{H}^i(\mathcal{L})_x\rightarrow \mathbf{H}^i(\mathcal{L})_x$ has eigenvalues of absolute value $q^{\frac{i}{2}}.$ By the purity of $j_{\alpha,m}^*\mathcal{L}_0$ and Theorem~\ref{th}, we already have that $$j_{\alpha',m'}^*\mathcal{L}=\oplus_{l,\chi'}j_{\alpha',m'}^*(IC(X(\alpha'),L_{\chi'}))^{\oplus s_{l,\chi'}}[l].$$
 Since $IC(X(\alpha),L_{\chi})$ is a direct summand of $\mathcal{L},$ we obtain $$j_{\alpha',m'}^*IC(X(\alpha),L_{\chi})=\oplus_{l'',\chi''}j_{\alpha',m'}^*(IC(X(\alpha'),L_{\chi'}))^{\oplus s_{l'',\chi''}}[l''].$$

 By \cite[Lemma 5.3.6]{Pramod-2021} and the proof in \cite[Lemma 5.3.8]{Pramod-2021}, we know that $\operatorname{Fr}^*$ is an equivalence of categories and preserves perverse t-structure. Furthermore, it preserves simple perverse sheaves. By $\operatorname{Fr}^{-1}(\overline{X(\alpha)}-X(\alpha))\subset \overline{X(\alpha)}-X(\alpha),$ and $\mathcal{L}=\oplus IC(X(\alpha),L_{\chi})^{\oplus K_{(1,\cdots,1),\chi}}\oplus L',$ with $\operatorname{supp}(L')\subset \overline{X(\alpha)}-X(\alpha)$ and a Kostka number $K_{(1,\cdots,1),\chi},$ the Weil structure on $\mathcal{L}$ can be restricted to $\oplus IC(X(\alpha),L_{\chi})^{\oplus K_{(1,\cdots,1),\chi}}.$ Applying ${j_{\alpha}^0}^*$ in Lemma~\ref{lem24} and the covering map, we see that this Weil structure is written as $\oplus(IC(X(\alpha),L_{\chi}),f_{\alpha,\chi})^{\oplus K_{(1,\cdots,1),\chi}},$ and is stable under Verdier duality. Thus it induces a Weil structure $f_{\alpha,\chi}$ on its direct summand $IC(X(\alpha),L_{\chi}).$ This is precisely the Weil structure chosen by Lusztig in \cite[Theorem 5.2]{lusztig1998canonical}. 
\begin{definition}\cite{lusztig1998canonical}
 The set of $\chi(IC(X(\alpha),L_{\chi}),f_{\alpha,\chi})$ is the canonical basis of the negative part of the quantized enveloping algebra of the Kronecker quiver. 
\end{definition}
In the next section, we give another proof different from Lusztig's, that $\chi(IC(X(\alpha),L_{\chi}),f_{\alpha,\chi})$ forms a basis.

It remains to understand how $j_{\alpha',m'}^*IC(X(\alpha),L_{\chi})=\oplus_{l'',\chi''}j_{\alpha',m'}^*(IC(X(\alpha'),L_{\chi'}))^{\oplus s_{l'',\chi''}}[l'']$ induces the Weil structure $f_{\alpha,\chi}$ on $\oplus_{l'',\chi''}j_{\alpha',m'}^*(IC(X(\alpha'),L_{\chi'}))^{\oplus s_{l'',\chi''}}[l''].$ 
\begin{remark}
 Since $\mathbf{H}^i(\mathcal{L})_x\rightarrow \mathbf{H}^i(\mathcal{L})_x$ has eigenvalues of absolute value $q^{\frac{i}{2}}$ for $\bbF_q$-point,
 
 \begin{equation}\label{bushi1}
[j_{\alpha',m'}^*(IC(X(\alpha),\chi),f_{\alpha,\chi})]=\sum_{\chi',k,i=1,\cdots,s_{k,\chi'}} [j_{\alpha',m'}^*(IC(X(\alpha'),L_{\chi'}),c_{\alpha',\chi',i}f_{\alpha',\chi'})[k](\frac{k}{2})],\end{equation} where $|c_{\alpha',\chi',i}|=1.$
\end{remark}
We consider the Hall algebra $\mathcal{H}_q$ of the Kronecker quiver, and recall that the negative part of the quantized enveloping algebra is isomorphic to the composition subalgebra of $\mathcal{H}_q,$ with $v=-\sqrt{q}$.

 $\mathcal{H}_q\cong \oplus_{\bfV}\cH_{\bfG_{\bfV}^{\operatorname{Fr}^r}}(\bfE_{\bfV}^{\operatorname{Fr}^r})$ by isomorphism class $[M]$ maps to $(-\sqrt{q})^{-\sum_{i\in I}\nu_i^2}1_{M},$ where $1_{M}\in \cH_{\bfG_{\bfV}^{\operatorname{Fr}^r}}(\bfE_{\bfV}^{\operatorname{Fr}^r})$ is a constant function on the orbit of $M.$

In $\cH_{\bfG_{\bfV}^{\operatorname{Fr}^r}}(\bfE_{\bfV}^{\operatorname{Fr}^r})$, we define the following functions.

For $\nu=(n,n)$, define $1_{(n,n)}$ by
\[
1_{(n,n)}(x)=
\begin{cases}
1,& M_x\text{ is regular},\\
0,& \text{otherwise},
\end{cases}
\qquad \text{where }\dim \bfV=\nu.
\]
For $1_{P_{s}^{a_{s}^P}}$, define
\[
1_{P_{s}^{a_{s}^P}}(x)=
\begin{cases}
1,& x\in \cO_{P_{s}^{a_{s}^P}}^{\operatorname{Fr}^r},\\
0,& \text{otherwise},
\end{cases}
\]
and define $1_{I_{s}^{a_s^I}}$ in the same way.

For $\alpha\in R$ and $\underline{\lambda}=(\lambda_1,\ldots,\lambda_k)$, $\lambda_1\geq \lambda_2\geq\cdots \geq \lambda_k$, set
\begin{equation}\label{eq11}
f^r_{\alpha,\underline{\lambda}}
=
1_{P_{s_1}^{a_{s_1}^P}}\cdots 1_{P_{s_r}^{a_{s_r}^P}}
\prod_{j=1}^k 1_{(\lambda_j,\lambda_j)}
1_{I_{s'_1}^{a_{s'_1}^I}}\cdots 1_{I_{s'_t}^{a_{s'_t}^I}},
\end{equation}
where
\[
\sum_{j=1}^r a_{s_j}^P P_{s_j}+\sum_{j=1}^t a_{s'_j}^I I_{s'_j}=\alpha,
\qquad
\text{and for }a<b,\ s_a\le s_b,\ s'_a\ge s'_b.
\]
Denote by $1_{\bfE_{\bfV}^{\operatorname{Fr}^r}}$ the constant function on $\bfE_{\bfV}^{\operatorname{Fr}^r}$. Let $\bfV_{s}$ be such that $P_{s}^{\oplus a_{s}^P}\in \bfE_{\bfV_{s}}$, let $\bfV'_{s}$ be such that $I_{s}^{\oplus a_{s}^I}\in \bfE_{\bfV'_{s}}$, and let $\bfV_{\lambda_i}$ have dimension $(\lambda_i,\lambda_i)$.

Define
\[
r^{r}_{\alpha,\underline{\lambda}}
=
1_{\bfE_{\bfV_{s_1}}}\cdots 1_{\bfE_{\bfV_{s_r}}}
\prod_{j=1}^k 1_{\bfE_{\bfV_{\lambda_j}}}
1_{\bfE_{\bfV'_{s'_1}}}\cdots 1_{\bfE_{\bfV'_{s'_t}}},
\]
where
\[
\sum_{j=1}^r a_{s_j}^P P_{s_j}+\sum_{j=1}^t a_{s'_j}^I I_{s'_j}=\alpha,
\qquad
\text{and for }a<b,\ s_a\le s_b,\ s'_a\ge s'_b.
\]
\begin{lemma}\cite[Theorem 5.1, Theorem 6.3]{+2000+97+116}\cite[Proposition 7.2]{linxiao}\label{cons}
$f^r_{\alpha,\underline{\lambda}}$ forms a basis of the negative part of the quantized enveloping algebra. For field $\bbF_q$, if $\underline{\nu}=(i_1,\ldots,i_n)$ with $i_j\in I$, then
\[
1_{\underline{\nu}}=\prod_{j=1}^n 1_{S_{i_j}},
\qquad
1_{\underline{\nu}}=\sum_{(\alpha,\underline{\lambda})} g_{\underline{\nu},(\alpha'',\underline{\lambda''})}(q)\, f^r_{(\alpha,\underline{\lambda})}.
\]
Moreover, $g_{\underline{\nu},(\alpha'',\underline{\lambda''})}(v)\in \bbZ[v].$
\end{lemma}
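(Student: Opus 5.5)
We argue inside the Hall algebra $\mathcal{H}_q$, identified with $\oplus_{\bfV}\cH_{\bfG_{\bfV}^{\operatorname{Fr}^r}}(\bfE_{\bfV}^{\operatorname{Fr}^r})$, where the negative part of the quantized enveloping algebra is the composition subalgebra $\mathcal{C}_q$ generated by $1_{S_1},1_{S_2}$. The identity $1_{\underline{\nu}}=\prod_j 1_{S_{i_j}}$ is immediate from the definitions: the iterated product ${p_3}_!{p_2}_!p_1^*$ of the constant functions $1_{S_{i_j}}$ counts full flags of type $\underline{\nu}$ stable under $x$. This is the function $\chi(\mathcal{L}_{\underline{\nu}})$ up to the normalising power of $-\sqrt q$, by Theorem~\ref{sheaf-function correspondence}. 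The substance of the lemma is therefore twofold: the $f^r_{\alpha,\underline{\lambda}}$ lie in $\mathcal{C}_q$ and form a basis, and the transition coefficients are integral polynomials.

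\textbf{Step 1 (membership).} The orbits $\cO_{P_s^{a}}$ and $\cO_{I_s^{a}}$ are open in their representation spaces and consist of rigid modules. Hence $1_{P_s^a}$ and $1_{I_s^a}$ lie in $\mathcal{C}_q$, which I would show by Ringel's argument via BGP reflection functors applied to the simple modules. For the regular part, take $\dim\bfV=(n,n)$. Every $x$ decomposes as $M_x\cong P_\beta\oplus H\oplus I_\beta$. By Lemma~\ref{le}, a Hall product $1_{P\text{-part}}\cdot 1_{(u,u)}\cdot 1_{I\text{-part}}$, taken in the order preprojective, regular, preinjective, is supported exactly on the corresponding direct sums, with a nonzero power-of-$q$ coefficient. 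Therefore $1_{\bfE_{\bfV}}=1_{(n,n)}+\sum_{\beta\neq 0}(\text{products of lower terms})$, and induction on $n$ gives $1_{(n,n)}\in\mathcal{C}_q$. The same argument (Proposition~\ref{suan} at the function level) shows that $f^r_{\alpha,\underline{\lambda}}$ is supported on $\bigcup_{(\alpha',m')\le(\alpha,m)}X(\alpha',m')^{\operatorname{Fr}^r}$. On $X(\alpha,m)^{\operatorname{Fr}^r}$ it equals a nonzero $q$-power times the regular Hall product $\prod_j 1_{(\lambda_j,\lambda_j)}$ evaluated on the regular summand.

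\textbf{Step 2 (independence and spanning).} I would use unitriangularity with respect to the order of Lemma~\ref{stra} on the $\alpha$-parts. This reduces independence to the independence, for fixed $m$, of the regular products $\prod_j 1_{(\lambda_j,\lambda_j)}$ as $\underline\lambda$ runs over partitions of $m$. That independence holds because, via the tube decomposition and Macdonald's Hall algebra, these products behave like the complete (or elementary) symmetric functions $h_{\underline\lambda}$. For spanning, I would compare dimensions. The number of pairs $(\alpha,\underline\lambda)$ of weight $\nu$ equals the Kostant partition function of $\widehat{\mathfrak{sl}}_2$, with real roots $(s-1,s)$ and $(s,s-1)$ and imaginary roots $(n,n)$ of multiplicity one. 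This agrees with $\dim \mathcal{C}_{q,\nu}=\dim U^-_\nu$ by the PBW theorem. Hence the $f^r_{\alpha,\underline\lambda}$ form a basis.

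\textbf{Step 3 (polynomiality).} Expand $1_{\underline{\nu}}=\sum g_{\underline{\nu},(\alpha,\underline\lambda)}(q)f^r_{\alpha,\underline\lambda}$ by evaluating both sides stratum by stratum, proceeding downward in the order. On the open part of the highest stratum $X(\alpha,m)$ of $\supp 1_{\underline{\nu}}$, the value of $1_{\underline{\nu}}$ at a point counts flags. By Hall polynomials for the Kronecker quiver (Szántó, Hubery) this count is a polynomial in $q$ with integer coefficients. Subtracting off and recursing, the coefficients are obtained by inverting a unitriangular matrix whose entries are Hall polynomials times powers of $v$. Such an inversion stays in $\bbZ[v,v^{-1}]$, and the normalisations $(-\sqrt q)^{-\sum\nu'_i\nu''_i-\cdots}$ only shift by powers of $v$.

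The main obstacle is the regular part. The regular Hall products must be expanded in a basis indexed only by $\underline\lambda$ with coefficients independent of the chosen $\bbF_q$-points on the tubes. I would handle it by Lusztig's symmetric-function description, following \cite{+2000+97+116}: Hall polynomials are uniform over homogeneous tubes, and $\dim (1,1)$ points of different degrees are grouped via symmetric functions. Positivity of the exponent, meaning that $g$ lies in $\bbZ[v]$ rather than only in $\bbZ[v,v^{-1}]$, I would then check by degree bookkeeping against the normalisation.
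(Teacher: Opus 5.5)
Your Steps 1 and 2 are sound. They are essentially the argument of the cited work of Zhang, which the paper only quotes: membership of the real-root factors via reflection functors, the recursion $1_{\bfE_{\bfV}}=1_{(n,n)}+\cdots$ for $1_{(n,n)}$, linear independence, and a dimension count against the Kostant partition function. Two refinements are needed.

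\begin{itemize}
\item By Lemma~\ref{le} the filtration contributing to $f^r_{\alpha,\underline\lambda}(x)$ is forced. So $f^r_{\alpha,\underline\lambda}$ is supported in $X(\alpha,m)^{\operatorname{Fr}^r}$ itself, not merely in a union of smaller strata. Proposition~\ref{suan} concerns the products $r^r_{\alpha,\underline\lambda}$ of constant functions, not $f^r$; the paper uses the exact support in Proposition~\ref{thmm} and the remark after it.
\item The independence of the regular products should be argued, not asserted. Restrict functions supported on regular modules to modules lying in one $\bbF_q$-rational homogeneous tube. This is an algebra map, because different tubes are Hom-orthogonal and the twist depends only on dimension vectors. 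It sends $1_{(r,r)}$ to $\sum_{|\mu|=r}u_\mu$ in Macdonald's Hall algebra, and $u_\mu\mapsto q^{-n(\mu)}P_\mu(y;q^{-1})$ sends this to $h_r$, since $\sum_{|\mu|=r}t^{n(\mu)}P_\mu(y;t)=h_r$.
\end{itemize}

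The genuine gap is Step 3, which is the real content of the second half of the lemma (the part the paper takes from Lin--Xiao--Zhang).
\begin{itemize}
\item \textbf{What fails.} The $f^r_{\alpha,\underline\lambda}$ with different $\alpha$ have disjoint supports. So there is no downward recursion over strata and no unitriangular matrix of Hall polynomials to invert. For fixed $\alpha$, the coefficients are determined only by the function $H\mapsto 1_{\underline\nu}(P_\alpha\oplus H\oplus I_\alpha)$ on regular $H$ of dimension $(m,m)$. This must be written in terms of the $p(m)$ functions $\prod_j 1_{(\lambda_j,\lambda_j)}$.
\item \textbf{Why it fails.} That function genuinely varies with $H$, through its tube decomposition and the degrees of the closed points. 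So ``the flag count on the stratum is a Hall polynomial'' has no meaning. The point you yourself call the main obstacle is simply deferred to the citation.
\item \textbf{One way to close it.} Evaluate only at $H=H_\mu$ in a single rational tube. There the restriction of $1_{\underline\nu}$ is $\sum_\mu N_\mu(q)u_\mu$, where $N_\mu(q)$ is a fixed power of $v$ times the number of $\underline\nu$-flags of $P_\alpha\oplus H_\mu\oplus I_\alpha$; these counts are polynomial in $q$ by the Hall polynomials you invoke. The restriction of $\sum g\, f^r$ is $\sum_{\underline\lambda} g_{\underline\lambda}\,v^{c_{\alpha,\underline\lambda}}h_{\underline\lambda}$. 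Since $P_\mu(y;t)$ expands in the $h_\lambda$ with coefficients in $\bbZ[t]$, solving gives $g\in\bbZ[v,v^{-1}]$ uniformly in $q$.
\end{itemize}

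Finally, the ``degree bookkeeping'' meant to place $g$ in $\bbZ[v]$ cannot succeed with the paper's normalisation of the product. Take $\dim\bfV=(1,1)$.
\begin{itemize}
\item $1_{S_1}1_{S_2}=(-\sqrt q)^{-2}1_{\bfE_{\bfV}}$.
\item $1_{\bfE_{\bfV}}=f^r_{0,(1)}+f^r_{P_1+I_1,\emptyset}$: here $P_1=S_2$, $I_1=S_1$, $f^r_{0,(1)}=1_{(1,1)}$ is the indicator of $x\neq 0$, and $1_{S_2}1_{S_1}$ is the indicator of $x=0$.
\end{itemize}
So both coefficients equal $v^{-2}$, i.e.\ $q^{-1}$. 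Only the Laurent statement holds, and that is all Proposition~\ref{thmm} actually uses.
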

We now decompose the intersection cohomology into $j_{\alpha,m}^*IC(X(\alpha),L_{\chi})$, with Weil structures given by $j_{\alpha,m}^*f_{\alpha,\chi}$. For $\alpha\in R$, $m\in X(S_m)$, and $m\in \bbN$, we define the elements
\[
\chi({j_{\alpha,m}}_!j_{\alpha,m}^*IC(X(\alpha),L_{\chi}),{j_{\alpha,m}}_!j_{\alpha,m}^*f_{\alpha,\chi})
\]
as the PBW basis of the Kronecker quiver.

\begin{proposition}\label{thmm}
In the modified Grothendieck group, $c_{k,\chi',i}=1$ in equation~\ref{bushi1}.
\end{proposition}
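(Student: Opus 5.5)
We work on the stratum $X(\alpha',m')$. The constants $c_{\alpha',\chi',i}$ in equation~\ref{bushi1} are determined by traces of Frobenius over the open dense piece $X(\alpha')\subset X(\alpha',m')$. So the plan is to compare two expressions for the function $\chi(j_{\alpha',m'}^*IC(X(\alpha),L_\chi),j_{\alpha',m'}^*f_{\alpha,\chi})$ restricted to $X(\alpha')^{\operatorname{Fr}^{r}}$, for all $r$ simultaneously. One expression comes from equation~\ref{bushi1}. The other comes from the Hall algebra and is a polynomial expression in $q^{r}$ with integer coefficients. Matching them for all $r$ should force every $c$ to be $1$, once we pass to the modified Grothendieck group, where classes are identified after summing over the scalars.

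First we fix the reference Weil structures. On $X(\alpha')$, the local system $L_{\chi'}$ with structure $f_{\alpha',\chi'}$ comes from the covering $p^{rss}_{\alpha',(1,\dots,1)}$ of Lemma~\ref{lem24}. Hence its trace at an $\bbF_{q^r}$-point $x$ is $\chi'(w_x)$, where $w_x\in S_{m'}$ is the Frobenius class of the $m'$ pairwise distinct $(1,1)$-regular summands of $M_x$, i.e.\ of their points in $\bbP^1$. By equation~\ref{bushi1}, the trace of $j_{\alpha',m'}^*IC(X(\alpha),L_\chi)$ at such an $x$ is
\[
\sum_{\chi',k,i}c_{\alpha',\chi',i}\,(-1)^k q^{-rk/2}\chi'(w_x).
\]
Here the purity from Proposition~\ref{propp1} guarantees that the $c$'s are unitary scalars independent of $x$.

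Next we compute the same trace by the Hall algebra. We expand $\mathcal{L}$ and $IC(X(\alpha),L_\chi)$ in $f_{\bfV}$. By Lemma~\ref{cons} and Theorem~\ref{sheaf-function correspondence}, $\chi(\mathcal{L})=r^r_{\alpha,(1,\dots,1)}$, and its value at $x$ is a Hall number. These Hall numbers are given by polynomials in $q^r$ with integer coefficients, and by a symmetric-function part depending only on the cycle type of $w_x$ (for $\lambda_i=1$, via Proposition~\ref{suan} and counting flags of regular subrepresentations).

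Separating the $\chi'$-isotypic parts by orthogonality of characters of $S_{m'}$ (letting $w_x$ range over all classes as $x$ varies) gives an identity
\[
\sum_{k,i}c_{\alpha',\chi',i}(-1)^kq^{-rk/2}=P_{\chi'}(q^{r/2})
\]
for a Laurent polynomial $P_{\chi'}$ with integer coefficients, valid for all $r\ge1$. The one non-routine input is that $IC(X(\alpha),L_\chi)$ is cut out of $\mathcal{L}$ with the Weil structure $f_{\alpha,\chi}$ chosen compatibly for all $r$. We handle this by induction along the order on $(\alpha,m)$: we subtract the contributions of strictly larger strata, which carry Weil structures already known to have trivial twisting constants.

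The main obstacle is concluding $c=1$ from the identity above. Grouping by $k$, the sum $S_k(r)=\sum_i c_i^r$ (the $c$'s raised to the power $r$ under $\operatorname{Fr}^r$) must equal an integer independent of $r$ for every $r$. A finite power sum $\sum_i c_i^r$ with $|c_i|=1$ that is constant in $r$, and equal to the count $s_{k,\chi'}$ at $r=0$, forces all $c_i=1$. To see this, average over $r$: the limit $\frac1N\sum_{r\le N}S_k(r)$ equals the number of $i$ with $c_i=1$, while constancy makes this limit equal to $s_{k,\chi'}$. Positivity of the coefficients of $P_{\chi'}$ is not needed. The constant must, however, be identified as $s_{k,\chi'}$. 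This uses the semisimplicity of $j_{\alpha',m'}^*\mathcal{L}$ from Theorem~\ref{th}, which allows the multiplicities to be read off as dimensions of the corresponding summands.
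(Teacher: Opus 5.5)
Your skeleton is the paper's: compare trace functions over all $\bbF_{q^r}$, use that the Hall-algebra side (Lemma~\ref{cons}) is given by integer Laurent polynomials independent of $r$, isolate the $\chi'$-components, and force the constants to be $1$. Replacing the paper's linear-independence argument (via the basis $f^r_{\alpha',\underline{\lambda'}}$ and the Kostka numbers of Remark~\ref{rem25}) by orthogonality of $S_{m'}$-characters is fine. You must, however, restrict to $r\geq r_0$, with $r_0$ large enough that every conjugacy class of $S_{m'}$ occurs as some $w_x$ with $x\in X(\alpha')(\bbF_{q^r})$.

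The genuine gap is the step that produces a Hall-polynomial expression for the trace of $IC(X(\alpha),L_\chi)$ itself. Lemma~\ref{cons} gives such an expression only for $\chi(\mathcal{L})$, and the induction you describe cannot extract the summand, for two reasons.
First, $\mathcal{L}$ contains every $IC(X(\alpha),L_{\chi_1})$, $\chi_1\in X(S_m)$, with multiplicity $K_{(1,\dots,1),\chi_1}$, all on the same stratum. Orthogonality on $X(\alpha')$ separates the lower characters $\chi'$, not these top characters, and subtracting other strata does nothing about them.
Second, the summands of $L'$ are shifted $IC(X(\alpha''),L_{\chi''})$ carrying Weil structures inherited from $\mathcal{L}_0$. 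Their twisting constants relative to $f_{\alpha'',\chi''}$ are constants of exactly the type you are trying to control (for $\mathcal{L}$ on $X(\alpha'')$). They are not supplied by an induction hypothesis about restrictions of $IC$'s of smaller strata.

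The repair, which is the paper's route, is to isolate nothing. Run the trace comparison for $j^*_{\alpha',m'}\mathcal{L}$, conclude that all its constants are $1$, and then pass to the summand. This passage works because $j^*_{\alpha',m'}\mathcal{L}_0$ is pure (Proposition~\ref{propp1}). Hence a simple constituent of weight $w$ occurs only in perverse degree $w$, and so enters the Grothendieck group with sign $(-1)^w$ both in $(IC(X(\alpha),L_\chi),f_{\alpha,\chi})$ and in its complement. A constant $c\neq 1$ in the summand therefore cannot be cancelled.

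Your final step also needs repair. The displayed trace should carry $c^r$, not $c$. More importantly, since $S_k(r)$ depends on $r$, ``grouping by $k$'' is not a comparison of coefficients. Instead, view both sides as functions of $r\geq r_0$ and expand them in exponentials $r\mapsto\lambda^r$. On your side $\lambda=c_i q^{-k/2}$ with coefficient $(-1)^k$; on the Hall side $\lambda=q^{j/2}$. Since $|\lambda|$ determines $k$, equal $\lambda$'s on your side carry the same sign and cannot cancel. Linear independence of distinct exponentials then forces every $c_iq^{-k/2}$ to be a positive real, i.e.\ $c_i=1$. This makes both the averaging and the identification of the constant with $s_{k,\chi'}$ unnecessary; your justification of that identification via Theorem~\ref{th} is unclear in any case.
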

\begin{proof}
We study the flag sheaf complex $\mathcal{L}$ in Proposition~\ref{propp}, with the Weil structure induced by $\mathcal{L}_0.$ 

By Lemma~\ref{cons} and equation~\ref{eq11}, after applying the trace map there exists a collection of polynomials $h_{\alpha',\underline{\lambda'}}(v)$ such that
\[
\chi(j_{\alpha',m'}^*\mathcal{L})
=
j_{\alpha',m'}^*\chi(\mathcal{L})
=
j_{\alpha',m'}^*\!\left(\sum_{\alpha'',\underline{\lambda''}} h_{\alpha'',\underline{\lambda''}}(q)\, f^r_{\alpha'',\underline{\lambda''}}\right)
=
\sum_{\alpha'',\underline{\lambda''}} h_{\alpha'',\underline{\lambda''}}(q)\, j_{\alpha',m'}^* f^r_{\alpha'',\underline{\lambda''}}.
\]

If $\alpha''\not=\alpha',$ $j_{\alpha',m'}^*f^r_{\alpha'',\underline{\lambda''}}=0.$ Since the elements $f^r_{\alpha'',\underline{\lambda''}}$ form a basis, fix $(\alpha,m),$ $\{f^r_{\alpha,\underline{\lambda}}|\underline{\lambda}=(\lambda_1,\cdots,\lambda_k),k\in \bbN_{\geq 1}, \sum_{i=1}^k \lambda_i=m, \lambda_j\geq\lambda_{j+1},j=1,\cdots,k-1\}$ are $\bbC$-linearly independent.

There also exist Kostka numbers $K_{\underline{\lambda'},\chi'}\in \bbN$ such that, for $\underline{\lambda'}=(\lambda'_1,\ldots,\lambda'_k)$ with $\sum \lambda'_j=m,$ $\lambda_i\geq \lambda_{i+1},\ i=1\cdots k-1$ and $\chi\in S_m$, we have
\[
j_{\alpha',m'}^*f^r_{\alpha',\underline{\lambda'}}
=
\chi({p_{\alpha,\underline{\lambda}}}_!\overline{\bbQ_l}[\dim X(\alpha)](\frac{\dim X(\alpha)}{2}))
=
\sum_{\chi} K_{\underline{\lambda'},\chi'}\,
\chi\!\left(j_{\alpha',m'}^*(IC(X(\alpha'),L_{\chi'}),f_{\alpha',\chi'})\right),
\]
which follows from Remark~\ref{rem25}. 
Since $j_{\alpha',m'}^*f^r_{\alpha',\underline{\lambda'}}$ are $\bbC$-linearly independent, by comparing dimensions, $\chi\!\left(j_{\alpha',m'}^*(IC(X(\alpha'),L_{\chi'}),f_{\alpha',\chi'})\right)$ are $\bbC$-linearly independent.

Thus $\chi(j_{\alpha',m'}^*\mathcal{L})\in \bbZ[\sqrt{q},\sqrt{q}^{-1}](\chi(j_{\alpha',m'}^*(IC(X(\alpha'),L_{\chi'}),f_{\alpha',\chi'})),\chi'\in X(S_{m'})),$ and is independent of the choice of $q$. 
 Choosing $\bbF_{q^n}$-points for $n>0$ yields the desired conclusion.
\end{proof}
\begin{remark}
 We have that ${j_{\alpha',m'}}_!j_{\alpha',m'}^*f^r_{\alpha',\underline{\lambda'}}=f^r_{\alpha',\underline{\lambda'}}.$ It follows that the set of $$\chi\!\left({j_{\alpha',m'}}_!j_{\alpha',m'}^*(IC(X(\alpha'),L_{\chi'}),f_{\alpha',\chi'})\right)$$ consists of a basis, which is the PBW basis in the function-theoretic setting. This is because the elements $f^r_{\alpha,\underline{\lambda}}$ form a basis and $K_{\underline{\lambda},\chi}$ is a Kostka number. 
\end{remark}
\section{The geometric realization of coefficients between PBW basis and canonical basis}
 In this section, we show that the transition coefficients between the PBW basis and the canonical basis have the following geometric interpretation: 
\[\begin{aligned}
[IC(X(\alpha),L_{\chi}),f_{\alpha,\chi}]
=\sum_{j\in \bbZ,(\alpha',m')\leq(\alpha,m),\chi'\in X(S_{m'})}
&\dim\Hom\!\left(\mathbf{H}^{j-\dim X(\alpha')}\!\bigl({j_{\alpha'}}^*IC(X(\alpha),L_{\chi})\bigr),\ 
L_{\chi'}\right)(-\sqrt{q})^{j}\\&
\bigl[{j_{\alpha',m'}}_!\, j_{\alpha',m'}^*\bigl(IC(X(\alpha'),L_{\chi'}),f_{\alpha',\chi'}\bigr)\bigr].\end{aligned}
\] This gives another proof, different from Lusztig's, that elements $\chi(IC(X(\alpha),L_{\chi}),f_{\alpha,\chi})$ form a basis.

We have already proved that $[j_{\alpha',m'}^*(IC(X(\alpha),L_\chi),f_{\alpha,\chi})]=\sum_{\chi',k}[j_{\alpha',m'}^*(IC(X(\alpha'),L_{\chi'}),f_{\alpha',\chi'}))[k](\frac{k}{2})].$ We now prove that the set of $[{j_{\alpha',m'}}_!j_{\alpha',m'}^*(IC(X(\alpha'),L_{\chi'}),f_{\alpha',\chi'})]$ is a $\bbZ[\sqrt{q},\sqrt{q}^{-1}]$-basis of $f_{\bfV}$.

By Proposition~\ref{propp1} and Theorem~\ref{th}, $j_{\alpha',m'}^*(IC(X(\alpha),L_{\chi}))$ has the form $$\oplus_{l,\chi'}j_{\alpha',m'}^*(IC(X(\alpha'),L_{\chi'}))^{\oplus s_{l,\chi'}}[l](\frac{l}{2}).$$

 If we denote
\begin{align*}
p^j_{(\alpha,\chi),(\alpha',\chi')}
:=&
\dim\Hom\!\left({}^p\mathbf{H}^j\!\bigl({j_{\alpha'}}^*IC(X(\alpha),L_{\chi})\bigr),\ 
L_{\chi'}[\dim X(\alpha')]\right),\\
&\dim\Hom\!\left(\mathbf{H}^{j-\dim X(\alpha')}\!\bigl({j_{\alpha'}}^*IC(X(\alpha),L_{\chi})\bigr),\ 
L_{\chi'}\right).
\end{align*}
and define the polynomial
\[
p_{(\alpha,\chi),(\alpha',\chi')}(v)
=
\sum_{j} p^j_{(\alpha,\chi),(\alpha',\chi')}\, v^j,
\]
then we have the following.

\begin{theorem}\label{main}
In $\mk^r$,
\[
[IC(X(\alpha),L_{\chi}),f_{\alpha,\chi}]
=
\sum_{(\alpha',m')\leq(\alpha,m),\chi'\in X(S_{m'})}
p_{(\alpha,\chi),(\alpha',\chi')}\!\left(-\sqrt{q}\right)\,
\bigl[{j_{\alpha',m'}}_!\, j_{\alpha',m'}^*\bigl(IC(X(\alpha'),L_{\chi'}),f_{\alpha',\chi'}\bigr)\bigr],
\]
for any $q=p^r$. Applying the trace map $\chi$, \[
\chi(IC(X(\alpha),L_{\chi}),f_{\alpha,\chi})
=
\sum_{(\alpha',m')\leq(\alpha,m),\chi'\in X(S_{m'})}
p_{(\alpha,\chi),(\alpha',\chi')}\!\left(-\sqrt{q}\right)\,
\chi({j_{\alpha',m'}}_!\, j_{\alpha',m'}^*\bigl(IC(X(\alpha'),L_{\chi'}),f_{\alpha',\chi'}\bigr)).
\] 
\end{theorem}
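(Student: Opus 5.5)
The plan is to decompose $[IC(X(\alpha),L_{\chi}),f_{\alpha,\chi}]$ along the stratification of $\overline{X(\alpha)}$ by the strata $X(\alpha',m')$, and then identify each stratum contribution using Theorem~\ref{th}, Proposition~\ref{propp1} and Proposition~\ref{thmm}.

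First, by the Corollary following Theorem~\ref{th}, $\overline{X(\alpha)}=\overline{X(\alpha,m)}$ is a finite union of strata $X(\alpha',m')$. By Lemma~\ref{stra}, each such stratum satisfies $(\alpha',m')\le(\alpha,m)$. We can order these strata compatibly with closure, so that successive unions are open or closed. Iterating the distinguished triangles ${j_U}_!j_U^*A\to A\to {i_Z}_*i_Z^*A\xrightarrow{+1}$ for open $U$ and closed complement $Z$, we obtain in $\mk^r$
\[
[IC(X(\alpha),L_{\chi}),f_{\alpha,\chi}]=\sum_{(\alpha',m')\le(\alpha,m)}\bigl[{j_{\alpha',m'}}_!\,j_{\alpha',m'}^*(IC(X(\alpha),L_{\chi}),f_{\alpha,\chi})\bigr].
\]
These triangles are compatible with Weil structures, since the strata are defined over $\bbF_q$ and are $\operatorname{Fr}$-stable.

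Next, by equation~\ref{bushi1} combined with Proposition~\ref{thmm} (all $c=1$), each restriction decomposes in the Grothendieck group as
\[
[j_{\alpha',m'}^*(IC(X(\alpha),L_\chi),f_{\alpha,\chi})]=\sum_{\chi',k}s_{k,\chi'}\,[j_{\alpha',m'}^*(IC(X(\alpha'),L_{\chi'}),f_{\alpha',\chi'})[k](\tfrac{k}{2})].
\]
We then apply ${j_{\alpha',m'}}_!$, which is exact on the Grothendieck groups and respects Weil structures. By the rules of $\mk^r$, the shift-twist $[k](\frac k2)$ contributes the factor $(-1)^k\sqrt q^{\,k}=(-\sqrt q)^k$.

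It remains to identify the multiplicity $s_{k,\chi'}$ with $p^{k}_{(\alpha,\chi),(\alpha',\chi')}$. Restrict further along the open dense inclusion $j^0_{\alpha'}:X(\alpha')\to X(\alpha',m')$. By Lemma~\ref{lem24}, ${j^0_{\alpha'}}^*j_{\alpha',m'}^*IC(X(\alpha'),L_{\chi'})=L_{\chi'}[\dim X(\alpha')]$. Hence ${j_{\alpha'}}^*IC(X(\alpha),L_\chi)\cong\bigoplus_{k,\chi'}L_{\chi'}[\dim X(\alpha')][k]^{\oplus s_{k,\chi'}}$, a direct sum of shifted local systems. Reading off ${}^p\mathbf{H}^{j}$, which corresponds to the ordinary cohomology sheaf $\mathbf{H}^{j-\dim X(\alpha')}$, gives $s_{-j,\chi'}$ or $s_{j,\chi'}$ depending on the sign convention for the shift. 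Since the $L_{\chi}$ are simple and pairwise non-isomorphic, the dimension of $\Hom$ into $L_{\chi'}$ counts exactly this multiplicity. The sign convention must then be checked against the definition of $p^j$ so that the exponent of $(-\sqrt q)$ matches $j$. Because the stratum summands are shift-semisimple, the restriction to the dense open $X(\alpha')$ determines the full decomposition on $X(\alpha',m')$: by Theorem~\ref{th}, every constituent is of the form $j^*_{\alpha',m'}IC(X(\alpha'),L_{\chi'})$, and such a constituent is determined by its generic restriction.

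Combining the three steps yields the displayed identity. The trace version then follows by applying $\chi$ and Theorem~\ref{sheaf-function correspondence}.

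The main obstacle is the Weil-structure bookkeeping. We must ensure that the Frobenius eigenvalues on the summands are exactly $\sqrt q^{\,k}$ times those of $f_{\alpha',\chi'}$, with no extra unitary scalars and no mixing between different $\chi'$. This is precisely what purity (Proposition~\ref{propp1}) together with Proposition~\ref{thmm} provides. Care is needed that the decomposition of $j^*_{\alpha',m'}IC(X(\alpha),L_\chi)$ as a direct summand of $j^*_{\alpha',m'}\mathcal{L}$ is compatible with the Weil structure, and not merely valid geometrically.
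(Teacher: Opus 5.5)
Your proposal is correct and follows the paper's proof. You decompose $[IC(X(\alpha),L_\chi),f_{\alpha,\chi}]$ along the strata $X(\alpha',m')$ via the open--closed triangles, apply Propositions~\ref{propp1} and~\ref{thmm} on each stratum, and push forward by ${j_{\alpha',m'}}_!$. Your restriction to the dense open $X(\alpha')$ also makes explicit the identification of the multiplicities with $p^j$, which the paper leaves implicit.

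To close the sign check you left open: by the rules of $\mk^r$, $[A[k](\tfrac{k}{2})]=(-1)^k\sqrt{q}^{\,-k}[A]=(-\sqrt{q})^{-k}[A]$, not $(-\sqrt{q})^{k}$. Moreover ${}^p\mathbf{H}^j\bigl(L_{\chi'}[\dim X(\alpha')][k]\bigr)\neq 0$ only for $j=-k$, so $s_{k,\chi'}=p^{-k}_{(\alpha,\chi),(\alpha',\chi')}$ and the exponent of $-\sqrt{q}$ comes out exactly $j$.
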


\begin{proof}
For a closed embedding $Z\xrightarrow{i} X$ with complement $U\xrightarrow{j} X,$ there is $$j_!j^*\mathcal{F}\rightarrow\mathcal{F}\rightarrow i_!i^*\mathcal{F}\xrightarrow{+1}.$$ If there is a closed embedding $Z'\xrightarrow{i'} Z$ with complement $U'\xrightarrow{j'} Z,$ $$i_!{j'}_!{j'}^*i^*\mathcal{F}\rightarrow i_!i^*\mathcal{F}\rightarrow i_!{i'}_!{i'}^*i^{*}\mathcal{F}\xrightarrow{+1}.$$ Therefore, for any stratification $\sqcup U_i=X, $ if we denote the embedding $U_i\xrightarrow{j_i} X,$ then in the modified Grothendieck group, we have $$[(\mathcal{F},f)]=\sum [(j_i)_!j_i^*(\mathcal{F},f)].$$ 
Since $\{X(\alpha',m')\mid \alpha'\in R,\ m'\in \bbN\}$ is a stratification of $\bfE_{\bfV}$ and $\overline{X(\alpha,m)}\subset \bigsqcup_{(\alpha',m')\leq (\alpha,m)}X(\alpha',m') $, we have in $\mk^r$ that
\[
[IC(X(\alpha),L_{\chi}),f_{\alpha,\chi}]
=
\sum_{(\alpha',m')\leq(\alpha,m)}
\bigl[{j_{\alpha',m'}}_!\, j_{\alpha',m'}^*\bigl(IC(X(\alpha),L_{\chi}),f_{\alpha,\chi}\bigr)\bigr].
\]

By Proposition~\ref{propp1} and~\ref{thmm}, we also have
\[
\bigl[j_{\alpha',m'}^*\bigl(IC(X(\alpha),L_{\chi}),f_{\alpha,\chi}\bigr)\bigr]
=
\sum_{\chi'\in X(S_{m'})}
p_{(\alpha,\chi),(\alpha',\chi')}\!\left(-\sqrt{q}\right)\,
\bigl[j_{\alpha',m'}^*\bigl(IC(X(\alpha'),L_{\chi'}),f_{\alpha',\chi'}\bigr)\bigr].
\]
Combining these equalities gives the desired formula.
\end{proof}
It follows that $\chi(IC(X(\alpha),L_{\chi}),f_{\alpha,\chi})$ is a basis of the negative part of the quantized enveloping algebra, since elements $\chi({j_{\alpha,m}}_!j_{\alpha,m}^*(IC(X(\alpha),L_{\chi}),f_{\alpha,\chi}))$ form a basis, and by Verdier duality, it is fixed under the bar involution, which gives the following theorem of Lusztig.
\begin{theorem}\cite[Theorem 6.16]{Lusztig1992}\cite[Theorem 6.2]{lusztig1998canonical}
The algebra $\bfU^-$ obtained from the Kronecker quiver is isomorphic to $\bigoplus_{\bfV} \chi(f_{\bfV})$, with the multiplication induced from $\mk^r$. Moreover, the set of $\chi(IC(X(\alpha),L_{\chi}),f_{\alpha,\chi})$ gives a basis which is fixed under the bar involution. 
\end{theorem}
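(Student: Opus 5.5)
The plan is to derive the statement from Theorem~\ref{main}, Lemma~\ref{cons}, Theorem~\ref{th} and the purity result of Proposition~\ref{propp1}. It has three parts: (i) $\chi(f_{\bfV})$ is the $\bfV$-graded piece of the composition subalgebra, and $\bigoplus_{\bfV}\chi(f_{\bfV})$ is closed under the multiplication induced by $\Ind$; (ii) the elements $\chi(IC(X(\alpha),L_{\chi}),f_{\alpha,\chi})$ are linearly independent and span it; (iii) they are bar invariant. The isomorphism with $\bfU^-$ is then the identification of the composition subalgebra of $\mathcal{H}_q$ with $\bfU^-$ at $v=-\sqrt q$, already recalled before Lemma~\ref{cons}.

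\emph{Step 1 (simple summands of flag sheaves).} Let $A$ be a simple perverse summand of some $\mathcal{L}_{\underline{\nu}}$, which is semisimple by \cite{BBD}. Its support is irreducible and $\bfG_{\bfV}$-stable, so it meets a unique stratum $X(\alpha',m')$ in an open dense subset.
\begin{itemize}
\item Restricting to that stratum, Theorem~\ref{th} gives $j_{\alpha',m'}^*A\cong j_{\alpha',m'}^*IC(X(\alpha'),L_{\chi'})$ for some $\chi'$.
\item Both $A$ and $IC(X(\alpha'),L_{\chi'})$ are intermediate extensions of the same simple object on $X(\alpha',m')$, using Lemma~\ref{lem24} and the fact that $X(\alpha')$ is open dense in $X(\alpha',m')$. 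Hence $A\cong IC(X(\alpha'),L_{\chi'})$.
\item By Proposition~\ref{propp1}, the Weil structure inherited from $\mathcal{L}_0$ is pure. So, up to the unit scalars handled in Proposition~\ref{thmm}, $[\mathcal{L}_{\underline{\nu}}]$ is a $\bbZ[\sqrt q,\sqrt q^{-1}]$-combination of the classes $[IC(X(\alpha),L_{\chi}),f_{\alpha,\chi}]$.
\end{itemize}
Now use $\Ind(\mathcal{L}_{\underline{\nu}'}\boxtimes\mathcal{L}_{\underline{\nu}''})=\mathcal{L}_{\underline{\nu}'\underline{\nu}''}$ together with the fact that each $IC(X(\alpha),L_{\chi})$ is a summand of some $\mathcal{L}_{\underline{\nu}}$ (Section~\ref{4sec}). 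It follows that $\Ind$ of two such IC's is a summand of a flag sheaf, so its class lies in $\bigoplus f_{\bfV}$. This gives closure under multiplication.

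\emph{Step 2 (basis).} First I show $\chi(f_{\bfV})$ lies in the composition subalgebra and spans it.
\begin{itemize}
\item By Theorem~\ref{main}, each $\chi(IC)$ is a combination of PBW elements $\chi({j_{\alpha',m'}}_!j_{\alpha',m'}^*IC)$.
\item By the Remark after Proposition~\ref{thmm}, these PBW elements are obtained from the $f^r_{\alpha,\underline\lambda}$ by an invertible Kostka matrix. By Lemma~\ref{cons}, the $f^r_{\alpha,\underline\lambda}$ form a basis of the composition algebra.
\item Conversely, $1_{\underline{\nu}}=\chi(\mathcal{L}_{\underline{\nu}})$ up to normalisation, and these elements generate $\bfU^-$. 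By Step~1 they lie in $\chi(\bigoplus f_{\bfV})$.
\end{itemize}
For linear independence, I use that the transition matrix in Theorem~\ref{main} is unitriangular with respect to the order on $(\alpha,m)$, with $\chi$ adjoined. Its support is $(\alpha',m')\le(\alpha,m)$. The diagonal block $(\alpha',m')=(\alpha,m)$ is the identity, because there the term is $j_{\alpha,m}^*IC(X(\alpha),L_\chi)$ itself. Equivalently, $p_{(\alpha,\chi),(\alpha,\chi')}=\delta_{\chi\chi'}$, since $j_\alpha^*IC(X(\alpha),L_\chi)=L_\chi[\dim X(\alpha)]$. A unitriangular matrix is invertible, so the IC elements form a basis because the PBW elements do.

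\emph{Step 3 (bar invariance).} The bar involution on $\bfU^-$ corresponds to Verdier duality $\mathbf D$ on $\mk^r$, where $\mathbf D$ inverts $\sqrt q$ via Tate twists. $\mathbf D$ commutes with $\Ind$: $p_3$ is proper, $p_1$ and $p_2$ are smooth, and the shifts and twists in $\Ind$ are exactly self-dual. Also $\mathbf D\,\mathbf 1_{S_i}=\mathbf 1_{S_i}$, so $\mathbf D$ induces the algebra involution fixing the generators, which is the bar involution. Finally, Section~\ref{4sec} shows $\mathbf D(IC(X(\alpha),L_\chi),f_{\alpha,\chi})\cong(IC(X(\alpha),L_\chi),f_{\alpha,\chi})$, as the Weil structure was constructed to be stable under duality. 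Hence each basis element is bar invariant.

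I expect the main obstacle to be Step~1, namely closure under multiplication together with the correct Weil structures. One must show that the Weil structure $\Ind$ puts on a summand is $f_{\alpha',\chi'}$ up to an integral power of $\sqrt q$, and not some other unit scalar. I would handle this as in Proposition~\ref{thmm}: compare trace functions over all $\bbF_{q^n}$ against the linearly independent functions $f^r_{\alpha,\underline\lambda}$, using the $q$-independence of the coefficients $g$ from Lemma~\ref{cons}.
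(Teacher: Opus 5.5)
Your proposal is correct and follows essentially the paper's route. The basis property comes from the unitriangular expansion in Theorem~\ref{main} of the $\chi(IC(X(\alpha),L_{\chi}),f_{\alpha,\chi})$ in terms of the PBW elements, which are themselves a basis via the Kostka matrix and Lemma~\ref{cons}; bar invariance comes from the Verdier self-duality of $(IC(X(\alpha),L_{\chi}),f_{\alpha,\chi})$. Your Step~1 (using Theorem~\ref{th} to identify the simple summands of flag sheaves and obtain closure under $\Ind$ at the sheaf level) goes beyond what the paper writes: there, closure under multiplication is implicit in identifying $\bigoplus_{\bfV}\chi(f_{\bfV})$ with the composition algebra through the PBW basis, so the Weil-structure obstacle you anticipate is not on the critical path.
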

We now show that for any odd number $l+\dim(X(\alpha))-\dim(X(\alpha')),$ ${}^p\mathbf{H}^l(j_{\alpha',m'}^*IC(X(\alpha),L_{\chi}))$ is zero.
\begin{proposition}\label{prop13}
 If $l+\dim(X(\alpha))-\dim(X(\alpha'))$ is odd, then ${}^p\mathbf{H}^l(j_{\alpha',m'}^*IC(X(\alpha),L_{\chi}))=0.$ And ${}^p\mathbf{H}^l(j_{\alpha',m'}^*IC(X(\alpha),L_{\chi}))=0,$ for $l\geq0$ if $\alpha'\not=\alpha.$
\end{proposition}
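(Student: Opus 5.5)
The plan is to prove both statements by working on the open dense stratum $X(\alpha')\subset X(\alpha',m')$ and then transporting the information to all of $X(\alpha',m')$. By Theorem~\ref{th}, every simple constituent of ${}^p\mathbf{H}^l(j_{\alpha',m'}^*IC(X(\alpha),L_{\chi}))$ has the form $j_{\alpha',m'}^*IC(X(\alpha'),L_{\chi'})$. By Lemma~\ref{lem24}, such a constituent equals ${j^0_{\alpha'}}_{!*}L_{\chi'}[\dim X(\alpha')]$ and is nonzero on $X(\alpha')$. Hence a perverse cohomology sheaf ${}^p\mathbf{H}^l$ vanishes as soon as its restriction to $X(\alpha')$ vanishes. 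Since $j_{\alpha'}^0$ is an open embedding, restriction to $X(\alpha')$ is t-exact. Moreover, $j_{\alpha'}^*IC(X(\alpha),L_\chi)$ has locally constant cohomology sheaves: they are generated by the $L_{\chi'}$, as noted after Theorem~\ref{th}. Therefore
\[
{}^p\mathbf{H}^l\bigl(j_{\alpha'}^*IC(X(\alpha),L_\chi)\bigr)=\mathbf{H}^{l-\dim X(\alpha')}\bigl(j_{\alpha'}^*IC(X(\alpha),L_\chi)\bigr)[\dim X(\alpha')].
\]
Both claims thus reduce to statements about ordinary stalk cohomology of $IC(X(\alpha),L_\chi)$ at points of $X(\alpha')$.

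For the vanishing when $l\geq 0$ and $\alpha'\neq\alpha$: for $(\alpha',m')<(\alpha,m)$ the stratum $X(\alpha',m')$ lies in $\overline{X(\alpha)}\setminus X(\alpha,m)$, so $X(\alpha')$ lies in the boundary of $X(\alpha)$. The support condition for the intermediate extension $IC={j_\alpha}_{!*}$ gives $\mathbf{H}^i(IC)_x=0$ for $i\geq -\dim X(\alpha')$ at points $x$ of the boundary stratum $X(\alpha')$. This uses that $\overline{X(\alpha)}$ is stratified by the $X(\alpha'',m'')$, by the Corollary, together with the strictness built into ${j_\alpha}_{!*}$. Via the displayed identity this is exactly ${}^p\mathbf{H}^l=0$ for $l\ge 0$. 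To make it rigorous I would compute ${j_\alpha}_{!*}$ as iterated truncation $\tau_{\le -\dim X(\alpha'')-1}Rj_*$ across the strata, Deligne's formula, applied to the stratification by the $X(\alpha'',m'')$.

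For the parity statement I would use purity together with the trace computation. Proposition~\ref{propp1} gives pointwise purity of $\mathcal{L}_0$, hence of its direct summand $(IC(X(\alpha),L_\chi),f_{\alpha,\chi})$. So the Frobenius eigenvalues on $\mathbf{H}^i(IC)_x$ have absolute value $q^{i/2}$, and by Proposition~\ref{thmm} they are exactly $(-\sqrt q)^{i}$ times the eigenvalues on the $L_{\chi'}$ part, which have weight $0$. The key step is then to show that odd-degree stalk cohomology vanishes on $X(\alpha')$ after the shift by $\dim X(\alpha)$, i.e.\ $\mathbf{H}^{i}(IC(X(\alpha),L_\chi))_x=0$ unless $i\equiv \dim X(\alpha)\pmod 2$; equivalently, the $L_{\chi'}$-isotypic parts of $\mathbf{H}^{l-\dim X(\alpha')}$ vanish unless $l+\dim X(\alpha)-\dim X(\alpha')$ is even. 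I would derive this from $\mathcal{L}_{\alpha,(1,\dots,1)}$ via a resolution: $\mathcal{L}$ is ${\pi_{\underline{\nu}}}_!$ of a constant sheaf on a smooth variety, and its fibres over points of $X(\alpha')$ should admit affine pavings. Proposition~\ref{suan} and Lemma~\ref{lem24} reduce the fibres to products of Springer fibres for the Jordan quiver, which are paved by affines, and to flag data in preprojective/preinjective orbits. Hence the stalks of $\mathcal{L}$ are concentrated in degrees congruent to $f(\underline\nu)$ modulo $2$, and $IC(X(\alpha),L_\chi)$, being a summand, inherits this parity, with $\dim X(\alpha)\equiv f(\underline\nu)$ modulo $2$. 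Alternatively, one can count $\bbF_{q^n}$-points: by Lemma~\ref{cons} the trace $\chi(\mathcal{L})$ is a polynomial in $q$ on each stratum, and combined with purity this forces only even-weight, hence even-degree, contributions.

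I expect the main obstacle to be this parity and affine-paving step, in particular controlling the fibres involving the regular part over nongeneric points of $X(\alpha')$ and ensuring that the polynomial-count argument distinguishes individual $L_{\chi'}$-components rather than only their sum. The first thing I would try is the polynomial-count argument combined with purity, using the linear independence of the $\chi(j_{\alpha',m'}^*(IC(X(\alpha'),L_{\chi'}),f_{\alpha',\chi'}))$ established in the proof of Proposition~\ref{thmm}.
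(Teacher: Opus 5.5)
Your reduction to the open stratum $X(\alpha')\subset X(\alpha',m')$ via Theorem~\ref{th} and Lemma~\ref{lem24} is fine, and so is your proof of the vanishing for $l\geq 0$. Both agree with the paper, which uses exactly $j_{\alpha'}^*IC(X(\alpha),L_\chi)\in{}^p\cD^{\leq -1}(X(\alpha'))$. The general support condition for ${j_\alpha}_{!*}$ is all you need there; Deligne's formula with ordinary truncations along the $X(\alpha'',m'')$ would require locally constant cohomology on those strata, which need not hold.

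The genuine gap is the parity statement, which is the real content of the proposition. Write $d=\dim(X(\alpha))$ and $d'=\dim(X(\alpha'))$.

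Your paving route rests on Proposition~\ref{suan} and Lemma~\ref{lem24}, but these only describe the flag complexes over the top stratum $X(\alpha,m)$. Over $x\in X(\alpha')$ with $\alpha'\neq\alpha$, the fibre of $\pi$ parametrizes flags of subrepresentations of $M_x$ whose subquotients are no longer confined to the generic loci $\cO_{P_s^{a}}$, $X(0,\lambda_j)$, $\cO_{I_{s'}^{b}}$. So it does not reduce to Springer fibres times orbit data, and neither the paper nor your proposal gives a paving of it.

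Your counting route asserts that Lemma~\ref{cons} makes the trace a polynomial in $q$. That lemma is an identity in the twisted Hall algebra with coefficients in $\bbZ[v]$, $v=-\sqrt q$. After removing the normalizing powers of $-\sqrt q$, fibre counts are a priori Laurent polynomials in $\sqrt q$. Purity alone then still allows odd-degree stalks with Frobenius eigenvalues of absolute value $q^{k/2}$, $k$ odd. That only even powers survive is exactly the claim $v^{d-d'}p_{(\alpha,\chi),(\alpha',\chi')}(v)\in\bbQ[v^2,v^{-2}]$. Since $d-d'$ can be odd, odd powers of $v$ genuinely occur in the restricted traces.

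The difficulty you single out, separating the individual $L_{\chi'}$, is not the obstacle. The cohomology sheaves on $X(\alpha')$ are local systems, so one may test at points where Frobenius acts trivially on the $S_{m'}$-covering.

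The paper obtains the parity with no fibre geometry, by a Kazhdan--Lusztig type argument:
\begin{itemize}
\item It introduces the bar-invariant basis $P_{\alpha,\chi}=\sum_{\underline\lambda}c_{\chi,\underline\lambda}r^r_{\alpha,\underline\lambda}$ (via the inverse Kostka matrix), and the unitriangular expansions of $B_{\alpha,\chi}$, $P_{\alpha,\chi}$, $\overline{Q_{\alpha,\chi}}$ in the PBW basis.
\item A Hall algebra computation gives $v^{d-d'}u_{(\alpha,\chi),(\alpha',\chi')}\in\bbQ[v^2,v^{-2}]$, and bar-invariance of $P$ transfers this parity to the normalized bar matrix $v'$.
\item Bar-invariance of $B$ plus descending induction gives $p'-v^{2(d-d')}\overline{p'}\in\bbQ[v^2,v^{-2}]$.
\item The support condition $p'\in v^{d-d'-1}\bbQ[v^{-1}]$ makes the two terms have disjoint monomials, so $p'$ itself is even.
\end{itemize}

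If you prefer your purity route, the missing input is that raw fibre counts at split points of $X(\alpha')$ are Laurent polynomials in $q^n$ rather than in $\sqrt{q^n}$. One possible way is an integrality argument with $q$ a non-square, using that the expansion in Lemma~\ref{cons} is uniform in $q$. Granting that, Grothendieck--Lefschetz and Proposition~\ref{propp1} kill the stalks of the wrong parity. Then $IC(X(\alpha),L_\chi)$ inherits this as an unshifted summand of $\mathcal{L}_{\alpha,(1,\dots,1)}$, whose flag variety has dimension $d$. As written, your proposal contains neither this input nor the paper's argument.
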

\begin{proof}
 As in \cite{lusztig1990canonical}, we consider three bases. We denote the PBW basis $\chi({j_{\alpha,m}}_!\, j_{\alpha,m}^*\bigl(IC(X(\alpha),L_{\chi}),f_{\alpha,\chi}\bigr))=Q_{\alpha,\chi},$ canonical basis $\chi(IC(X(\alpha),L_{\chi}),f_{\alpha,\chi})=B_{\alpha,\chi}$ and polynomial basis $P_{\alpha,\chi}=\sum_{\underline{\lambda}}c_{\chi,\underline{\lambda}}r_{\alpha,\underline{\lambda}}^r,$ where $(c_{\chi,\underline{\lambda}})_{\chi,\underline{\lambda}}$ is inverse matrix of $(K_{\underline{\lambda}},\chi)_{\underline{\lambda},\chi},$ thus fixed under bar involution.

 By Proposition 4.1 in \cite{MCGERTY2005411}, we can find this kind of polynomial basis with $c_{\chi,\underline{\lambda}}\in \bbQ.$

 By Theorem~\ref{main}, we have the following equations, \begin{align}
 B_{\alpha,\chi}&=Q_{\alpha,\chi}+\sum_{(\alpha',m')< (\alpha,m),\chi'\in X(S_{m'})}p_{(\alpha,\chi),(\alpha',\chi')}(-\sqrt{q})Q_{\alpha',\chi'},\\
 P_{\alpha,\chi}&=Q_{\alpha,\chi}+\sum_{(\alpha',m')< (\alpha,m),\chi'\in X(S_{m'})}u_{(\alpha,\chi),(\alpha',\chi')}(-\sqrt{q})Q_{\alpha',\chi'},\\
 \overline{Q_{\alpha,\chi}}&=Q_{\alpha,\chi}+\sum_{(\alpha',m')< (\alpha,m),\chi'\in X(S_{m'})}v_{(\alpha,\chi),(\alpha',\chi')}(-\sqrt{q})Q_{\alpha',\chi'},
 \end{align}
 where $p_{(\alpha,\chi),(\alpha',\chi')}(v)\in \bbZ[v,v^{-1}],$ $u_{(\alpha,\chi),(\alpha',\chi')}\in \bbQ[v,v^{-1}]$ and $v_{(\alpha,\chi),(\alpha',\chi')}\in \bbQ[v,v^{-1}].$ If $\alpha=\alpha',$ $p_{(\alpha,\chi),(\alpha',\chi')}(v)=u_{(\alpha,\chi),(\alpha',\chi')}(v)=v_{(\alpha,\chi),(\alpha',\chi')}(v)=\delta_{\chi,\chi'}.$
 The third equation follows from the second one and that polynomial basis is fixed under the bar involution.

 In the Hall algebra, we calculate that $u'_{(\alpha,\chi),(\alpha',\chi')}(v):=v^{\dim(X(\alpha))-\dim(X(\alpha'))}u_{(\alpha,\chi),(\alpha',\chi')}(v)\in \bbQ[v^2,v^{-2}].$

 Now applying bar involution on $B_{\alpha,\chi}$ and $P_{\alpha,\chi},$ we have the following equation,
 \begin{align}
 p_{(\alpha,\chi),(\alpha',\chi')}(v)&=\sum_{(\alpha',m')\leq(\alpha'',m'')\leq(\alpha,m),\chi''\in X(S_{m''})}\overline{p_{(\alpha,\chi),(\alpha'',\chi'')}(v)}v_{(\alpha'',\chi''),(\alpha',\chi')(v)},\\
 u_{(\alpha,\chi),(\alpha',\chi')}(v)&=\sum_{(\alpha',m')\leq(\alpha'',m'')\leq(\alpha,m),\chi''\in X(S_{m''})}\overline{u_{(\alpha,\chi),(\alpha'',\chi'')}(v)}v_{(\alpha'',\chi''),(\alpha',\chi')(v)}.
 \end{align}
 If we denote $v'_{(\alpha'',\chi''),(\alpha',\chi')}(v)=v^{\dim (X(\alpha'))-\dim(X(\alpha''))}v_{(\alpha'',\chi''),(\alpha',\chi')}(v)$
 Then $$u'_{(\alpha,\chi),(\alpha',\chi')}(v)=v^{2\dim(X(\alpha))-2\dim(X(\alpha'))}\sum_{(\alpha',m')\leq(\alpha'',m'')\leq(\alpha,m),\chi''\in X(S_{m''})}\overline{u'_{(\alpha,\chi),(\alpha'',\chi'')}(v)}v'_{(\alpha'',\chi''),(\alpha',\chi')}(v).$$
 Thus $v'_{(\alpha'',\chi''),(\alpha',\chi')}(v)\in \bbQ[v^2,v^{-2}].$
 
 If we denote $p'_{(\alpha'',\chi''),(\alpha',\chi')}(v)=v^{\dim (X(\alpha''))-\dim(X(\alpha'))}p_{(\alpha'',\chi''),(\alpha',\chi')}(v),$
 then $$p'_{(\alpha,\chi),(\alpha',\chi')}(v)=v^{2\dim(X(\alpha))-2\dim(X(\alpha'))}\sum_{(\alpha',m')\leq(\alpha'',m'')\leq(\alpha,m),\chi''\in X(S_{m''})}\overline{p'_{(\alpha,\chi),(\alpha'',\chi'')}(v)}v'_{(\alpha'',\chi''),(\alpha',\chi')}(v).$$
 Since we already know that $p'_{(\alpha,\chi),(\alpha',\chi')}(v)\in \bbQ[v^2,v^{-2}]$ when $\alpha=\alpha',$ by descending induction, we have $p'_{(\alpha,\chi),(\alpha',\chi')}(v)-v^{2\dim(X(\alpha))-2\dim(X(\alpha'))}\overline{p'_{(\alpha,\chi),(\alpha',\chi')}(v)}\in \bbQ[v^2,v^{-2}].$

 Since ${j_{\alpha'}}^*(IC(X(\alpha),L_{\chi}))\in {}^p\cD^{\leq -1}(X(\alpha'))$ by \cite[Lemma 3.3.4]{Pramod-2021}\cite[Lemma 6.1]{laszlo2009} $p'_{(\alpha,\chi),(\alpha',\chi')}(v)\in v^{\dim(X(\alpha))-\dim(X(\alpha'))-1}\bbQ[v^{-1}].$ We obtain $$v^{2\dim(X(\alpha))-2\dim(X(\alpha'))}\overline{p'_{(\alpha,\chi),(\alpha',\chi')}(v)}\in v^{\dim(X(\alpha))-\dim(X(\alpha'))+1}\bbQ[v].$$ Then we prove that $$p'_{(\alpha,\chi),(\alpha',\chi')}(v)\in \bbQ[v^2,v^{-2}].$$
\end{proof}
\begin{corollary}
 $p_{(\alpha,\chi),(\alpha',\chi')}(v)$ has the following properties, \\
 $\bullet$ $p_{(\alpha,\chi),(\alpha,\chi)}(v)=1,$\\
 $\bullet$ $p_{(\alpha,\chi),(\alpha',\chi')}(v)\in v^{-1}\mathbb{N}[v^{-1}]$ for $(\alpha',m')<(\alpha,m)$ and $p_{(\alpha,\chi),(\alpha',\chi')}(v)=0$ for $(\alpha',m')>(\alpha,m),$\\
 $\bullet$ $v^{-\dim(X(\alpha))+\dim(X(\alpha'))}p_{(\alpha,\chi),(\alpha',\chi')}(v)\in \bbN[v^{-2}]$.
\end{corollary}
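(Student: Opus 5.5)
The corollary follows from the definition of $p_{(\alpha,\chi),(\alpha',\chi')}(v)$ as a generating function of multiplicities, combined with the support and parity information already established. I would take the three bullets in order.

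\emph{First bullet.} For $\alpha'=\alpha$ I restrict to the open stratum $X(\alpha)$. There $j_{\alpha}^*IC(X(\alpha),L_\chi)=L_\chi[\dim X(\alpha)](\frac{\dim X(\alpha)}{2})$. So $\mathbf{H}^{j-\dim X(\alpha)}$ is nonzero only for $j=0$, where it equals $L_\chi$. The local systems $L_\chi$, $\chi\in X(S_m)$, are simple and pairwise non-isomorphic, being the isotypic pieces of the covering $p^{rss}_{\alpha,\underline{\lambda}}$. Hence $\dim\Hom(L_\chi,L_{\chi'})=\delta_{\chi,\chi'}$, and $p_{(\alpha,\chi),(\alpha,\chi)}(v)=1$.

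\emph{Second bullet.} Vanishing for $(\alpha',m')>(\alpha,m)$ follows from Lemma~\ref{stra}: the support $\overline{X(\alpha)}=\overline{X(\alpha,m)}$ meets only strata with $(\alpha',m')\le(\alpha,m)$, so $j_{\alpha'}^*IC(X(\alpha),L_\chi)=0$ and all multiplicities vanish. For $(\alpha',m')<(\alpha,m)$, the coefficients $p^j$ are dimensions of Hom spaces, hence lie in $\bbN$. It remains to show $p^j=0$ for $j\ge 0$. This is exactly the second assertion of Proposition~\ref{prop13}, namely ${}^p\mathbf{H}^l(j_{\alpha',m'}^*IC(X(\alpha),L_\chi))=0$ for $l\ge0$ when $\alpha'\neq\alpha$. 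Equivalently it is the IC support condition $j_{\alpha'}^*IC\in{}^p\cD^{\le -1}$ from \cite[Lemma 3.3.4]{Pramod-2021}. Two ingredients translate perverse degrees on $X(\alpha',m')$ into $p^j$. By Theorem~\ref{th} and Lemma~\ref{lem24}, the perverse cohomologies are sums of $j_{\alpha',m'}^*IC(X(\alpha'),L_{\chi'})$. Further restricting to the open smooth $X(\alpha')$ gives shifted local systems $L_{\chi'}[\dim X(\alpha')]$. Therefore ${}^p\mathbf{H}^j$ on $X(\alpha')$ coincides with $\mathbf{H}^{j-\dim X(\alpha')}[\dim X(\alpha')]$, which is why the two expressions for $p^j$ agree. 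We conclude $p_{(\alpha,\chi),(\alpha',\chi')}\in v^{-1}\bbN[v^{-1}]$.

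\emph{Third bullet.} Proposition~\ref{prop13} shows that $p^j=0$ whenever $j+\dim X(\alpha)-\dim X(\alpha')$ is odd. In the proof it appears as $p'=v^{\dim X(\alpha)-\dim X(\alpha')}p\in\bbQ[v^2,v^{-2}]$. Since $p^j\in\bbN$, the polynomial $v^{\dim X(\alpha)-\dim X(\alpha')}p$ has natural coefficients in even powers only. The bullet as stated carries the exponent with the opposite sign, $v^{-\dim X(\alpha)+\dim X(\alpha')}$. The two parities coincide, since they differ by $v^{2(\dim X(\alpha)-\dim X(\alpha'))}$, an even power. So evenness transfers, and nonnegativity of coefficients is unchanged.

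The degree bound still needs checking. The top degree is $j\le -1$, and by the parity just established it satisfies $j\equiv \dim X(\alpha)-\dim X(\alpha')\pmod 2$. After multiplying by $v^{\dim X(\alpha')-\dim X(\alpha)}$, the exponents are $j-(\dim X(\alpha)-\dim X(\alpha'))$. This requires $j\le \dim X(\alpha)-\dim X(\alpha')$, which holds because $j\le-1<0\le \dim X(\alpha)-\dim X(\alpha')$; the last inequality holds since $X(\alpha')\subset\overline{X(\alpha)}$. Hence the result lies in $\bbN[v^{-2}]$.

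The main point needing care is the identification of the $p^j$ with perverse multiplicities: in particular, that restricting from $X(\alpha',m')$ to the open dense $X(\alpha')$ loses no constituents and correctly matches the two formulas defining $p^j$. This rests on Theorem~\ref{th} together with the simplicity of $j_{\alpha',m'}^*IC(X(\alpha'),L_{\chi'})$. The sign convention in the exponent should also be double-checked against Proposition~\ref{prop13}.
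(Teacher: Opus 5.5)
Your proposal is correct and follows the paper's route. The paper derives the corollary from the definition of $p_{(\alpha,\chi),(\alpha',\chi')}(v)$ together with Proposition~\ref{prop13}, using the IC support condition and the parity vanishing; your bullet-by-bullet argument spells this out, including the check that parity survives the sign change in the exponent and that $j\le -1$ leaves only nonpositive even exponents. The worry in your last paragraph is unnecessary for this corollary: you only need that vanishing of ${}^p\mathbf{H}^l$ on $X(\alpha',m')$ implies vanishing on the open subset $X(\alpha')$, which is immediate from $t$-exactness of restriction to an open subset.
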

\begin{proof}
 This corollary follows from the definition of $p_{(\alpha,\chi),(\alpha',\chi')}(v)$ and Proposition~\ref{prop13}.
\end{proof}
\begin{remark}
 For Dynkin quivers, there are similar results in \cite{lusztig1990canonical} and \cite{cd088214-009e-3813-9a86-97f9a299a24c}, and for finite symmetrizable cases, there are similar results in \cite{lan2025structurecoefficientsquantumgroups}.
\end{remark}
By \cite{MCGERTY2005411} and Proposition~\ref{propp}, we know, by using the trace map, that our PBW basis is the same as the basis defined by Nakajima and Beck in \cite{2002Crystal}, and we give a geometric expression of the coefficients between the canonical basis and the PBW basis and establish further properties of these coefficients in this article. This paper gives another proof of $p_{(\alpha,\chi),(\alpha',\chi')}(v)\in v^{-1}\mathbb{N}[v^{-1}]$ from \cite{McNamara2017} in the Kronecker quiver case, and moreover shows $v^{-\dim(X(\alpha))+\dim(X(\alpha'))}p_{(\alpha,\chi),(\alpha',\chi')}(v)\in \bbN[v^{-2}]$. 

\section{The proof of Theorem~\ref{th}}\label{sec}
To obtain a more concrete description of $j_{\alpha,m}^*\mathcal{L}_{\underline{\nu}}$, we need the following lemmas.

\begin{lemma}
For $\bfV$ with $\dim \bfV=(n,n-1)$, the variety
\[
K=\left\{(x,f)\ \middle|\ 
\begin{aligned}
&x\in \bfE_{\bfV},\ f=(0=V_0\subset V_1\subset V_2=\bfV)\text{ is a filtration of }\bbN I\text{-graded spaces},\\
&x(V_j)\subset V_j,\ M_{x|_{V_2/V_1}}\cong S_1,\ M_{x|_{V_1}}\text{ is a direct sum of indecomposable regular modules}
\end{aligned}\right\}
\]
is isomorphic to the complement in
\[
\bigsqcup_{s'\ge 1}\left\{(x,f)\ \middle|\ 
\begin{aligned}
&x\in \bfE_{\bfV},\ f=(0=V_0\subset V_1\subset V_2=\bfV)\text{ is a filtration of }\bbN I\text{-graded spaces},\\
&M_{x|_{V_1}}\cong I_{s'},\ M_{x|_{V_2/V_1}}\text{ is a direct sum of indecomposable regular modules}
\end{aligned}\right\}\times \mathbb{P}_{n-1}
\]
of the closed subvariety
\[
\bigsqcup_{i',s'\ge 1}\left\{(x,f)\ \middle|\ 
\begin{aligned}
&x\in \bfE_{\bfV},\ f=(0=V_0\subset V_1\subset V_2=\bfV)\text{ is a filtration of }\bbN I\text{-graded spaces},\\
&x(V_j)\subset V_j,\ M_{x|_{V_1}}\cong I_{s'},\ M_{x|_{V_2/V_1}}\text{ is a direct sum of indecomposable regular modules}
\end{aligned}\right\}\times \bbP_{n-s'-1}.
\]
\end{lemma}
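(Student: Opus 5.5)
The plan is to build the isomorphism by hand in three stages: identify which representations $x$ occur in $K$, identify the choices of flag over a fixed $x$, and then show that the regularity condition on $V_1$ removes exactly a linear subspace of those choices.

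\emph{Step 1: which $x$ occur.} Let $(x,f)\in K$. Then $M_x$ is an extension of $S_1$ by a regular module of dimension $(n-1,n-1)$. Since $S_1=I_1$ is preinjective, Lemma~\ref{le} shows that $M_x$ has no preprojective summand. Its dimension vector $(n,n-1)$ then forces
\[
M_x\cong I_{s'}\oplus H
\]
for a unique $s'\geq 1$, with $H$ regular of dimension $(n-s',n-s')$. In particular $x$ is one of the points appearing in the $s'$-th piece of the first disjoint union.

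Conversely, suppose $(x,V_1)$ has $M_{x|_{V_1}}\cong I_{s'}$ and regular quotient $H$. Because $\Ext^1(H,I_{s'})=0$ by Lemma~\ref{le}, the extension splits, so $M_x\cong I_{s'}\oplus H$. Moreover $\Hom(I_{s'},H)=0$, and by the dimension count no other $I_{s'}$-submodule has regular quotient. Hence such a submodule $V_1$ is uniquely determined by $x$. So the first union is just the set of such $x$, each with its canonical preinjective submodule.

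\emph{Step 2: the $\mathbb P_{n-1}$ factor.} Any flag $0\subset V_1\subset\bfV$ with quotient $S_1$ is $x$-stable automatically. Indeed $S_1$ is concentrated at vertex $1$ and both arrows go from $1$ to $2$. So such flags are exactly the hyperplanes $W\subset\bfV_1$ (with $V_1=W\oplus\bfV_2$). Equivalently, they are the points of $\bbP(\Hom(M_x,S_1))=\bbP(\bfV_1^*)\cong\mathbb P_{n-1}$, and this identification is constant in $x$. This gives an embedding of $K$ into the first union $\times\,\mathbb P_{n-1}$, sending $(x,V_1)$ to $(x,\text{canonical }I_{s'}\subset M_x,\ W)$.

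\emph{Step 3: the removed closed set.} Write
\[
\Hom(M_x,S_1)=\Hom(I_{s'},S_1)\oplus\Hom(H,S_1),
\]
using the canonical regular submodule $H\subset M_x$ and the induced decomposition of $\bfV_1^*$. The dimensions of the two summands are $s'$ and $n-s'$. I will show that the kernel of $\phi\colon M_x\to S_1$ is regular if and only if the component of $\phi$ on $I_{s'}$ is nonzero.

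If that component vanishes, then $\ker\phi\supset I_{s'}$, so the kernel has a preinjective summand and is not regular.

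If that component is nonzero, then $\ker\phi$ is an extension of a proper submodule of $I_{s'}$ and of $H$, of total dimension $(n-1,n-1)$. Such a kernel is regular: a preinjective summand would have to survive in it, but $\Hom(I_{s'},\ker\phi)$ is controlled by Lemma~\ref{le}. I will check this carefully using the dimension argument from the proof of Proposition~\ref{suan}, which shows that a submodule of dimension $(u,u)$ of a module without preprojective summands is regular.

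Consequently the bad locus is $\bbP(\Hom(H,S_1))\cong\mathbb P_{n-s'-1}$. It is linear, and hence closed, and it is exactly the second family in the statement.

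\emph{Main obstacle.} The hard step is the regularity criterion in Step 3, namely excluding a preinjective summand of $\ker\phi$ when $\phi|_{I_{s'}}\neq 0$. I would first try the defect: $\ker\phi$ has defect $0$, and since it is a submodule of $M_x$ it has no preprojective part. A module of defect $0$ with no preprojective summand cannot have a preinjective summand either, because preinjective summands have positive defect which would have to be offset by preprojective ones. If that works, the criterion reduces to showing that $\ker\phi$ has no preprojective summand. This holds because $\Hom(P_s,M_x)=0$ for any $P_s$ that could occur as a summand of $\ker\phi$, and this should follow from Lemma~\ref{le} once one checks $\Hom(P_s,I_{s'})$ and $\Hom(P_s,H)$ against a splitting.

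Finally, I need to verify that all of these identifications are algebraic, so that the bijection is an isomorphism of varieties. The canonical submodules vary algebraically on each stratum, since they are kernels of maps of constant rank.
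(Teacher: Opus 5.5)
Your decomposition is the paper's: $M_x$ has no preprojective summand, so $M_x\cong I_{s'}\oplus H$ with a canonical $I_{s'}$. The flags with quotient $S_1$ form $\bbP(\bfV_1^*)\cong\mathbb P_{n-1}$. The non-regular locus is the $\mathbb P_{n-s'-1}$ of hyperplanes containing the vertex-$1$ part of that $I_{s'}$. The paper only asserts this last identification ``with the topological structure given by Lemma~\ref{stra}''. Your criterion, that $\ker\phi$ is regular iff $\phi|_{I_{s'}}\neq 0$, is exactly what is needed and is true. However, the argument you propose for the ``if'' direction does not go through.

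Three of its ingredients are false.

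First, a submodule of a module without preprojective summands can have preprojective summands. Having no preprojective summand passes to quotients and extensions, since $\Hom(-,P_s)$ is left exact and $\Hom(H_r,P_s)=\Hom(I_{s'},P_s)=0$. It does not pass to submodules: for instance $P_1=S_2$ embeds in every module with nonzero vertex-$2$ space.

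Second, $\Hom(P_s,M_x)$ is not zero; for example $\Hom(P_1,M_x)\cong (M_x)_2=k^{n-1}$. Lemma~\ref{le} only kills maps \emph{into} preprojectives.

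Third, the argument in the proof of Proposition~\ref{suan} concerns submodules of a \emph{regular} module. The version you quote, for modules without preprojective summands, is contradicted by your own bad case. If $\phi|_{I_{s'}}=0$ (possible as soon as $H\neq 0$), then $\ker\phi=I_{s'}\oplus\ker(\phi|_H)$ is a submodule of $M_x$ of dimension $(n-1,n-1)$ that is not regular.

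The repair is to run the defect argument in the other direction and rule out \emph{preinjective} summands, a property that does descend to submodules. Let $A\subset M_x$ be the canonical $I_{s'}$, and suppose $I_t$ is a summand of $\ker\phi$.
\begin{itemize}
\item Since $\Hom(I_t,H)=0$ for any regular complement $H$ of $A$ (Lemma~\ref{le}), the embedding $I_t\hookrightarrow M_x$ lands in $A\cap\ker\phi=\ker(\phi|_A)$.
\item Because $\phi|_A\neq 0$, this kernel has dimension $(s'-1,s'-1)$, which forces $t<s'$.
\item Then $\Hom(I_t,I_{s'})=0$ by Lemma~\ref{le}, a contradiction.
\end{itemize}
So $\ker\phi$ has no preinjective summand. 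Since it has defect $0$, it has no preprojective summand either, i.e.\ it is regular.

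A smaller point: only $A$ is canonical, not $H$, because maps $H\to I_{s'}$ can be nonzero. So describe the bad locus as $\bbP(\Hom(M_x/A,S_1))$, the hyperplanes of $\bfV_1$ containing $A_1$. That description is intrinsic and varies algebraically with $x$.
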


\begin{proof}
We have
\[
K=\left\{(x,f)\ \middle|\ 
\begin{aligned}
&x\in \bfE_{\bfV},\ f=(0=V_0\subset V_1\subset V_2=\bfV)\text{ is a filtration of }\bbN I\text{-graded spaces},\\
&x(V_j)\subset V_j,\ M_{x|_{V_2/V_1}}\cong S_1,\ M_{x|_{V_1}}\text{ is a direct sum of indecomposable regular modules}
\end{aligned}\right\},
\]
which is isomorphic to the complement in
\[
\left\{(x,f)\ \middle|\ 
\begin{aligned}
&x\in \bfE_{\bfV},\ f=(0=V_0\subset V_1\subset V_2=\bfV)\text{ is a filtration of }\bbN I\text{-graded spaces},\\
&x(V_j)\subset V_j,\ M_{x|_{V_2/V_1}}\cong S_1
\end{aligned}\right\}
\]
of the closed subvariety
\[
\left\{(x,f)\ \middle|\ 
\begin{aligned}
&x\in \bfE_{\bfV},\ f=(0=V_0\subset V_1\subset V_2=\bfV)\text{ is a filtration of }\bbN I\text{-graded spaces},\\
&x(V_j)\subset V_j,\ M_{x|_{V_2/V_1}}\cong S_1,\ M_{x|_{V_1}}\text{ is not a direct sum of indecomposable regular modules}
\end{aligned}\right\}.
\]
For any short exact sequence of representations of $Q$,
\[
0\rightarrow H_r\rightarrow M\rightarrow S_1\rightarrow 0,
\]
where $H_r$ is a direct sum of indecomposable regular modules, we have that $M$ has no preprojective direct summands.

Thus, $K$ is isomorphic to the complement in
\[
\left\{x\ \middle|\ x\in \bfE_{\bfV},\ M_x\text{ has no preprojective direct summands}\right\}\times \mathbb{P}_{n-1}
\]
of the closed subvariety
\[
K'=\left\{(x,f)\ \middle|\ 
\begin{aligned}
&x\in \bfE_{\bfV},\ M_x\text{ has no preprojective direct summands},\\
&f=(0=V_0\subset V_1\subset V_2=\bfV)\text{ is a filtration of }\bbN I\text{-graded spaces},\\
&x(V_j)\subset V_j,\ M_{x|_{V_2/V_1}}\cong S_1,\\
&M_{x|_{V_1}}\cong P_{s}\oplus H_r\oplus I_{s'},\ H_r\text{ is a direct sum of indecomposable regular modules}
\end{aligned}\right\}.
\]

Moreover, with the topological structure given by Lemma \ref{stra}, $K'$ is isomorphic to
\[
\bigsqcup_{s'}\left\{(x,f)\ \middle|\ 
\begin{aligned}
&x\in \bfE_{\bfV},\ M_x\text{ has no preprojective direct summands},\\&f=(0=V_0\subset V_1\subset V_2\subset V_3=\bfV)\text{ is a filtration of }\bbN I\text{-graded spaces},\\
&x(V_j)\subset V_j,\ M_{x|_{V_3/V_2}}\cong S_1,\\
& M_{x|_{V_1}}\cong I_{s'}
\end{aligned}\right\},
\]
which is, in turn, isomorphic to
\[
\bigsqcup_{s'}\left\{(x,f)\ \middle|\ 
\begin{aligned}
&x\in \bfE_{\bfV},\ M_x\text{ has no preprojective direct summands},\\&f=(0=V_0\subset V_1\subset V_2=\bfV)\text{ is a filtration of }\bbN I\text{-graded spaces},\\
&x(V_j)\subset V_j,\ M_{x|_{V_1}}\cong I_{s'}
\end{aligned}\right\}\times \bbP_{n-s'-1}.
\]
Computing the dimension of $V_2/V_1$, the variety above is isomorphic to
\[
\bigsqcup_{s'}\left\{(x,f)\ \middle|\ 
\begin{aligned}
&x\in \bfE_{\bfV},\ M_x\text{ has no preprojective direct summands},\\&f=(0=V_0\subset V_1\subset V_2=\bfV)\text{ is a filtration of }\bbN I\text{-graded spaces},\\
&x(V_j)\subset V_j,\ M_{x|_{V_1}}\cong I_{s'},\ M_{x|_{V_2/V_1}}\text{ is a direct sum of indecomposable regular modules}
\end{aligned}\right\}\times \bbP_{n-s'-1}.
\]

Therefore, $K$ is isomorphic to the complement in
\[
\bigsqcup_{s'}\left\{(x,f)\ \middle|\ 
\begin{aligned}
&x\in \bfE_{\bfV},\ f=(0=V_0\subset V_1\subset V_2=\bfV)\text{ is a filtration of }\bbN I\text{-graded spaces},\\
&M_{x|_{V_1}}\cong I_{s'},\ M_{x|_{V_2/V_1}}\text{ is a direct sum of indecomposable regular modules}
\end{aligned}\right\}\times \mathbb{P}_{n-1}
\]
of the closed subvariety
\[
\bigsqcup_{s'}\left\{(x,f)\ \middle|\ 
\begin{aligned}
&x\in \bfE_{\bfV},\ f=(0=V_0\subset V_1\subset V_2=\bfV)\text{ is a filtration of }\bbN I\text{-graded spaces},\\
&x(V_j)\subset V_j,\ M_{x|_{V_1}}\cong I_{s'},\ M_{x|_{V_2/V_1}}\text{ is a direct sum of indecomposable regular modules}
\end{aligned}\right\}\times \bbP_{n-s'-1}.
\]

Finally,
\[
\left\{(x,f)\ \middle|\ 
\begin{aligned}
&x\in \bfE_{\bfV},\ f=(0=V_0\subset V_1\subset V_2=\bfV)\text{ is a filtration of }\bbN I\text{-graded spaces},\\
&M_{x|_{V_1}}\cong I_{s'},\ M_{x|_{V_2/V_1}}\text{ is a direct sum of indecomposable regular modules}
\end{aligned}\right\}
\]
can be identified with 
\[
\left\{x\ \middle|\ 
\begin{aligned}
&x\in \bfE_{\bfV},\\
&M_x\cong I_{s'}\oplus H_r,\ \text{where } H_r\text{ is a direct sum of indecomposable regular modules}
\end{aligned}\right\}.
\]
\end{proof}
For $\bfV$ with dimension $(n,n),$ we denote \[W_{I_{s'},(n-s')}:=
\left\{x\ \middle|\ 
\begin{aligned}
&x\in \bfE_{\bfV},\\
&M_x\cong I_{s'}\oplus H_r,\ \text{where } H_r\text{ is a direct sum of indecomposable regular modules}
\end{aligned}\right\}.
\]

\begin{lemma}\label{2.4}
Let $\bfV$ be an $\bbN I$-graded space $\bfV$ with $\dim\bfV=(n-1,n)$, the variety
\[
K_s=\left\{(x,f)\ \middle|\ 
\begin{aligned}
&x\in \bfE_{\bfV},\ f=(0=V_0\subset V_1\subset V_2=\bfV)\text{ is a filtration of }\bbN I\text{-graded spaces},\\
&x(V_j)\subset V_j,\ M_{x|_{V_2/V_1}}\cong I_{s},\ M_{x|_{V_1}}\text{ is a direct sum of indecomposable regular modules}
\end{aligned}\right\}
\]
is isomorphic to $\bigsqcup_{t\ge s}\bigl(W'_{I_{t},(n-t)}-W''_{I_{t},(n-t)}\bigr)$. The right-hand side is a stratification, and $W''_{I_{t},(n-t)}$ is closed in $W'_{I_{t},(n-t)}$. Both $W''_{I_{t},(n-t)}$ and $W'_{I_{t},(n-t)}$ admit locally trivial fibrations over $W_{I_{t},(n-t)}$, and both fibers are projective spaces.
\end{lemma}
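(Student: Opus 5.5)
The plan is to follow the pattern of the previous lemma (the one describing $K$ for $\dim\bfV=(n,n-1)$): first understand the possible middle terms $M_x$, and then describe the fibres of the first projection $(x,f)\mapsto x$ as open parts of projective spaces.

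\textbf{Step 1: the middle terms.} Take $(x,f)\in K_s$. Then $M_x$ sits in a short exact sequence
\[
0\to H\to M_x\to I_s\to 0
\]
with $H$ a direct sum of indecomposable regular modules. By Lemma~\ref{le}, $\Hom(H,P_{s''})=0$ and $\Ext(P_{s''},H)=0$, so $M_x$ has no preprojective summand. Comparing defects with dimension vectors then forces $M_x\cong I_t\oplus H'$ with $H'$ regular. Since $I_t$ must map onto $I_s$ modulo the regular part, and $\Hom(I_t,I_{s})=0$ for $t<s$ by Lemma~\ref{le}, we get $t\ge s$. Hence the image of $K_s$ in $\bfE_{\bfV}$ is $\bigsqcup_{t\ge s}W_{I_t,(n-t)}$, suitably read in the relevant dimension vector. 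This union is a stratification by Lemma~\ref{stra}, which gives the index set of the decomposition.

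\textbf{Step 2: the fibres.} Fix $x$ with $M_x\cong I_t\oplus H'$. The fibre of $K_s$ over $x$ is the set of subrepresentations $V_1\subset M_x$ with $M_x/V_1\cong I_s$ and $V_1$ regular. Giving such a $V_1$ is the same as giving a surjection $M_x\to I_s$ up to $\Aut(I_s)=k^*$ (here $I_s$ is a brick). Surjectivity onto the brick $I_s$ is an open condition on $\Hom(M_x,I_s)=\Hom(I_t,I_s)\oplus\Hom(H',I_s)$, so the fibre sits inside $W'_{I_t,(n-t)}:=\bbP(\Hom(M_x,I_s))$. These projective spaces glue into a locally trivial fibration over $W_{I_t,(n-t)}$, because $\dim\Hom(M_x,I_s)$ is constant along the stratum (it is computed from the Euler form and $\Ext(M_x,I_s)$, which is constant on the stratum). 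Inside it I take
\[
W''_{I_t,(n-t)}:=\{\text{maps whose kernel is not regular}\}\cup\{\text{non-surjective maps}\}.
\]

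\textbf{Step 3 (main obstacle): $W''$ is a projective subbundle.} One must show that $W''_{I_t,(n-t)}$ is itself a projective-space bundle, namely the projectivisation of a linear subspace of $\Hom(M_x,I_s)$ of constant dimension. My approach is to prove that a map $\phi$ fails (its kernel has a preinjective summand, or $\phi$ is not onto) exactly when $\phi$ factors through a fixed proper preinjective quotient determined by the regular part, i.e.\ when $\phi$ lies in the subspace of maps $I_t\oplus H'\to I_s$ whose restriction to $H'$ lands in the socle-type piece coming from $\Hom(I_{t'},I_s)$ with $t'>s$. I would establish this by the same device as in the previous lemma: refine the flag to a three-step flag, use Lemma~\ref{stra} to identify the bad locus with flags whose bottom piece is $I_{s'}$, and compute dimensions. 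This step is the least routine, because linearity of the bad locus must be checked.

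\textbf{Step 4: conclusion.} Closedness of $W''$ in $W'$ follows because non-regularity of the kernel is a closed condition; it is the preimage of a union of lower strata, again by Lemma~\ref{stra}. Assembling the strata over $t\ge s$ then gives $K_s\cong\bigsqcup_{t\ge s}\bigl(W'_{I_t,(n-t)}-W''_{I_t,(n-t)}\bigr)$, which is the statement of the lemma.
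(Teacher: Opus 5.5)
Your Steps 1, 2 and 4 follow the paper's outline: the fibre over $x$ with $M_x\cong I_t\oplus H'$ is an open piece of $\bbP(\Hom(M_x,I_s))$, which is a projective bundle since $\Ext(M_x,I_s)=0$. Step 3, which you correctly single out as the crux, has a genuine gap. The description of the bad locus you propose is wrong, and linearity is never established.

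The failure condition is not a constraint on the $H'$-component of $\phi$. For instance, take $\phi=(\phi_1,0)$ with $0\neq\phi_1\in\Hom(I_t,I_s)$; this space has dimension $t-s+1\geq 1$. Then $\phi|_{H'}=0$, so $\phi$ satisfies any linear condition you impose on the $H'$-component. Yet $\phi$ is good: it is surjective, and $\ker\phi=\ker\phi_1\oplus H'$ is regular. Indeed, $\ker\phi_1$ has dimension vector $(t-s,t-s)$, hence defect $0$. It cannot contain a preinjective summand $I_{t'}$, since that would force $t'=t$, which is impossible by dimension. Conversely, $(0,\psi)$ with $\psi\colon H'\to I_s$ surjective is always bad, whatever $\psi$ is, because $I_t\subset\ker(0,\psi)$. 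So bad maps are those factoring through the \emph{regular} quotient $M_x/I_t\cong H'$, not through a preinjective quotient.

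The missing idea is to control the $I_t$-component, in three steps.
\begin{enumerate}
\item The summand $I_t$ is the unique subrepresentation of $M_x$ isomorphic to $I_t$, because $\Hom(I_t,H')=0$ and $\Hom(I_t,I_t)=k$. So it is canonical.
\item Suppose a surjection $\phi$ has non-regular kernel. The kernel has defect $0$, so it has a preinjective summand $I_{t'}$. This summand maps injectively into $I_t$, since $\Hom(I_{t'},H')=0$. Lemma~\ref{le} ($\Hom(I_{t'},I_t)\neq 0$ gives $t'\geq t$) and dimensions ($t'\leq t$) force $t'=t$, so $\phi|_{I_t}=0$. The converse is obvious.
\item Every nonzero map $I_t\to I_s$ is surjective. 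Its image is a quotient of a preinjective, hence preinjective by Lemma~\ref{le}. It is also a submodule of $I_s$, and $I_{s'}\hookrightarrow I_s$ forces $s'=s$, so the image is $I_s$. Thus non-surjective maps also vanish on $I_t$.
\end{enumerate}
Consequently, the bad locus in the fibre is exactly $\bbP(\Hom(M_x/I_t,I_s))\cong\bbP(\Hom(H',I_s))$. This is a linear subspace of constant dimension $\langle\dim H',\dim I_s\rangle$, again because $\Ext(H',I_s)=0$. The fibre of $K_s$ is then $\bbP^{a+t-s}\setminus\bbP^{a-1}$ when $\dim H'=(a,a)$.

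This is the paper's argument. It identifies the bad flags with three-step flags whose bottom piece is the $I_t$-summand, and fixes that piece as $\bfV'$, working over $\bfG_{\bfV}\times_P(\cdot)$. It then uses fact (3) to replace ``surjective'' by ``nonzero'' in both $\bbP(\Hom(M_x,I))$ and $\bbP(\Hom(H_r,I))$.
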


\begin{proof}
The variety $K_s$ is isomorphic to the complement in
\[
\left\{(x,f)\ \middle|\ 
\begin{aligned}
&x\in \bfE_{\bfV},\ f=(0=V_0\subset V_1\subset V_2=\bfV)\text{ is a filtration of }\bbN I\text{-graded spaces},\\
&x(V_j)\subset V_j,\ M_{x|_{V_2/V_1}}\cong I_{s}
\end{aligned}\right\}
\]
of the closed subvariety
\[
\left\{(x,f)\ \middle|\ 
\begin{aligned}
&x\in \bfE_{\bfV},\ f=(0=V_0\subset V_1\subset V_2=\bfV)\text{ is a filtration of }\bbN I\text{-graded spaces},\\
&x(V_j)\subset V_j,\ M_{x|_{V_2/V_1}}\cong I_{s},\ M_{x|_{V_1}}\text{ is not a direct sum of indecomposable regular modules}
\end{aligned}\right\}.
\]
By Lemma~\ref{le}, any extension of $I_{s}$ by a regular module must be of the form a direct sum of an indecomposable preinjective representation $I_{t}$, where $t\ge s$, and a regular representation.

Therefore, $K_s$ is isomorphic to the complement in
\[
\tilde{V}=\bigsqcup_{t\ge s}\left\{(x,f)\ \middle|\ 
\begin{aligned}
&M_x\cong I_{t}\oplus H_r,\ f=(0=V_0\subset V_1\subset V_2=\bfV)\text{ is a filtration of }\bbN I\text{-graded spaces},\\
&x(V_j)\subset V_j,\ M_{x|_{V_2/V_1}}\cong I_{s},\ H_r\text{ is a direct sum of indecomposable regular modules}
\end{aligned}\right\}
\]
of the closed subvariety
\[
V=\bigsqcup_{t\ge s}\left\{(x,f)\ \middle|\ 
\begin{aligned}
&M_x\cong I_{t}\oplus H_r,\ f=(0=V_0\subset V_1\subset V_2=\bfV)\text{ is a filtration of }\bbN I\text{-graded spaces},\\
&x(V_j)\subset V_j,\ M_{x|_{V_2/V_1}}\cong I_{s},\\
&M_{x|_{V_1}}\cong P_{u}\oplus H_r\oplus I_{t},\ 1\le u\le n-s-t+1,\\&H_r\text{ is a direct sum of indecomposable regular modules}
\end{aligned}\right\}.
\]
We know by Lemma \ref{stra}, fixed $t'\ge s,$  \[
\bigsqcup_{t'\ge t\ge s}\left\{(x,f)\ \middle|\ 
\begin{aligned}
&M_x\cong I_{t}\oplus H_r,\ f=(0=V_0\subset V_1\subset V_2=\bfV)\text{ is a filtration of }\bbN I\text{-graded spaces},\\
&x(V_j)\subset V_j,\ M_{x|_{V_2/V_1}}\cong I_{s},\\
&M_{x|_{V_1}}\cong P_{u}\oplus H_r\oplus I_{t},\ 1\le u\le n-s-t+1,\ H_r\text{ is a direct sum of indecomposable regular modules}
\end{aligned}\right\}
\]
is a closed subvariety in $V.$ And \[
\bigsqcup_{t'\ge t\ge s}\left\{(x,f)\ \middle|\ 
\begin{aligned}
&M_x\cong I_{t}\oplus H_r,\ f=(0=V_0\subset V_1\subset V_2=\bfV)\text{ is a filtration of }\bbN I\text{-graded spaces},\\
&x(V_j)\subset V_j,\ M_{x|_{V_2/V_1}}\cong I_{s},\ H_r\text{ is a direct sum of indecomposable regular modules}
\end{aligned}\right\}
\] is a closed subvariety in $\tilde{V}.$
Moreover,
\[
\left\{(x,f)\ \middle|\ 
\begin{aligned}
&M_x\cong I_{t}\oplus H_r,\ f=(0=V_0\subset V_1\subset V_2=\bfV)\text{ is a filtration of }\bbN I\text{-graded spaces},\\
&x(V_j)\subset V_j,\ M_{x|_{V_2/V_1}}\cong I_{s},\\
&M_{x|_{V_1}}\cong P_{u}\oplus H_r\oplus I_{t},\ 1\le u\le n-s-t+1,\ H_r\text{ is a direct sum of indecomposable regular modules}
\end{aligned}\right\}
\]
is isomorphic to
\[
\left\{(x,f)\ \middle|\ 
\begin{aligned}
&M_x\cong I_{t}\oplus H_r,\ f=(0=V_0\subset V_1\subset V_2\subset V_3=\bfV)\text{ is a filtration of }\bbN I\text{-graded spaces},\\
&x(V_j)\subset V_j,\ M_{x|_{V_3/V_2}}\cong I_{s},\ M_{x|_{V_1}}\cong I_{t},\\
& M_{x|_{V_2/V_1}}\cong P_{u}\oplus H_r,\ 1\le u\le n-s-t+1,\ H_r\text{ is a direct sum of indecomposable regular modules}
\end{aligned}\right\},
\]
which is also isomorphic to
\[
\left\{(x,f)\ \middle|\ 
\begin{aligned}
&M_x\cong I_{t}\oplus H_r,\ f=(0=V_0\subset V_1\subset V_2\subset V_3=\bfV)\text{ is a filtration of }\bbN I\text{-graded spaces},\\
&x(V_j)\subset V_j,\ M_{x|_{V_3/V_2}}\cong I_{s},\\
&M_{x|_{V_1}}\cong I_{t},\ M_{x|_{V_3/V_1}}\cong H_r,\ H_r\text{ is a direct sum of indecomposable regular modules}
\end{aligned}\right\}.
\]

Fix an $I$-graded subspace $\bfV'\subset \bfV$ with dimension $(t-1,t)$, and let $P\subset \bfG_{\bfV}$ be its stabilizer. Then the variety above is isomorphic to
\[
\bfG_V\times_{P}\left\{(x,f)\ \middle|\ 
\begin{aligned}
&x\in \bfE_{\bfV},\ f=(0=V_0\subset V_1=\bfV'\subset V_2\subset V_3=\bfV)\text{ is a filtration of }\bbN I\text{-graded spaces},\\
&x(V_j)\subset V_j,\ M_{x|_{V_3/V_2}}\cong I_{s},\ M_{x|_{V_1}}\cong I_{t},\ M_{x|_{V_3/V_1}}\cong H_r,\\
&H_r\text{ is a direct sum of indecomposable regular modules}
\end{aligned}\right\}.
\]
Now fix a representation $I=(x_I,\bfV_I)\cong I_{s}$.

Finally, since
\[
\left\{(x,f)\ \middle|\ 
\begin{aligned}
&M_x\cong I_{t}\oplus H_r,\ f=(0=V_0\subset V_1\subset V_2=\bfV)\text{ is a filtration of }\bbN I\text{-graded spaces},\\
&x(V_j)\subset V_j,\ M_{x|_{V_2/V_1}}\cong I_{s},\ H_r\text{ is a direct sum of indecomposable regular modules}
\end{aligned}\right\}
\]
is isomorphic to
\[
\left\{(x,g)\ \middle|\ 
\begin{aligned}
&M_x\cong I_{t}\oplus H_r,\ g\in \Hom_{kQ}(M_x,I)\text{ is surjective}\\
&H_r\text{ is a direct sum of indecomposable regular modules}
\end{aligned}\right\}/k^*,
\]
And for
\[
\bfG_V\times_{P}\left\{(x,f)\ \middle|\ 
\begin{aligned}
&x\in \bfE_{\bfV},\ f=(0=V_0\subset V_1=\bfV'\subset V_2\subset V_3=\bfV)\text{ is a filtration of }\bbN I\text{-graded spaces},\\
&x(V_j)\subset V_j,\ M_{x|_{V_3/V_2}}\cong I_{s},\ M_{x|_{V_1}}\cong I_{t},\ M_{x|_{V_3/V_1}}\cong H_r,\\
&H_r\text{ is a direct sum of indecomposable regular modules}
\end{aligned}\right\},
\]
we consider
\[\left\{(x,f)\ \middle|\ 
\begin{aligned}
&x\in \bfE_{\bfV},\ f=(0=V_0\subset V_1=\bfV'\subset V_2\subset V_3=\bfV)\text{ is a filtration of }\bbN I\text{-graded spaces},\\
&x(V_j)\subset V_j,\ M_{x|_{V_3/V_2}}\cong I_{s},\ M_{x|_{V_1}}\cong I_{t},\ M_{x|_{V_3/V_1}}\cong H_r,\\
&H_r\text{ is a direct sum of indecomposable regular modules}
\end{aligned}\right\}\] is isomorphic to \[
\left\{(x,g)\ \middle|\ 
\begin{aligned}
&x\in \bfE_{\bfV},\ M_x\cong H_r\oplus I_t,M_{x|_{\bfV'}}\cong I_t,\ g\in \Hom_{kQ}(H_r,I)\text{ is surjective},\\
&H_r\text{ is a direct sum of indecomposable regular modules}
\end{aligned}\right\}/k^*.
\]
Since the complement of \[
\left\{(x,g)\ \middle|\ 
\begin{aligned}
&x\in \bfE_{\bfV},\ M_x\cong H_r\oplus I_t,M_{x|_{\bfV'}}\cong I_t,\ g\in \Hom_{kQ}(H_r,I)\text{ is surjective}\\
&H_r\text{ is a direct sum of indecomposable regular modules}
\end{aligned}\right\}/k^*.
\] in \[
\left\{(x,g)\ \middle|\ 
\begin{aligned}
&x\in \bfE_{\bfV},\ M_x\cong I_{t}\oplus H_r,\ M_{x|_{\bfV'}}\cong I_t,\ g\in \Hom_{kQ}(M_x,I)\text{ is surjective}\\
&H_r\text{ is a direct sum of indecomposable regular modules}
\end{aligned}\right\}/k^*,
\] is the same as the complement of \[
\left\{(x,g)\ \middle|\ 
\begin{aligned}
&x\in \bfE_{\bfV},\ M_x\cong H_r\oplus I_t,M_{x|_{\bfV'}}\cong I_t,\ 0\not=g\in \Hom_{kQ}(H_r,I),\\
&H_r\text{ is a direct sum of indecomposable regular modules}
\end{aligned}\right\}/k^*
\] in \[
\left\{(x,g)\ \middle|\ 
\begin{aligned}
&x\in \bfE_{\bfV},\ M_x\cong I_{t}\oplus H_r,M_{x|_{\bfV'}}\cong I_t,\ 0\not=g\in \Hom_{kQ}(M_x,I),\\
&H_r\text{ is a direct sum of indecomposable regular modules}
\end{aligned}\right\}/k^*.
\]
Since
\[
\left\{(x,g)\ \middle|\ 
\begin{aligned}
&x\in \bfE_{\bfV},\ M_x\cong I_{t}\oplus H_r,M_{x|_{\bfV'}}\cong I_t, \ g\in \Hom_{kQ}(M_x,I),\\
&H_r\text{ is a direct sum of indecomposable regular modules}
\end{aligned}\right\}
\]
is the kernel of a morphism of vector bundles on $W_{I_{t},(n-t)}$ with constant rank for any $x\in W_{I_{t},(n-t)}$, it follows that it admits a locally trivial fibration over $W_{I_{t},(n-t)}$. Hence
\[
\left\{(x,g)\ \middle|\ 
\begin{aligned}
&x\in \bfE_{\bfV},\ M_x\cong I_{t}\oplus H_r,\ 0\not=g\in \Hom_{kQ}(M_x,I),\\
&H_r\text{ is a direct sum of indecomposable regular modules}
\end{aligned}\right\}/k^*
\]
also admits a locally trivial fibration over $W_{I_{t},(n-t)},$ whose fiber is a projective space. For \[
\left\{(x,g)\ \middle|\ 
\begin{aligned}
&x\in \bfE_{\bfV},\ M_x\cong H_r\oplus I_t,M_{x|_{\bfV'}}\cong I_t,\ 0\not=g\in \Hom_{kQ}(H_r,I),\\
&H_r\text{ is a direct sum of indecomposable regular modules}
\end{aligned}\right\}/k^*
\] we have the same result, thus applying $\bfG_{\bfV}\times_P$, we have that \[
\bfG_V\times_{P}\left\{(x,f)\ \middle|\ 
\begin{aligned}
&x\in \bfE_{\bfV},\ f=(0=V_0\subset V_1=\bfV'\subset V_2\subset V_3=\bfV)\text{ is a filtration of }\bbN I\text{-graded spaces},\\
&x(V_j)\subset V_j,\ M_{x|_{V_3/V_2}}\cong I_{s},\ M_{x|_{V_1}}\cong I_{t},\ M_{x|_{V_3/V_1}}\cong H_r,\\
&H_r\text{ is a direct sum of indecomposable regular modules}
\end{aligned}\right\}.
\] admits a locally trivial fibration over $W_{I_{t},(n-t)}.$ In fact, the fiber is a projective space.
This completes the proof.
\end{proof}
It is straightforward to see that $W'_{I_{t},(n-t)}$ and $W''_{I_{t},(n-t)}$ are both irreducible.

By induction, for $\underline{\lambda}=(\lambda_1,\ldots,\lambda_k)$ with $\sum_{j=1}^k \lambda_j=n$, $\lambda_i>0,$ we denote by $L_{\underline{\lambda}}$ the variety
\[
\left\{(x,f)\ \middle|\ 
\begin{aligned}
&x\in X(0,n),\ f=(0=V_0\subset V_1\subset V_2\subset \cdots \subset V_n=\bfV)\text{ is a filtration of }\bbN I\text{-graded spaces},\ x(V_i)\subset V_i,\\
&\dim V_i/V_{i-1}=(\lambda_i,\lambda_i),\ 
M_{x|_{V_i/V_{i-1}}}\text{ is a direct sum of indecomposable regular modules},\ i=1,\ldots,n
\end{aligned}\right\}\times \bbP_{\lambda_k-1}.
\]
We introduce an order by declaring $\underline{\lambda}\le \underline{\lambda}'$ if and only if $\underline{\lambda}$ is a refinement of $\underline{\lambda}'$. We then define a new variety inductively: $\widetilde{L_{\underline{\lambda}}}$ is the complement in $L_{\underline{\lambda}}$ of the closed subvariety
\[
\bigsqcup_{{\lambda_i}=\lambda'_{i},\ i\le k-1,\ \lambda_k=\lambda'_{k-1}+\lambda'_{k}} \widetilde{L_{\underline{\lambda}'}},
\]
where the closed embedding is defined by~\eqref{1} in the proof of the following lemma. Moreover, both $L_{\underline{\lambda}}$ and $\widetilde{L_{\underline{\lambda}}}$ are irreducible.

\begin{lemma}
For $\nu=(n,n)\in \bbN I$ with $\dim \bfV=\nu$, $s=n,$ the variety
\[
L=\left\{(x,f)\ \middle|\ 
\begin{aligned}
&x\in \bfE_{\bfV},\ f=(0=V_0\subset V_1\subset V_2=\bfV)\text{ is a filtration of }\bbN I\text{-graded spaces},\\
&x(V_i)\subset V_i,\ M_{x|_{V_1}}\cong P_{s},\ M_{x|_{V_2/V_1}}\cong S_1
\end{aligned}\right\}
\]
is isomorphic to $K\sqcup L_{(n)}$, where $K$ is closed in this variety and admits a locally trivial bundle over the orbit of $P_{s}\oplus S_1$.
\end{lemma}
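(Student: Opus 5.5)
The plan is to stratify $L$ by the isomorphism class of the middle term $M_x$ of the short exact sequence $0\to P_n\to M_x\to S_1\to 0$, where $\dim M_x=(n,n)$. The claim then splits into two pieces: the split stratum $M_x\cong P_n\oplus S_1$, which will be $K$, and the stratum where $M_x$ is regular, which will be identified with $L_{(n)}$. The key point is to show that these are the only two possibilities, with no third case.

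\emph{Step 1: classifying the middle term.} I would use the defect, which is additive on short exact sequences. Since $\partial(P_n)=-1$ and $\partial(S_1)=\partial(I_1)=1$, we get $\partial(M_x)=0$. Write $M_x\cong P_\beta\oplus H_r\oplus I_\beta$. If $M_x$ is not regular, it has both a preprojective and a preinjective summand. Consider a preprojective summand $P_u$.
\begin{itemize}
\item If $u\geq n$, dimension counting forces $u=n$. The remaining summand then has dimension $(1,0)$, so $M_x\cong P_n\oplus S_1$.
\item If $u<n$, look at the composite $P_u\to M_x\to S_1$. If it is zero, then $P_u$ lies in the image of $P_n$. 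Using Lemma~\ref{le} ($\Hom(P_{s'},P_s)=0$ for $s'>s$, and no maps from regular or preinjective modules to preprojectives), I would show that $P_n\to M_x$ followed by the projection onto $P_u$ is a split epimorphism from an indecomposable, a contradiction. If it is nonzero, a dimension and defect count on the remaining summands should rule it out as well.
\end{itemize}
This case analysis is the step I expect to be the main obstacle. The cleanest route I would try first is to compare with Lemma~\ref{stra}: the orbit of $P_n\oplus S_1$ is the unique stratum $X(\alpha',m')$ in $\bfE_\bfV$ that admits a subrepresentation $\cong P_n$ with quotient $S_1$ other than $X(0,n)$.

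\emph{Step 2: the regular stratum.} For regular $M=M_x$ of dimension $(n,n)$, the Euler form gives $\langle P_n,M\rangle=n(n-1)+n^2-2n(n-1)=n$. Since $\Ext^1(P_n,H_r)=0$ by Lemma~\ref{le}, we get $\dim\Hom(P_n,M)=n$. I would then show that every nonzero map $P_n\to M$ is injective with cokernel $\cong S_1$. Its image $U$ is a quotient of $P_n$, hence preprojective or zero, and $\dim U\leq (n-1,n)$. A proper preprojective submodule would have to be some $P_u$ with $u<n$, which the same dimension and defect argument excludes.

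Consequently, the fibre of $L\to\bfE_\bfV$ over a point of this stratum is $\bbP(\Hom(P_n,M))\cong\bbP_{n-1}$. Since $\dim\Hom(P_n,M)$ is constant in $x$, this is a projective bundle; equivalently, it is the projectivisation of the kernel of a constant-rank map of vector bundles, exactly as in Lemma~\ref{2.4}. Trivialising it over $X(0,n)$ gives the identification with $L_{(n)}=X(0,n)\times\bbP_{n-1}$, matching the definition of $L_{(n)}$ for the one-part partition (the flag in $L_{(n)}$ is trivial).

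\emph{Step 3: the split stratum $K$.} For $M=P_n\oplus S_1$, the subrepresentations $U\cong P_n$ with $M/U\cong S_1$ are exactly the graphs of morphisms $P_n\to S_1$. By Lemma~\ref{le}, $\Ext^1(P_n,S_1)=0$, and the Euler form gives $\dim\Hom(P_n,S_1)=n-1$. So the fibre over each point of $\cO_{P_n\oplus S_1}$ is an affine space $\mathbb{A}^{n-1}$. This fibration is $\bfG_\bfV$-equivariant over a single orbit, hence locally trivial.

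Finally, $K$ is closed in $L$. The stratum $X(0,n)$ is open in $\bfE_\bfV$ (it is open in $\overline{X(0,n)}=\bfE_\bfV$), so its preimage in $L$ is open, and $K$ is the complement of that preimage. This gives $L\cong K\sqcup L_{(n)}$ with $K$ closed and locally trivial over the orbit of $P_n\oplus S_1$, as stated.
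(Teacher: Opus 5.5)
There is a genuine gap in Step~2. You claim that every nonzero map $P_n\to M$, with $M$ regular of dimension $(n,n)$, is injective with cokernel $S_1$, because its image ``is a quotient of $P_n$, hence preprojective or zero''. Quotients of preprojective modules need not be preprojective, and the claim is false for $n\geq 2$.

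A defect count shows what actually happens. For a nonzero $f\colon P_n\to M$, the kernel is preprojective and the image has no preinjective summand. So if $f$ is not injective, then $\ker f\cong P_a$ and $\im f$ is regular of dimension $(n-a,n-a)$. This case does occur. For $n=2$ and $M=H_a\oplus H_b$ with $H_a\not\cong H_b$ regular of dimension $(1,1)$, the map $(f_a,0)$ has image $H_a$ and kernel $P_1$. For $M=H_a\oplus H_a$, no map $P_2\to M$ is injective at all.

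The same point in terms of the flag: every hyperplane $W\subset\bfV_1$ gives a subrepresentation $V_1=W\oplus\bfV_2$ of dimension $(n-1,n)$. In general $M_{x|V_1}\cong P_{s''}\oplus H_r$ with $s''\leq n$, and only the hyperplanes with $s''=n$ lie in $L$. So the fibre of $L$ over $x\in X(0,n)$ is only the open subset of injective maps inside $\bbP(\Hom(P_n,M_x))\cong\bbP_{n-1}$. It varies with $x$ and can be empty. Hence the regular part of $L$ is a proper open subset of $L_{(n)}=X(0,n)\times\bbP_{n-1}$, not a $\bbP_{n-1}$-bundle over $X(0,n)$. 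The statement cannot hold with $L_{(n)}$ taken literally.

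The paper's proof handles exactly this locus. It embeds the regular part into $X(0,n)\times\bbP_{n-1}$ via the hyperplane defining $V_1$, and removes the closed locus $L'$ where $s''<n$. It identifies $L'$ with three-step flags stratified by $s''$, by splitting off the canonical regular submodule of $V_1$. By induction it concludes that the regular part is $\widetilde{L_{(n)}}$, an open subset of $L_{(n)}$. That is how the statement has to be read, and it is what you should aim to prove.

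Your Steps~1 and~3 are fine and agree with the paper: the split stratum $K$ has $\mathbb{A}^{n-1}$ fibres given by graphs of $\Hom(P_n,S_1)$, and $K$ is closed because $X(0,n)$ is open. In Step~1 the case $u<n$ is immediate: the component $P_n\to P_u$ vanishes by Lemma~\ref{le}, which would force $P_u$ to be a direct summand of the cokernel $S_1$.
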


\begin{proof}
If $0\rightarrow P_{s}\rightarrow M\rightarrow I_{s'}\rightarrow 0$ is a short exact sequence of representations of $Q$, then either $M\cong P_{s}\oplus I_{s'}$, or $M$ is a direct sum of regular indecomposable representations.

It is straightforward to see that
\[
\left\{(x,f)\ \middle|\ 
\begin{aligned}
&M_x\cong P_{s}\oplus S_1,\ f=(0=V_0\subset V_1\subset V_2=\bfV)\text{ is a filtration of }\bbN I\text{-graded spaces},\\
&x(V_i)\subset V_i,\ M_{x|_{V_1}}\cong P_{s},\ M_{x|_{V_2/V_1}}\cong S_1
\end{aligned}\right\}
\]
admits a locally trivial fibration over $\{x\in \bfE_{\bfV}\mid M_x\cong P_{s}\oplus S_1\}$, which is a closed subvariety of $L$. Thus it suffices to consider the case where $M$ is a direct sum of regular indecomposable representations.

We now give a more explicit description. 
\[
L=\left\{(x,f)\ \middle|\ 
\begin{aligned}
&x\in \bfE_{\bfV},\ f=(0=V_0\subset V_1\subset V_2=\bfV)\text{ is a filtration of }\bbN I\text{-graded spaces},\\
&x(V_i)\subset V_i,\ M_{x|_{V_1}}\cong P_{s},\ M_{x|_{V_2/V_1}}\cong S_1
\end{aligned}\right\},
\]
which is isomorphic to the complement in
\[
\left\{(x,f)\ \middle|\ 
\begin{aligned}
&x\in \bfE_{\bfV},\ f=(0=V_0\subset V_1\subset V_2=\bfV)\text{ is a filtration of }\bbN I\text{-graded spaces},\\
&x(V_i)\subset V_i,\ M_{x|_{V_2/V_1}}\cong S_1
\end{aligned}\right\}
\]
of the closed subvariety
\[
\left\{(x,f)\ \middle|\ 
\begin{aligned}
&x\in \bfE_{\bfV},\ f=(0=V_0\subset V_1\subset V_2=\bfV)\text{ is a filtration of }\bbN I\text{-graded spaces},\\
&x(V_i)\subset V_i,\ M_{x|_{V_1}}\not\cong P_{s},\ M_{x|_{V_2/V_1}}\cong S_1
\end{aligned}\right\}.
\]

After restricting to $x\in X(0,n)$, this is the complement of
\[
\left\{(x,f)\ \middle|\ 
\begin{aligned}
&x\in X(0,n),\ f=(0=V_0\subset V_1\subset V_2=\bfV)\text{ is a filtration of }\bbN I\text{-graded spaces},\\
&x(V_i)\subset V_i,\ M_{x|_{V_2/V_1}}\cong S_1
\end{aligned}\right\}
\]
of the closed subvariety
\[
\left\{(x,f)\ \middle|\ 
\begin{aligned}
&x\in X(0,n),\ f=(0=V_0\subset V_1\subset V_2=\bfV)\text{ is a filtration of }\bbN I\text{-graded spaces},\\
&x(V_i)\subset V_i,\ M_{x|_{V_1}}\not\cong P_{s},\ M_{x|_{V_2/V_1}}\cong S_1
\end{aligned}\right\}.
\]
The variety above is isomorphic to the complement in $\{(x,f)\mid x\in X(0,n)\}\times \bbP_{n-1}$ of the closed subvariety
\[
L'=\left\{(x,f)\ \middle|\ 
\begin{aligned}
&x\in X(0,n),\ f=(0=V_0\subset V_1\subset V_2=\bfV)\text{ is a filtration of }\bbN I\text{-graded spaces},\ x(V_i)\subset V_i,\\
&M_{x|_{V_1}}\cong P_{s''}\oplus H_{r_{n-s''}},\ s''<s,\ M_{x|_{V_2/V_1}}\cong S_1,\\
&H_{r_{n-s''}}\text{ is a direct sum of indecomposable regular modules}
\end{aligned}\right\},
\]
which is isomorphic to
\begin{equation}\label{1}
\bigsqcup_{s''<s}\left\{(x,f)\ \middle|\ 
\begin{aligned}
&x\in X(0,n),\ f=(0=V_0\subset V_1\subset V_2\subset V_3=\bfV)\text{ is a filtration of }\bbN I\text{-graded spaces},\ x(V_i)\subset V_i,\\
&M_{x|_{V_1}}\cong H_{r_{n-s''}},\ M_{x|_{V_2/V_1}}\cong P_{s''},\ M_{x|_{V_3/V_2}}\cong S_1,\\
&H_{r_{n-s''}}\text{ is a direct sum of indecomposable regular modules}
\end{aligned}\right\}.
\end{equation}
Fixed $s'<s,$
\begin{equation}
\bigsqcup_{s''<s'}\left\{(x,f)\ \middle|\ 
\begin{aligned}
&x\in X(0,n),\ f=(0=V_0\subset V_1\subset V_2\subset V_3=\bfV)\text{ is a filtration of }\bbN I\text{-graded spaces},\ x(V_i)\subset V_i,\\
&M_{x|_{V_1}}\cong H_{r_{n-s''}},\ M_{x|_{V_2/V_1}}\cong P_{s''},\ M_{x|_{V_3/V_2}}\cong S_1,\\
&H_{r_{n-s''}}\text{ is a direct sum of indecomposable regular modules}
\end{aligned}\right\}
\end{equation} is a closed subvariety of the variety above.
Then $L$ is isomorphic to $\widetilde{L_{(n)}}$, which is an open subset of $L_{(n)}$.
\end{proof}

\begin{lemma}\label{2.5}
For $\nu=(n,n)\in \bbN I$ with $\dim \bfV=\nu$ and $s+s'-1=n$, the variety
\[
L=\left\{(x,f)\ \middle|\ 
\begin{aligned}
&x\in \bfE_{\bfV},\ f=(0=V_0\subset V_1\subset V_2=\bfV)\text{ is a filtration of }\bbN I\text{-graded spaces},\\
&x(V_i)\subset V_i,\ M_{x|_{V_1}}\cong P_{s},\ M_{x|_{V_2/V_1}}\cong I_{s'}
\end{aligned}\right\}
\]
is isomorphic to $K\sqcup U$, where $K$ is closed in this variety and admits a locally trivial fibration over
\[
\left\{x\in \bfE_{\bfV}\ \middle|\ M_x\cong P_{s}\oplus I_{s'} \right\}.
\]
Moreover, for $\underline{\lambda}=(\lambda_1,\ldots,\lambda_k)$, there exists $W'_{0,\underline{\lambda}}$ which admits a locally trivial fibration with projective-space fiber over $W_{0,\underline{\lambda}}$, and there is a locally closed embedding
\[
\widetilde{W'_{0,\underline{\lambda'}}}\hookrightarrow {W'_{0,\underline{\lambda}}}
\quad\text{if }\ \lambda_i=\lambda'_i\ (i\le k-1),\ \lambda_k=\lambda'_k+\lambda'_{k+1}.
\]
Here $\widetilde{W'_{0,\underline{\lambda}}}$ is defined inductively by
\[
\widetilde{W'_{0,\underline{\lambda}}}
=
W'_{0,\underline{\lambda}}
-
\bigsqcup_{\lambda_i=\lambda'_i,\ i\le k-1,\ \lambda_k=\lambda'_k+\lambda'_{k+1}}
\widetilde{W'_{0,\underline{\lambda'}}},
\]
and $U=\widetilde{W'_{0,(n)}}$.
\end{lemma}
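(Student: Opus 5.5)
We follow the pattern of the two previous lemmas: analyse the middle term $M_x$ of the extension, split $L$ into a closed piece and an open piece, and identify the open piece by an inductive complement construction.

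\textbf{Step 1: extension dichotomy.} Since $\dim P_s+\dim I_{s'}=(s-1,s)+(s',s'-1)=(n,n)$, any short exact sequence $0\to P_s\to M\to I_{s'}\to 0$ is of one of two kinds. Either it splits, so $M\cong P_s\oplus I_{s'}$. Or it does not split; we claim $M$ is then regular. Indeed, by Lemma~\ref{le} a preinjective summand $I_u$ of $M$ maps to zero in $P_s$. A defect count then forces it to map isomorphically onto $I_{s'}$, which splits the sequence. Dually, a preprojective summand of $M$ is excluded. This is the same argument already used for $P_s$ and $S_1=I_1$ in the preceding lemma.

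\textbf{Step 2: the closed piece $K$.} Set $K=L\cap\pi^{-1}(\cO_{P_s\oplus I_{s'}})$. It is closed in $L$, because $\cO_{P_s\oplus I_{s'}}$ is the minimal stratum that can occur among points of $L$, by Lemma~\ref{stra}. The fibre of $K$ over $x$ is the set of subrepresentations $V_1\subset M_x$ with $V_1\cong P_s$ and $M_x/V_1\cong I_{s'}$. This set is the open subset of $\mathbb{P}\Hom(P_s,M_x)$ consisting of injective maps with the right cokernel. $\bfG_{\bfV}$ acts transitively on the orbit, so $K\cong \bfG_{\bfV}\times_{\operatorname{Stab}}(\text{fibre})$. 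Hence $K$ is a locally trivial fibration over $\cO_{P_s\oplus I_{s'}}$, and $U:=L\setminus K$ consists of the pairs with $x\in X(0,n)$.

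\textbf{Step 3: the open piece $U$.} We realise $U$ as in the previous lemmas. Let $W'_{0,\underline{\lambda}}$ be the variety of pairs $(x,g)$ with $x\in W_{0,\underline{\lambda}}$ (the flag variety with regular subquotients of dimensions $(\lambda_i,\lambda_i)$) and $0\neq g\in\Hom(M_{x|_{V_k/V_{k-1}}},I_{s'})$, taken up to $k^*$.

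1. $\Hom(H,I_{s'})$ has constant dimension $\lambda_k$ for $H$ regular of dimension $(\lambda_k,\lambda_k)$, since $\Ext(H,I_{s'})=0$ and the Euler form is constant. So this $\Hom$ space is the kernel of a constant-rank morphism of vector bundles, exactly as in Lemma~\ref{2.4}. Consequently $W'_{0,\underline{\lambda}}\to W_{0,\underline{\lambda}}$ is a $\bbP_{\lambda_k-1}$-bundle.
2. A point of $U$ is $x$ regular with a quotient map onto $I_{s'}$ whose kernel is $P_s$. The kernel of a nonzero $g:M_x\to I_{s'}$ fails to be $P_s$ precisely when it is $P_{s''}\oplus H$ with $s''<s$, which means $g$ factors through a regular quotient of smaller dimension.
3. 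Refining the flag by that regular kernel gives the locally closed embeddings $\widetilde{W'_{0,\underline{\lambda}'}}\hookrightarrow W'_{0,\underline{\lambda}}$ for $\lambda_k=\lambda'_k+\lambda'_{k+1}$. This mirrors \eqref{1}.
4. Closedness of these loci follows from Lemma~\ref{stra} applied to $s''<s$. Descending induction on refinements then gives $U\cong\widetilde{W'_{0,(n)}}$.

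\textbf{Main obstacle.} The hard part is the bookkeeping in Step 3. One must check that the loci where $\ker g$ acquires a smaller preprojective summand are exactly the images of the $\widetilde{W'_{0,\underline{\lambda}'}}$, that each point is counted exactly once, and that these images are closed. Closedness uses the degeneration order (Lemma~\ref{stra}). Exactness of the count should follow from uniqueness of the maximal regular subrepresentation through which $g$ factors; we would check that uniqueness first.
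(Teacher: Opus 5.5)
\textbf{Overall.} Your route is the paper's route. You split off the closed piece over $\cO_{P_s\oplus I_{s'}}$, realise the regular part as nonzero maps $g$ to $I_{s'}$ modulo scalars, and get the $\bbP_{\lambda_k-1}$-bundle from the constant rank of $\Hom(H,I_{s'})$. You then peel off strata by refining the flag. Steps 1, 2 and 3(1) are fine. The gap is in Step 3(2)--(4): you only reason about \emph{surjective} $g$, but $W'_{0,\underline{\lambda}}$ contains every nonzero $g$ up to scalar.

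\textbf{Non-surjective maps are present and behave differently.} They occur in $W'_{0,(n)}$ as soon as $s'\ge 2$. Compose a surjection of $M_x$ onto a regular simple $R$ of dimension $(1,1)$ with the nonzero map $R\to I_{s'}$. That $\Hom$ space is one-dimensional, and the map is injective because $\Hom(I_1,I_{s'})=0$.

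More generally, suppose $g$ is not surjective. Its image is a proper submodule of $I_{s'}$. It has no preprojective summand, being a quotient of a regular module. It has no preinjective summand $I_b$ either, since Lemma~\ref{le} forces $b\ge s'$ and hence $I_b=I_{s'}$. So the image is regular, and $\ker g$ is regular with no preprojective summand at all.

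\textbf{What this breaks.} Your claim that $\ker g\not\cong P_s$ exactly when $\ker g\cong P_{s''}\oplus H$ with $s''<s$ is false. The identity you propose to verify also fails: the images of the $\widetilde{W'_{0,\underline{\lambda}'}}$ are not just the loci where $\ker g$ acquires a smaller preprojective summand, because they contain every non-surjective $g$. For instance, the example above lies in the image of $\widetilde{W'_{0,(n-1,1)}}$, with $\bar g$ injective. Deeper in the induction, when $\lambda'_{k+1}\le s'-1$, the piece $\widetilde{W'_{0,\underline{\lambda}'}}$ consists \emph{entirely} of injective $\bar g$. So a description by preprojective summands of the kernel cannot serve as your inductive hypothesis.

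\textbf{The correct inductive hypothesis.} State it as: $\widetilde{W'_{0,\underline{\lambda}}}$ is the locus where $\ker g$ has no nonzero regular submodule.
\begin{itemize}
\item The regular part of $\ker g$ is canonical. It is the torsion part for the torsion pair (modules without preprojective summands, preprojective modules), since $\Hom(H\oplus I,P)=0$. This gives your ``counted once'' step, and it must be applied to all nonzero $g$, not only surjective ones.
\item An extension of a preprojective module by a regular one has exactly that regular submodule as torsion part. This identifies the stratum where the torsion part has dimension $(\mu,\mu)$, $\mu>0$, with $\widetilde{W'_{0,\underline{\lambda}'}}$ for $\lambda'_k=\mu$.
\end{itemize}

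\textbf{The extra dichotomy you are missing.} Let $g$ be a nonzero map on a regular block of dimension $(\ell,\ell)$ with preprojective kernel. If the kernel is nonzero, the image has positive defect. It therefore has a preinjective summand, so it equals $I_{s'}$. Consequently $g$ is injective when $\ell\le s'-1$, and surjective with kernel $P_{\ell-s'+1}$ when $\ell\ge s'$. Taking $\ell=n=s+s'-1$ gives $\widetilde{W'_{0,(n)}}=U$.

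This is exactly the paper's splitting of $\widetilde{{}^{pre}W'_{0,\underline{\lambda}}}$ into an ``injective, not surjective'' part and $\widetilde{W^{sur}_{0,\underline{\lambda}}}$, one of which is always empty. Likewise, closedness of the removed locus in $W'_{0,(n)}$ must also cover the non-surjective maps; surjectivity is an open rank condition. Lemma~\ref{stra} applied to $s''<s$ alone does not cover them.
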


\begin{proof}
If $0\rightarrow P_{s}\rightarrow M\rightarrow I_{s'}\rightarrow 0$ is a short exact sequence of representations of $Q$, then either $M\cong P_{s}\oplus I_{s'}$, or $M$ is a direct sum of regular indecomposable representations.

It is straightforward to see that
\[
\left\{(x,f)\ \middle|\ 
\begin{aligned}
&M_x\cong P_{s}\oplus I_{s'},\ f=(0=V_0\subset V_1\subset V_2=\bfV)\text{ is a filtration of }\bbN I\text{-graded spaces},\\
&x(V_i)\subset V_i,\ M_{x|_{V_1}}\cong P_{s},\ M_{x|_{V_2/V_1}}\cong I_{s'}
\end{aligned}\right\}
\]
admits a locally trivial fibration over $\{x\in \bfE_{\bfV}\mid M_x\cong P_{s}\oplus I_{s'}\}$, which is a closed subvariety of $L$. Hence it suffices to consider the case where $M$ is a direct sum of regular indecomposable representations.

In this case, $U$ is isomorphic to the complement in
\[
\left\{(x,f)\ \middle|\ 
\begin{aligned}
&x\in X(0,n),\ f=(0=V_0\subset V_1\subset V_2=\bfV)\text{ is a filtration of }\bbN I\text{-graded spaces},\\
&x(V_i)\subset V_i,\ M_{x|_{V_2/V_1}}\cong I_{s'}
\end{aligned}\right\}
\]
of the closed subvariety
\[
\left\{(x,f)\ \middle|\ 
\begin{aligned}
&x\in X(0,n),\ f=(0=V_0\subset V_1\subset V_2=\bfV)\text{ is a filtration of }\bbN I\text{-graded spaces},\\
&x(V_i)\subset V_i,\ M_{x|_{V_1}}\not\cong P_{s},\ M_{x|_{V_2/V_1}}\cong I_{s'}
\end{aligned}\right\}.
\]
The closed subvariety above is isomorphic to
\begin{equation}\label{eqq}
\bigsqcup_{s''<s}\left\{(x,f)\ \middle|\ 
\begin{aligned}
&x\in X(0,n),\ f=(0=V_0\subset V_1\subset V_2\subset V_3=\bfV)\text{ is a filtration of }\bbN I\text{-graded spaces},\ x(V_i)\subset V_i,\\
&M_{x|_{V_1}}\cong H_r,\ M_{x|_{V_2/V_1}}\cong P_{s''},\ M_{x|_{V_3/V_2}}\cong I_{s'},\\
&H_r\text{ is a direct sum of indecomposable regular modules}
\end{aligned}\right\}.
\end{equation}
Moreover, fixed $s'<s,$
\begin{equation}
\bigsqcup_{s''<s'}\left\{(x,f)\ \middle|\ 
\begin{aligned}
&x\in X(0,n),\ f=(0=V_0\subset V_1\subset V_2\subset V_3=\bfV)\text{ is a filtration of }\bbN I\text{-graded spaces},\ x(V_i)\subset V_i,\\
&M_{x|_{V_1}}\cong H_r,\ M_{x|_{V_2/V_1}}\cong P_{s''},\ M_{x|_{V_3/V_2}}\cong I_{s'},\\
&H_r\text{ is a direct sum of indecomposable regular modules}
\end{aligned}\right\}
\end{equation}
is a closed subvariety of the variety above.
For $\underline{\lambda}=(\lambda_1,\ldots,\lambda_k)$, define the variety ${}^{pre}W'_{0,\underline{\lambda}}$ by
\[
\left\{(x,f,g)\ \middle|\ 
\begin{aligned}
&x\in X(0,n),\ f=(0=V_0\subset V_1\subset \cdots \subset V_{k-1}\subset V_k=\bfV)\text{ is a filtration of }\bbN I\text{-graded spaces},\\
&x(V_i)\subset V_i,\ M_{x|_{V_i/V_{i-1}}}\cong H_{r_i},\ 0\not=g\in \Hom(M_{x|_{V_k/V_{k-1}}},I_{s'}),\\
&H_{r_i}\text{ is a direct sum of indecomposable regular modules, }\dim(H_{r_i})=(\lambda_i,\lambda_i),\ i=1,\ldots,k
\end{aligned}\right\}.
\]
The variety $W^{sur}_{0,\underline{\lambda}}$ is defined as
\[
\left\{(x,f,g)\ \middle|\ 
\begin{aligned}
&x\in X(0,n),\ f=(0=V_0\subset V_1\subset \cdots \subset V_{k-1}\subset V_k=\bfV)\text{ is a filtration of }\bbN I\text{-graded spaces},\\
&x(V_i)\subset V_i,\ M_{x|_{V_i/V_{i-1}}}\cong H_{r_i},\ 0\not=g\in \Hom(M_{x|_{V_k/V_{k-1}}},I_{s'})\text{ is surjective},\\
&H_{r_i}\text{ is a direct sum of indecomposable regular modules, }\dim(M_{r_i})=(\lambda_i,\lambda_i),\ i=1,\ldots,k
\end{aligned}\right\}.
\]
Inductively, set the open subset of ${}^{pre}W'_{0,\underline{\lambda}}$
\[
\widetilde{{}^{pre}W'_{0,\underline{\lambda}}}
=
{}^{pre}W'_{0,\underline{\lambda}}
-
\bigsqcup_{\lambda_i=\lambda'_{i},\ i\le k-1,\ \lambda_k=\lambda'_{k+1}+\lambda'_{k}}
\widetilde{{}^{pre}W'_{0,\underline{\lambda'}}},
\]
and the open subset of $W^{sur}_{0,\underline{\lambda}}$
\[
\widetilde{W^{sur}_{0,\underline{\lambda}}}
=
W^{sur}_{0,\underline{\lambda}}
-
\bigsqcup_{\lambda_i=\lambda'_{i},\ i\le k-1,\ \lambda_k=\lambda'_{k+1}+\lambda'_{k}}
\widetilde{W^{sur}_{0,\underline{\lambda'}}},
\]
where the embeddings and the stratification come from the dimension of the kernel of map $g$ and equation~\ref{eqq} with Lemma~\ref{stra}.

By induction, we obtain \[\begin{aligned}
\widetilde{{}^{pre}W'_{0,\underline{\lambda}}}=&
\left\{(x,f,g)\ \middle|\ 
\begin{aligned}
&x\in \bfE_{\bfV},\ f=(0=V_0\subset V_1\subset \cdots \subset V_{k-1}\subset V_k=\bfV)\text{ is a filtration of }\bbN I\text{-graded spaces},\\
&x(V_i)\subset V_i,\ x(V')\subset V',\ M_{x|_{V_i/V_{i-1}}}\cong H_{r_i},\ \\
&0\not=g\in \Hom(M_{x|_{V_k/V_{k-1}}},I_{s'})\text{ is injective not surjective},\\
&H_{r_i}\text{ is a direct sum of indecomposable regular modules of dimension }(\lambda_i,\lambda_i),\ i=1,\ldots,k
\end{aligned}\right\}\\
\bigcup& \widetilde{W^{sur}_{0,\underline{\lambda}}}.\end{aligned}
\]
In fact, one of the two sets above is empty.

Indeed, when $\underline{\lambda}=(n),$ there does not exist $g\in \Hom(x,I_s)$ injective for $x\in X(0,n),$ $\widetilde{W^{sur}_{0,(n)}}=\widetilde{{}^{pre}W'_{0,(n)}}.$ And $\widetilde{W^{sur}_{0,(n)}}/k^*=U.$

By the same argument as in Lemma~\ref{2.4}, $W'_{0,\underline{\lambda}}:={}^{pre}W'_{0,\underline{\lambda}}/k^*$ admits a locally trivial fibration with projective fiber over $W_{0,\underline{\lambda}}$. Moreover, $W'_{0,\underline{\lambda}}$ is irreducible.
\end{proof}

\begin{lemma}\cite[Theorem 2.2.1]{Jouanolou}\label{aiq}
Let $f:X\to Y$ be a morphism of varieties over $\bbF_q$. If $f$ is a locally trivial fibration with fiber $\bbP^n$, then $\mathbf{H}^i(f_!\overline{\bbQ_l}|_{X})$ is a constant sheaf up to Tate twist.
\end{lemma}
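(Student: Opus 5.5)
The statement to prove is Lemma~\ref{aiq}: for $f:X\to Y$ a locally trivial fibration over $\bbF_q$ with fiber $\bbP^n$, each $\mathbf{H}^i(f_!\overline{\bbQ_l}|_X)$ is a constant sheaf up to Tate twist.

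I would first reduce to a local computation. Since $f$ is locally trivial, choose a cover $\{U_a\}$ of $Y$ (Zariski, or étale, depending on how "locally trivial'' is read) with isomorphisms $f^{-1}(U_a)\cong U_a\times\bbP^n$ over $U_a$. Proper base change gives $(f_!\overline{\bbQ_l})|_{U_a}\cong \mathrm{pr}_{1!}\overline{\bbQ_l}$. The Künneth formula then identifies $\mathbf{H}^i$ of this with the constant sheaf $H^i_c(\bbP^n_{\overline{\bbF_q}},\overline{\bbQ_l})\otimes\overline{\bbQ_l}_{U_a}$. This is $\overline{\bbQ_l}(-i/2)$ for $i$ even with $0\le i\le 2n$, and $0$ otherwise. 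So $\mathbf{H}^i(f_!\overline{\bbQ_l})$ is a lisse sheaf of rank one (or zero), and its stalks carry Frobenius acting by $q^{i/2}$.

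The main point, and the expected obstacle, is to pass from "locally constant with prescribed Frobenius weight'' to "globally constant up to twist'': the transition isomorphisms on overlaps $U_a\cap U_b$ could a priori produce a nontrivial rank-one local system, i.e.\ a character of $\pi_1$. I would rule this out with the first Chern class. Let $\xi\in H^2(X,\overline{\bbQ_l}(1))$ be $c_1$ of a relatively ample line bundle. If the fibration is only étale-locally trivial, take instead $c_1$ of the relative anticanonical bundle $\omega_{X/Y}^{-1}$, which restricts to $\cO(n+1)$ on fibers and so is always defined globally. Cup product with $\xi^{j}$ gives a global map $\overline{\bbQ_l}_Y(-j)\to R^{2j}f_*\overline{\bbQ_l}$, and $f$ is proper, so $f_!=f_*$. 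On stalks this map is the standard generator of $H^{2j}(\bbP^n)$ (up to the nonzero scalar $(n+1)^j$, invertible since $l\neq p$ can be chosen coprime to $n+1$, or simply because we work in characteristic zero coefficients). It is therefore an isomorphism of sheaves for $0\le j\le n$. Compatibility with the Weil structure is automatic, because Chern classes of line bundles defined over $\bbF_q$ are Frobenius-equivariant in the twisted group. This yields $\mathbf{H}^{2j}(f_!\overline{\bbQ_l})\cong\overline{\bbQ_l}(-j)$ and $\mathbf{H}^{\mathrm{odd}}=0$.

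In the applications (Lemma~\ref{2.4}, Lemma~\ref{2.5}) the projective-space fibrations are projectivizations $\bbP(\mathcal{E})$ of vector bundles obtained as kernels of constant-rank bundle maps. There $\cO_{\bbP(\mathcal{E})}(1)$ supplies $\xi$ directly, and the argument is exactly the projective bundle formula of \cite{Jouanolou}, which I would cite for the final identification.
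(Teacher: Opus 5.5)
Your proof is correct, but it takes a different route from the paper's. The paper works with the gluing cocycle. It notes that $\mathbf{H}^i(f_!\overline{\bbQ_l})$ is locally constant and that the trivializations $U\times\bbP^n$ are glued by $\operatorname{PGL}_{n+1}$-valued transition functions. Every $g\in\operatorname{PGL}_{n+1}$ satisfies $g^*\cO_{\bbP^n}(1)\cong\cO_{\bbP^n}(1)$, so it fixes the generator $h=c_1(\cO(1))$ of $H^*(\bbP^n,\overline{\bbQ_l})=\overline{\bbQ_l}[h]/(h^{n+1})$. Hence the gluing maps on cohomology are the identity, the local system is constant, and the Tate twist is attributed to the Galois action on $H^{2j}(\bbP^n)$.

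You instead take a global class $\xi=c_1(\mathcal{M})$, with $\mathcal{M}=\cO_{\bbP(\mathcal{E})}(1)$ or $\omega_{X/Y}^{-1}$. You send $\xi^j$ through the Leray edge map to a global section of $R^{2j}f_*\overline{\bbQ_l}(j)$ and check that it is a generator on every stalk. This is the projective bundle formula of the cited source.

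What your route buys is that the isomorphism $\overline{\bbQ_l}_Y(-j)\xrightarrow{\sim}R^{2j}f_*\overline{\bbQ_l}$ is written down over $\bbF_q$. The exact Weil structure, twist $(-j)$ included, therefore comes out directly. The paper has to infer it from geometric constancy plus a Frobenius argument it states only loosely. What the paper's route buys is that it needs no global line bundle: it uses only that automorphisms of $\bbP^n$ preserve the hyperplane class.

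Two small points. Your preliminary local K\"unneth step is redundant once the global map is a stalkwise isomorphism, since odd vanishing also follows stalkwise from proper base change. And $(n+1)^j$ is invertible simply because the coefficients lie in the characteristic-zero field $\overline{\bbQ_l}$; no choice of $l$ is needed.
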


\begin{proof}
First, $\mathbf{H}^i(f_!\overline{\bbQ_l}|_{X})$ is a local system. Hence it suffices to prove that the structure group acts as $\operatorname{Gal}(\overline{\bbF_q}/\bbF_q)$ on $\mathbf{H}^j(\bbP^n,\overline{\bbQ_l})$. Since we can choose an open covering $\cup U=Y$ and $f|_{U}$ as $U\times \bbP^n\rightarrow U.$ We just need to consider the gluing on these $U.$

We know that $\Aut(\bbP^n)\cong \operatorname{PGL}_{n+1}$ by~\cite[Section~0.5]{1994Geometric} and by~\cite[Proposition~10.1]{1980Etale} that
\[
\mathbf{H}^*(\bbP^n,\overline{\bbQ_l})=\overline{\bbQ_l}[h]/(h^{n+1}).
\]
Thus it suffices to show that $\operatorname{PGL}_{n+1}$ acts trivially on $h$. Since $h$ comes from the first Chern class of $\cO_{\bbP^n}(1)$, it is enough to note that for any $g\in \operatorname{PGL}_{n+1}$, the pullback preserves $\cO_{\bbP^n}(1)$. This proves the claim.
\end{proof}

\begin{lemma}\label{buh}
 Let $m,n,t$ be such that $\dim I_m+\dim I_n=2\dim I_t,$ variety $$A_{n,m}=\left\{(x,f)\middle|\begin{aligned}
 &x\in \bfE_{\bfV},M_x\cong I_t\oplus I_t, f=(0\subset V_1\subset V_2=\bfV)\\
 &x(V_i)\subset V_i, M_{x|_{V_1}}\cong I_m, M_{x|_{V_2/V_1}}\cong I_n.
 \end{aligned}\right\},$$ admits a stratification $R=\sqcup U_i,$ such that on each $U_i,$ $U_i\xrightarrow{p_i} \bfE_{\bfV},$ factors through $$A_{t,t}=\left\{(x,f)\middle|\begin{aligned}
 &x\in \bfE_{\bfV},M_x\cong I_t\oplus I_t, f=(0\subset V_1\subset V_2=\bfV)\\
 &x(V_i)\subset V_i, M_{x|_{V_1}}\cong I_t, M_{x|_{V_2/V_1}}\cong I_t.
 \end{aligned}\right\},$$
\end{lemma}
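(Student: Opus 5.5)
We have to stratify $A_{n,m}$ so that on each stratum the projection to $\bfE_{\bfV}$ factors through $A_{t,t}$. The natural way is to construct, for each point $(x,f)\in A_{n,m}$, a second filtration $f'=(0\subset V_1'\subset\bfV)$ with $M_{x|_{V_1'}}\cong I_t$ and $M_{x|_{\bfV/V_1'}}\cong I_t$. This must be done algebraically on pieces, and the stratification will be chosen so that $f\mapsto f'$ is a morphism on each piece.

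\textbf{Step 1: reduce to a statement about $\Hom(I_m,I_t)$.} By Lemma~\ref{le}, $\Hom(I_s,I_{s'})=0$ for $s<s'$. Since $M_x\cong I_t\oplus I_t$ and $V_1$ is a subrepresentation isomorphic to $I_m$, we need $\Hom(I_m,I_t)\neq 0$, so $m\geq t$. The dimension condition $\dim I_m+\dim I_n=2\dim I_t$ then gives $n=2t-m\le t$. The case $m=n=t$ is trivial (take the single stratum $A_{t,t}$ itself), so assume $m>t>n$.

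\textbf{Step 2: describe the embedding $V_1\hookrightarrow M_x$.} Fix $x$ and identify $M_x=I_t\otimes k^2$. An embedding $\iota\colon I_m\to I_t\oplus I_t$ is a pair $(\iota_1,\iota_2)$ with $\iota_i\in\Hom(I_m,I_t)$. The space $\Hom(I_m,I_t)$ has dimension $m-t+1$; it is identified, via the preinjective component and the $\bbP^1$-geometry of the Kronecker quiver, with binary forms of degree $m-t$. The pair $(\iota_1,\iota_2)$ spans a pencil. For each line $\ell\subset k^2$ we can consider the projection $M_x\to I_t\otimes(k^2/\ell)\cong I_t$ and the corresponding kernel $I_t\otimes \ell$. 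Set $V_1'=I_t\otimes\ell$, and require that $\ell$ be chosen compatibly with $V_1$.

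Concretely, take $\ell$ to be the line such that the composite $I_m\to I_t\otimes(k^2/\ell)$ is not injective, i.e.\ has maximal kernel. We stratify $A_{n,m}$ by the dimension of the kernel of the map $\Hom$-pencil map $k^2\to\Hom(I_m,I_t)$, $\ell\mapsto$ image under projection, and more finely by the isomorphism types of $V_1\cap V_1'$ and of $(V_1+V_1')/V_1'$. Lemma~\ref{stra} shows these strata are locally closed, because they are defined by rank conditions on vector bundles of Hom spaces. This is the same mechanism as in the proof of Lemma~\ref{2.4}: kernels of constant-rank bundle maps. On each stratum $U_i$ the line $\ell$, hence $V_1'$, depends algebraically on $(x,f)$. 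When the choice of $\ell$ is not unique, one refines further, for instance by a further incidence variety that projects isomorphically on an open piece. The map $U_i\to A_{t,t}$, $(x,f)\mapsto(x,f')$, then gives the required factorization of $p_i$.

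\textbf{Main obstacle.} The difficult step is showing that on each stratum the line $\ell$ is uniquely and regularly determined. Equivalently, one must show that the subvariety of $(x,f,\ell)$ satisfying the incidence condition maps isomorphically onto its image in $A_{n,m}$ after stratification. My first attempt would be to use $\bfG_{\bfV}$-equivariance to reduce to a fixed $x_0$ with $M_{x_0}=I_t\oplus I_t$. The problem then becomes one about the $\Aut(I_t\oplus I_t)=\operatorname{GL}_2$-orbits of subrepresentations $I_m\subset I_t\oplus I_t$. These are classified by pencils of binary forms, i.e.\ by the common zeros and the relative position of $\iota_1,\iota_2$. Finitely many orbit types should occur, and each type gives a stratum $U_i\cong\bfG_{\bfV}\times_{\operatorname{Stab}}(\cdot)$ on which a canonical $\ell$ (for instance one determined by a common root) exists. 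If no canonical $\ell$ exists on some stratum, I would instead pass to the finite or projective-bundle cover consisting of the pairs $(f,\ell)$. That cover still factors through $A_{t,t}$, and the statement only requires factorization on each stratum.
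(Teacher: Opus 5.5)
Your overall frame matches the paper's. You reduce to a fixed $x_0$ via $A_{n,m}\cong A'_{n,m}\times_H\bfG_{\bfV}$ with $H=\Aut(I_t\oplus I_t)\cong\operatorname{GL}_2$. You then produce on each stratum a second flag $V_1'$ depending algebraically on $(x,f)$, so that $(x,f)\mapsto(x,V_1')$ is the map to $A_{t,t}$. The gap is that the step meant to choose $\ell$ selects nothing, and none of your fallbacks supplies a rule.

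\textbf{The selection rule is vacuous.} For $m>t$ every nonzero map $I_m\to I_t$ is surjective. Its image is a quotient of $I_m$, hence preinjective by Lemma~\ref{le}. A preinjective submodule $I_s\subseteq I_t$ needs $s\ge t$ for a nonzero $\Hom$ and $s\le t$ for dimensions, so the image is all of $I_t$. Consequently, for every $\ell$ the composite $I_m\to I_t\otimes(k^2/\ell)$ is non-injective with kernel of dimension $(m-t,m-t)$, and ``maximal kernel'' picks out no line.

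\textbf{The stratification is trivial.} Injectivity of $\iota=(\iota_1,\iota_2)$ forces $\iota_1,\iota_2$ to be linearly independent; otherwise $\iota$ factors through a map $I_m\to I_t$. So your pencil map $k^2\to\Hom(I_m,I_t)$ is injective on all of $A_{n,m}$, and your first stratification has one piece. The refinement by the types of $V_1\cap V_1'$ presupposes the $V_1'$ you are trying to construct.

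\textbf{The fallbacks fail.}
\begin{itemize}
\item The $\operatorname{GL}_2$-orbits in $A'_{n,m}$ are the fibres of $[\iota_1,\iota_2]\mapsto\operatorname{span}(\iota_1,\iota_2)\in\mathrm{Gr}(2,\Hom(I_m,I_t))$. This is a $2(m-t-1)$-dimensional family, so there are infinitely many orbits once $m-t\ge 2$, not finitely many.
\item A common root of $\iota_1,\iota_2$ is a base point of the pencil and does not pick out a member of it. On $A_{n,m}$ there is none anyway: it would give a $(1,1)$-dimensional submodule of $I_m$ killed by $\iota$.
\item Passing to the cover $\{(x,f,\ell)\}$ gives no morphism \emph{from} a stratum of $A_{n,m}$. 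If that were allowed the lemma would be empty, since this incidence variety maps to $A_{t,t}$ tautologically.
\end{itemize}

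\textbf{What is missing} is a rule for choosing the member of the pencil that works uniformly over continuous families of pencils. The paper fixes a basis $e_1,\dots,e_{m-t+1}$ of $C=\Hom(I_m,I_t)$. It stratifies by the Schubert cell $U'_{a,b}$ of $\mathrm{Gr}(2,C)$ containing $\operatorname{span}(\iota_1,\iota_2)$. On each cell the echelon form $x=e_a+\sum_{j<a}\alpha_je_j$, $y=e_b+\sum_{j<b,\,j\neq a}\beta_je_j$ is a unique normalized basis. This is a section of the free $\operatorname{PGL}_2$-action, so $\tilde U_{a,b}\cong L'_{a,b}\times\operatorname{PGL}_2$ and $U_{a,b}\cong L'_{a,b}\times\bfG_{\bfV}/k^*$. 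The factorization is then $((x_0,f),g)\mapsto(g(x_0),g(\tilde V))$.

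In your own language, a canonical line already suffices. One option is the line $\operatorname{span}(\iota_1,\iota_2)\cap\langle e_1,\dots,e_a\rangle$ with $a$ minimal, stratifying by $a$. You can even avoid strata altogether:
\begin{itemize}
\item Fix an indecomposable regular $T$ of dimension $(1,1)$.
\item Then $\dim\Hom(T,I_s)=1$, and nonzero maps $T\to I_s$ are injective for $s\ge 2$. Here $t\ge 2$ automatically, since $t>n\ge 1$.
\item So $V_1$ contains a unique submodule $T'\cong T$, and exactly one $\ell$ satisfies $T'\subseteq I_t\otimes\ell$.
\end{itemize}
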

\begin{proof}
 Fix $x_0$ such that $M_{x_0}\cong I_t\oplus I_t,$ and denote its automorphism group by $H=\bfG l(2,k).$ Let $$A'_{n,m}=\left\{(x_0,f)\middle|\begin{aligned}
 & f=(0\subset V_1\subset V_2=\bfV)\\
 &x_0(V_i)\subset V_i, M_{x_0|_{V_1}}\cong I_m, M_{x_0|_{V_2/V_1}}\cong I_n.
 \end{aligned}\right\},$$ Then $A_{n,m}\cong A'_{n,m}\times_{H} \bfG_{\bfV}.$ Any $H$-stable stratification on $A'_{n,m}$ induces $A_{n,m}.$ Also fix $\tilde{V}\subset \bfV$ such that $M_{x_0|_{\tilde{V}}}\cong I_t.$

 For $0\rightarrow I_m\xrightarrow{f} I_t\oplus I_t,$ we have $\dim \operatorname{coker} f=\dim I_n,$ and $\operatorname{coker} f$ is preinjective, thus $\operatorname{coker} f\cong I_n.$
 Thus for $A'_{n,m},$ we only need to fix $x_m$ such that $M_{x_m}\cong I_m,$ then $A'_{n,m}\cong \{f\in \Hom_{kQ}(M_{x_m},M_{x_0})/k^*|\ker f=0\},$ which is an open subset in $(\Hom_{kQ}(M_{x_m},M_{x_0})-{0})/k^*.$ Since $(\Hom_{kQ}(M_{x_m},M_{x_0})-{0})/k^*\cong (\Hom_{kQ}(I_m,I_t)\oplus \Hom_{kQ}(I_m,I_t)-{0})/k^*,$ if we denote $C=\Hom_{kQ}(x_m,x_0),$ $A'_{n,m}$ is an open subvariety in $\{[a,b]\in C\oplus C-{0}/k^*|a,b\text{ are linear independent}\}$ with $H=\bfG l(2,k)$ action. $C$ is $m-t+1$ dimensional $k$-space.
 Because $C\oplus C\cong \Hom_k(k^2,C),$ in our case, we only consider the subvariety $\Hom^0_k(k^2,C)=\{f\in \Hom_k(k^2,C)|rank(f)=2\}.$ $\Hom^0_k(k^2,C)/k^*$ has a canonical map $\pi$ to Grassmannian $Gr(2,C).$ For the Grassmannian $Gr(2,C),$ we give a natural stratification.
 
 Actually, when we give $C$ a $k-$basis $e_1,\cdots,e_{m-t+1},$ the following locally closed subvariety, for $0<a<b\leq m-t+1,$ $U'_{a,b}=\{[x,y]\in Gr(2,C)|x=e_a+\sum_{j<a} \alpha_j e_j, y=e_b+\sum_{j'<b,\not=a}\beta_{j'}e_{j'}, \alpha_j,\beta_{j'}\in k\}$ which is a Schubert cell, gives the desired stratification. We use the stratification $\tilde{U}_{a,b}=\pi^{-1}(U'_{a,b})\cap A'_{n,m}.$ It is clearly stable under $H.$ For $0<a<b\leq m-t+1,$ denote the subvariety $\{[x,y]\in (C\oplus C-{0})/k^*|x=e_a+\sum_{j<a} \alpha_j e_j, y=e_b+\sum_{j'<b,\not=a}\beta_{j'}e_{j'}, \alpha_j,\beta_{j'}\in k\}$ as $L_{a,b}$ and $L'_{a,b}=A'_{n,m}\cap L_{a,b}.$ Then $\tilde{U}_{a,b}\cong L'_{a,b}\times PGl(2,k).$

 The stratification $\tilde{U}_{a,b}$ induces the stratification $U_{a,b}\cong L'_{a,b}\times (G_{\bfV}/k^*)$ on $A_{n,m}'.$ Then we define a morphism from $U_{a,b}$ to $A_{t,t}$ as follows.
 $$\begin{tikzcd}
{L'_{a,b}\times (G_{\bfV}/k^*)} \arrow[r, "{p_{a,b}}"] & { A_{t,t}} \\
{((x_0,f),g)} \arrow[r, maps to] & {(g(x_0),g(\tilde{V}))}
\end{tikzcd}$$
It is straightforward to see that $U_{a,b}\rightarrow \bfE_{\bfV}$ factors through $A_{t,t}$ by $U_{a,b}\xrightarrow{p_{a,b}}A_{t,t}\rightarrow \bfE_{\bfV}.$ This completes the proof. Indeed, we can choose $x_m,$ $(x_0, \tilde{V}),$ and $e_i$ above $\operatorname{Fr^r}-$fixed.
\end{proof}
Furthermore, $A_{t,t}$ is $\bfG_{\bfV}$-orbit with connected automorphism group.

For the following proof, we recall some properties of the derived category of sheaf complexes.

Denote the perverse $t$-structure truncation functors by ${}^p\tau^{\geq i}$ and ${}^p\tau^{\leq i}$, and the perverse cohomology functors by ${}^p\mathbf{H}^i(\ \cdot\ )$. 
\begin{lemma}\cite[Theorem 1.3.10]{Pramod-2021}\label{ar}
For a variety $X,$ $U\xrightarrow{j} X$ is an open embedding and $Z\xrightarrow{i} X$ is the complement of $U$. Then if $\mathcal{F}\in \cD_{c}^b(X)$, there is a distinguished triangle $$j_!j^*\mathcal{F}\rightarrow \mathcal{F}\rightarrow i_!i^*\mathcal{F}''\xrightarrow[]{+1}.$$

For a variety $X$, if $\mathcal{F}\in \cD_{c}^b(X)$, and there is a distinguished triangle
\[
\mathcal{F}'\rightarrow \mathcal{F}\rightarrow \mathcal{F}''\xrightarrow[]{+1}.
\]
Then there is a long exact sequence of perverse sheaves
\[
\cdots\rightarrow {}^p\mathbf{H}^r(\mathcal{F}')\rightarrow {}^p\mathbf{H}^r(\mathcal{F})\rightarrow {}^p\mathbf{H}^r(\mathcal{F}'')\xrightarrow[]{+1} {}^p\mathbf{H}^{r+1}(\mathcal{F}')\rightarrow \cdots.
\]
\end{lemma}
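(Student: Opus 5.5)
The plan is to treat the two assertions separately, since the first is a statement about the six-functor formalism for the pair $(U\xrightarrow{j}X,\ Z\xrightarrow{i}X)$ and the second is a formal property of any $t$-structure. Throughout, $i$ is a closed embedding, so $i_!=i_*$, and $j$ is an open embedding, so $j^*=j^!$. Read with $\mathcal{F}''=\mathcal{F}$ in the first triangle, this is the standard recollement input as in \cite{BBD}.

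For the triangle, I will start from the counit of the adjunction $j_!\dashv j^*$, namely $\epsilon: j_!j^*\mathcal{F}\to\mathcal{F}$. I will complete it to a distinguished triangle $j_!j^*\mathcal{F}\to\mathcal{F}\to C\xrightarrow{+1}$ and then identify $C$ with $i_!i^*\mathcal{F}$.

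\begin{itemize}
\item Applying $j^*$ and using $j^*j_!\cong\mathrm{id}$ shows that $j^*\epsilon$ is an isomorphism. Hence $j^*C=0$, so $C$ is supported on $Z$.
\item Since $C$ is supported on $Z$, the unit $C\to i_*i^*C$ is an isomorphism. This is checked stalkwise: on $U$ both sides vanish, and on $Z$ we use $i^*i_*\cong\mathrm{id}$.
\item Applying $i^*$ to the triangle and using $i^*j_!=0$ (proper base change, since $Z\times_X U=\emptyset$) gives $i^*\mathcal{F}\xrightarrow{\sim} i^*C$.
\item Combining the last two points yields $C\cong i_*i^*\mathcal{F}=i_!i^*\mathcal{F}$.
\end{itemize}

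I also want the map $\mathcal{F}\to i_!i^*\mathcal{F}$ in the triangle to be the adjunction unit. To see this, I will use that $\operatorname{Hom}(\mathcal{F},i_*i^*\mathcal{F})=\operatorname{Hom}(i^*\mathcal{F},i^*\mathcal{F})$, and that the composite $\mathcal{F}\to C\cong i_*i^*\mathcal{F}$ corresponds under this bijection to the identity of $i^*\mathcal{F}$. In the equivariant and mixed settings the same argument goes through verbatim, because all the functors involved, and the adjunctions between them, are compatible with the $\bfG_{\bfV}$-action and with Weil structures.

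For the long exact sequence, I will invoke that the perverse truncations $({}^p\cD^{\leq 0},{}^p\cD^{\geq 0})$ form a $t$-structure on $\cD^b_c(X)$ \cite{BBD}. The heart-valued functor ${}^p\mathbf{H}^0={}^p\tau^{\geq0}\,{}^p\tau^{\leq0}$ is then cohomological, which is the general fact \cite[1.3.6]{BBD} for any $t$-structure. Its proof uses the truncation triangles and the vanishing $\operatorname{Hom}({}^p\cD^{\leq0},{}^p\cD^{\geq1})=0$ to check exactness at each spot. Setting ${}^p\mathbf{H}^r={}^p\mathbf{H}^0\circ[r]$ and applying it to the rotations of $\mathcal{F}'\to\mathcal{F}\to\mathcal{F}''\xrightarrow{+1}$ gives the stated sequence; the connecting map comes from the morphism $\mathcal{F}''\to\mathcal{F}'[1]$.

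The main point requiring care is the identification of the cone with $i_!i^*\mathcal{F}$, where the vanishing $i^*j_!=0$ and the conservativity of the pair $(j^*,i^*)$ are used. The rest is formal.
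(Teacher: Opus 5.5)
Your proof is correct. The argument identifies the cone of the counit $j_!j^*\mathcal{F}\to\mathcal{F}$ with $i_!i^*\mathcal{F}$ using $i^*j_!=0$ and the conservativity of $(j^*,i^*)$, and derives the long exact sequence from the fact that ${}^p\mathbf{H}^0$ is cohomological for the perverse $t$-structure; this is exactly the standard proof behind the result the paper invokes, which gives no argument of its own beyond citing Achar's Theorem 1.3.10 (the long exact sequence being the general BBD fact for any $t$-structure), and you rightly read the stray $\mathcal{F}''$ in the first triangle as $\mathcal{F}$.
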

Thus, each simple constituent of ${}^p\mathbf{H}^r(\mathcal{F})$ is a simple constituent of either ${}^p\mathbf{H}^r(\mathcal{F}')$ or ${}^p\mathbf{H}^r(\mathcal{F}'')$.

Here we start the proof of Theorem~\ref{th}.

\begin{proof}
 We consider the sequence $\hat{\underline{\nu}}=(\hat{\nu_1},\cdots,\hat{\nu_n}),$ where $\sum \hat{\nu_i}=\nu,$ and for each $\hat{\nu}_i=((\hat{\nu}_i)_1,(\hat{\nu}_i)_2),$ we assume $|(\hat{\nu}_i)_1-(\hat{\nu}_i)_2)|\leq 1.$ We denote the set consisting of such sequences $\mathcal{S}q_{\nu}.$ Then there is a fixed $\bfE_{\hat{\nu}_i}^0\subset \bfE_{\bfV_i},\dim \bfV_i=\hat{\nu}_i$, where $\bfE_{\hat{\nu}_i}^0$ is the orbit of indecomposable preprojective representation, indecomposable preinjective representation, or the subvariety consisting of regular representations. 
 Consider $$S_{\underline{\hat{\nu}}}=\left\{(x\in \bfE_{\bfV},f)\middle|\begin{aligned}
 &f=(0=V_0\subset\cdots\subset V_{k-1}\subset V_k\subset V_{k+1}\subset\cdots\subset V_{n}=\bfV, \dim (V_i/V_{i-1})=\hat{\nu}_i,\\
 &x|_{V_i/V_{i-1}}\in \bfE_{\hat{\nu}_i}^0.
 \end{aligned}\right\}.$$
 Claim: To study the simple perverse constituents, assume we start with $\overline{\bbQ_l}|_{S_{\underline{\hat{\nu}}}}\boxtimes\overline{\bbQ_l}(\zeta)$ on some $S_{\underline{\hat{\nu}}}$, where $\overline{\bbQ_l}(\zeta)$ is pure on $\operatorname{Spec}(\bbF_q)$ and $\operatorname{egf}(\overline{\bbQ_l}(\zeta))=\overline{\bbq_l}.$ All the simple perverse constituents come from ${p_{\alpha',\underline{\lambda'}}}_!\overline{\bbQ_l}|_{W_{\alpha',\underline{\lambda'}}}\boxtimes\overline{\bbQ_l}(\zeta),$ for $\overline{\bbQ_l}(\zeta)$ above. $-\boxtimes \overline{\bbQ_l}(\zeta)$ is exact and perverse $t$-exact with $f_!(\mathcal{F})\boxtimes \overline{\bbQ_l}(\zeta)\cong (f_!\mathcal{F})\boxtimes \overline{\bbQ_l}(\zeta)$ and $g^*(\mathcal{F}\boxtimes \overline{\bbQ_l}(\zeta))\cong (g^*\mathcal{F})\boxtimes \overline{\bbQ_l}(\zeta).$ Thus we may start with the constant sheaf $\overline{\bbQ_l}|_{S_{\underline{\hat{\nu}}}}.$
 
 If there exists $\hat{\nu}_k=(s'-1,s')$ and $\hat{\nu}_{k+1}=(s,s-1)$ in $\underline{\hat{\nu}},$
 we fix $\bfV_{k-1}\subset \bfV_{k+1}$ with dimensions $\sum_{j=1}^{k-1}\hat{\nu}_j,\sum_{j=1}^{k+1}\hat{\nu}_j$, and the stabilizer $P\subset \bfG_{\bfV}$ of $\bfV_{k+1}$ and $\bfV_{k-1}.$ Also denote the unipotent radical of $P$ as $U.$

 For $$\left\{(x\in \bfE_{\bfV},f)\middle|\begin{aligned}
 &f=(0=V_0\subset \cdots\subset V_{k-1}\subset V_k\subset V_{k+1}\subset\cdots \subset V_{n}=\bfV), x(V_i)\subset V_i,\\
 &\dim (V_i/V_{i-1})=\hat{\nu}_i,x|_{V_i/V_{i-1}}\in \bfE_{\hat{\nu}_i}^0,i\not=k,k+1,\\
 &M_{x|_{V_{k}/V_{k-1}}}\cong P_{s'}, M_{x|_{V_{k+1}/V_k}}\cong I_{s}.
 \end{aligned}\right\}$$ is isomorphic to 
 \begin{equation}\label{con} 
 \bfG_{\bfV}\times_{P}\left\{(x\in \bfE_{\bfV},f)\middle|\begin{aligned}
 &f=(0=V_0\subset\cdots \subset\bfV_{k-1}=V_{k-1}\subset V_k\subset \bfV_{k+1}=V_{k+1}\subset\cdots\subset V_{n}=\bfV),x(V_i)\subset V_i,,\\
 &\dim (V_i/V_{i-1})=\hat{\nu}_i,x|_{V_i/V_{i-1}}\in \bfE_{\hat{\nu}_i}^0,i\not=k,k+1,\\
 &M_{x|_{V_{k}/\bfV_{k-1}}}\cong P_{s'}, M_{x|_{\bfV_{k+1}/V_k}}\cong I_{s},x(\bfV_{k+1})\subset \bfV_{k+1},x(\bfV_{k-1})\subset \bfV_{k-1}.
 \end{aligned}\right\}.\end{equation}
 We denote $$F=\left\{(x\in \bfE_{\bfV},f)\middle|\begin{aligned}
 &f=(0=V_0\subset\cdots \subset\bfV_{k-1}=V_{k-1}\subset V_k\subset \bfV_{k+1}=V_{k+1}\subset\cdots\subset V_{n}=\bfV),x(V_i)\subset V_i,\\
 &\dim (V_i/V_{i-1})=\hat{\nu}_i,x|_{V_i/V_{i-1}}\in \bfE_{\hat{\nu}_i}^0,i\not=k,k+1,\\
 &M_{x|_{V_{k}/\bfV_{k-1}}}\cong P_{s'}, M_{x|_{\bfV_{k+1}/V_k}}\cong I_{s},x(\bfV_{k+1})\subset \bfV_{k+1},x(\bfV_{k-1})\subset \bfV_{k-1},.
 \end{aligned}\right\}.$$ 
 If we denote
 
 $$E_1=\left\{(x\in \bfE_{\bfV_{k-1}},f)\middle|\begin{aligned}
 &f=(0=V_0\subset V_1\cdots \subset\bfV_{k-1}),x(V_i)\subset V_i,\\
 &x|_{V_i/V_{i-1}}\in \bfE_{\hat{\nu}_i}^0,
 \end{aligned}\right\},$$ $$E_2=\left\{(x\in \bfE_{\bfV_{k+1}/\bfV_{k-1}},f)\middle|\begin{aligned}
 &f=(0\subset V_k/\bfV_{k-1}\subset \bfV_{k+1}/\bfV_{k-1}),x(V_{k}/\bfV_{k-1})\subset V_{k}/\bfV_{k-1},,\\
 &M_{x|_{V_{k}/\bfV_{k-1}}}\cong P_{s'}, M_{x|_{\bfV_{k+1}/V_k}}\cong I_{s}.
 \end{aligned}\right\}$$ and $$E_3=\left\{(x\in \bfE_{\bfV/\bfV_{k+1}},f)\middle|\begin{aligned}
 &f=(0\subset V_{k+2}/\bfV_{k+1}\cdots\subset V_{n}/\bfV_{k+1}=\bfV/\bfV_{k+1}),x(V_i/\bfV_{k+1})\subset V_i/\bfV_{k+1},\\
 &\dim (V_i/V_{i-1})=\hat{\nu}_i,x|_{V_i/V_{i-1}}\in \bfE_{\hat{\nu}_i}^0,\\
 \end{aligned}\right\}$$
 
 Then for $$E_1\times E_2\times E_3\xleftarrow{p_1} F\times_{U}\bfG_{\bfV}\xrightarrow{p_2} F\times_{P}\bfG_{\bfV},$$ $p_2$ is principal bundle and $p_1$ is smooth with connected fibers. The stratification of $E_1\times E_2\times E_3$ factors through $p_2\circ p_1^{-1}$ gives a stratification on $F\times_{P}\bfG_{\bfV}.$
 
 By Lemma~\ref{2.5}, $E_2$ is isomorphic to $K\sqcup U$ and by Lemma~\ref{aiq}, the trivial fibration morphism $f$ with projective fiber satisfying that $\mathbf{H}^j(f_!\overline{\bbQ_l})$ is constant sheaf with Tate twist. We apply Lemma~\ref{2.5} to $E_2$ and by using $f_!$ for the corresponding projection map $f:W'_{0,\underline{\lambda}}\rightarrow W_{0,\underline{\lambda}}$. For the closed subset $K,$ any $\bfG_{\bfV}$-equivariant simple lisse sheaf over an orbit with integral weights has the form $\overline{\bbq_l}\boxtimes \overline{\bbQ_l}(\zeta),$ where $\overline{\bbQ_l}(\zeta)$ is pure on $\operatorname{Spec}(\bbF_q).$ This is because $\Aut_x$ is connected for any $x\in \bfE_{\bfV}.$ Thus for $K\xrightarrow{f}\mo,$ $\mathbf{H}^i(f_!\overline{\bbQ_l})$ is generated by $\overline{\bbQ_l}|_{\mo}\boxtimes\overline{\bbQ_l}(\zeta),$ for some $\zeta.$ 
 Denote orbit $\mo$ as the orbit of $P_s'\oplus I_s$ in $\bfE_{\bfV_{k+1}/\bfV_{k-1}},$ $$F_\mo=\left\{(x\in \bfE_{\bfV},f)\middle|\begin{aligned}
 &f=(0=V_0\subset\cdots \subset\bfV_{k-1}=V_{k-1}\subset V_{k+1}=\bfV_{k+1}\subset\cdots\subset V_{n}=\bfV),x(V_i)\subset V_i,\\
 &\dim (V_i/V_{i-1})=\hat{\nu}_i,x|_{V_i/V_{i-1}}\in \bfE_{\hat{\nu}_i}^0,i\not=k,k+1,\\
 &M_{x|_{\bfV_{k+1}/\bfV_{k-1}}}\cong I_{s}\oplus P_{s'}.
 \end{aligned}\right\},$$
 $$F_K=\left\{(x\in \bfE_{\bfV},f)\middle|\begin{aligned}
 &f=(0=V_0\subset\cdots \subset\bfV_{k-1}=V_{k-1}\subset V_k\subset \bfV_{k+1}=V_{k+1}\subset\cdots\subset V_{n}=\bfV),x(V_i)\subset V_i,,\\
 &\dim (V_i/V_{i-1})=\hat{\nu}_i,x|_{V_i/V_{i-1}}\in \bfE_{\hat{\nu}_i}^0,i\not=k,k+1,\\
 &M_{x|_{\bfV_{k+1}/\bfV_{k-1}}}\cong I_{s}\oplus P_{s'},M_{x|_{V_{k}/\bfV_{k-1}}}\cong P_{s'}, M_{x|_{\bfV_{k+1}/V_k}}\cong I_{s}.
 \end{aligned}\right\},$$
 Consider 
 $$\begin{tikzcd}
E_1\times K\times E_3 \arrow[d, "\pi_1"] & F_K\times_U \bfG_{\bfV} \arrow[l, "p_1"'] \arrow[r, "p_2"] \arrow[d] & F_K\times_P \bfG_{\bfV} \arrow[d, "\pi_2"] \\
E_1\times \mo\times E_3 & F_{\mo}\times_U \bfG_{\bfV} \arrow[l, "p_1"'] \arrow[r, "p_2"] & F_{\mo}\times_P \bfG_{\bfV} 
\end{tikzcd}$$ we have that ${\pi_2}_!{p_2}_{\flat}p_1^*(\overline{\bbQ_l})\cong {p_2}_{\flat}p_1^*{\pi_1}_!(\overline{\bbQ_l}),$ hence $\mathbf{H}^i({\pi_2}_!{p_2}_{\flat}p_1^*(\overline{\bbQ_l}))\cong \mathbf{H}^i({p_2}_{\flat}p_1^*{\pi_1}_!(\overline{\bbQ_l}))\cong {p_2}_{\flat}p_1^*\mathbf{H}^i({\pi_1}_!(\overline{\bbQ_l})).$ We already have that $\mathbf{H}^i({\pi_1}_!(\overline{\bbQ_l}))$ is generated by $\overline{\bbQ_l}|_{E_1\times \mo\times E_3}\boxtimes\overline{\bbQ_l}(\zeta),$ for some $\zeta.$ Furthermore, $p_1^*$ and ${p_2}_{\flat}$ are exact functors, thus by Lemma~\ref{ar} $\mathbf{H}^i({p_2}_{\flat}p_1^*{\pi_1}_!(\overline{\bbQ_l}))$ is generated by $\overline{\bbQ_l}|_{F_{\mo}\times_P \bfG_{\bfV}}\boxtimes\overline{\bbQ_l}(\zeta),$ for some $\zeta.$ 

If we denote 
$$F_{W_{0,\underline{\lambda}}}=\left\{(x\in \bfE_{\bfV},f)\middle|\begin{aligned}
 &f=(0=V_0\subset\cdots \subset V_{k-1}=\bfV_{k-1}=V_k^0\subset\cdots\subset V_k^{t-1}\subset V_k^t=\bfV_{k+1}=V_{k+1}\cdots\subset V_{n}=\bfV),\\
 &x(V_i)\subset V_i,\dim (V_i/V_{i-1})=\hat{\nu}_i,x|_{V_i/V_{i-1}}\in \bfE_{\hat{\nu}_i}^0,i\not=k,k+1, x(V_k^i)\subset V_k^i,\\
 &M_{x|_{V_{k}^i/V_{k}^{i-1}}} \text{ is regular of dimension }(\lambda_i,\lambda_i).
 \end{aligned}\right\}$$
and
$$F_{W'_{0,\underline{\lambda}}}=\left\{(x\in \bfE_{\bfV},f,g)\middle|\begin{aligned}
 &f=(0=V_0\subset\cdots \subset V_{k-1}=\bfV_{k-1}=V_k^0\subset\cdots\subset V_k^{t-1}\subset V_k^t=\bfV_{k+1}=V_{k+1}\cdots\subset V_{n}=\bfV),\\
 &x(V_i)\subset V_i,\dim (V_i/V_{i-1})=\hat{\nu}_i,x|_{V_i/V_{i-1}}\in \bfE_{\hat{\nu}_i}^0,i\not=k,k+1, x(V_k^i)\subset V_k^i,\\
 &M_{x|_{V_{k}^i/V_{k}^{i-1}}} \text{ is regular of dimension }(\lambda_i,\lambda_i),\\
 &0\not=g\in \Hom(M_{x|_{V_k^t/V_{k}^{t-1}}},I_{s'})/k^*.
 \end{aligned}\right\}$$ $r\in P$ acts on $F_{W'_{0,\underline{\lambda}}}$ as $r(x,f,g)=(rxr^{-1},rf,gr^{-1})$ where for $g\in \Hom(M_{x|_{V_k^t/V_{k}^{t-1}}},I_{s'})/k^*,$ $gr^{-1}\in \Hom(M_{x|_{r(V_k^t)/r(V_{k}^{t-1})}},I_{s'})/k^*.$
 
 By the same stratification in Lemma~\ref{2.5}, we only need to consider $\bfG\times_{P}F_{W'_{0,\underline{\lambda}}}\xrightarrow{\pi} \bfE_{\bfV},$ which factors through $\bfG\times_{P}F_{W'_{0,\underline{\lambda}}}\xrightarrow{\pi_{\underline{\lambda}}}\bfG\times_{P}F_{W_{0,\underline{\lambda}}}\xrightarrow{\pi'} \bfE_{\bfV}.$ And $\pi_{\underline{\lambda}}$ is a locally trivial fibration with fiber $\bbP_{\lambda_t-1},$ thus by Lemma~\ref{aiq}, $\mathbf{H}^i({\pi_{\underline{\lambda}}}_!(\overline{\bbQ_l}))$ has the form $\overline{\bbQ_l}(-\frac{i}{2}).$ 
 
 Then, by Lemma~\ref{ar}, we know that we only need to consider sheaf $\overline{\bbQ_l}$ on $ 
 \bfG_{\bfV}\times_{P} F_{\mo}$ and 
 $ 
 \bfG_{\bfV}\times_{P}F_{W_{0,\underline{\lambda}}}$ which both have the form $S_{\hat{\underline{\nu}}'},$ for some $\hat{\underline{\nu}}'\in \mathcal{S}q_{\nu}.$ 
 
 If there exists $\hat{\nu_k}=(r,r),$ $\hat{\nu}_{k+1}=(s,s-1),$ or $\hat{\nu_k}=(s-1,s),$ $\hat{\nu}_{k+1}=(r,r),$ this is the case of Lemma~\ref{2.4} and its preprojective analogue, the proof is the same. If $\hat{\nu_k}=(s',s'-1),$ $\hat{\nu}_{k+1}=(s,s-1),$ with $s'>s$ or $\hat{\nu_k}=(s'-1,s'),$ $\hat{\nu}_{k+1}=(s-1,s),$ with $s'<s,$ by Lemma~\ref{buh}, we only need to consider $$F_{a,b}=\left\{(x\in \bfE_{\bfV},f)\middle|\begin{aligned}
 &f=(0=V_0\subset\cdots \subset\bfV_{k-1}=V_{k-1}\subset V_k\subset \bfV_{k+1}=V_{k+1}\subset\cdots\subset V_{n}=\bfV),x(V_i)\subset V_i,\\
 &\dim (V_i/V_{i-1})=\hat{\nu}_i,x|_{V_i/V_{i-1}}\in \bfE_{\hat{\nu}_i}^0,i\not=k,k+1,\\
 &M_{x|_{\bfV_{k+1}/\bfV_{k-1}}}\cong I_{t}\oplus I_{t},(x,f)|_{{\bfV_{k+1}/\bfV_{k-1}}}\in U_{a,b}.
 \end{aligned}\right\},$$
 
 and $$F_t=\left\{(x\in \bfE_{\bfV},f)\middle|\begin{aligned}
 &f=(0=V_0\subset\cdots \subset\bfV_{k-1}=V_{k-1}\subset V_k\subset \bfV_{k+1}=V_{k+1}\subset\cdots\subset V_{n}=\bfV),x(V_i)\subset V_i,\\
 &\dim (V_i/V_{i-1})=\hat{\nu}_i,x|_{V_i/V_{i-1}}\in \bfE_{\hat{\nu}_i}^0,i\not=k,k+1,\\
 &M_{x|_{\bfV_{k+1}/\bfV_{k-1}}}\cong I_{t}\oplus I_{t},(x,f)|_{{\bfV_{k+1}/\bfV_{k-1}}}\in A_{t,t}.
 \end{aligned}\right\}.$$ 
 
 Denote $\operatorname{Pr}:\bfV_{k+1}\rightarrow \bfV_{k+1}/\bfV_{k-1}.$ And morphism from $F_{a,b}\xrightarrow{P_{a,b}} F_t$ induced by $p_{a,b}=(p^1_{a,b},p_{a,b}^2)$ in Lemma~\ref{buh}, where $p^1_{a,b}$ gives part in $\bfE_{\bfV_{k+1}/\bfV_{k-1}}$ and $p^2_{a,b}$ gives the Grassmannian, is defined as $$\begin{aligned}
 &P_{a,b}(x,(0=V_0\subset\cdots \subset\bfV_{k-1}=V_{k-1}\subset V_k\subset \bfV_{k+1}=V_{k+1}\subset\cdots\subset V_{n}=\bfV))\\&=(x,(0=V_0\subset\cdots \subset\bfV_{k-1}=V_{k-1}\subset \operatorname{Pr}^{-1}(p^2_{a,b}((x,f)|_{\bfV_{k+1}/\bfV_{k-1}}))\subset \bfV_{k+1}=V_{k+1}\subset\cdots\subset V_{n}=\bfV)).
 \end{aligned}$$ It is $P$-equivariant morphism. Moreover, $A_{t,t}$ is $\bfG_{\bfV_{k+1}/\bfV_{k-1}}$-orbit, then, using the same argument as for $\mo$ and $K,$ we only need to study $F_t\times_P\bfG_{\bfV},$ which also has the form $S_{\hat{\underline{\nu''}}},$ for some $\hat{\underline{\nu''}}\in \mathcal{S}q_{\nu}.$ If the middle term has the form $I_{t'}\oplus I_{t''}, t'\not=t'',$ the proof is the same as dealing with $\mo$ and $K.$ 
 
 Since no new $I_{s'}$ or $P_{s'}$ with smaller dimension appears in the above steps, after finitely many steps, we only need to consider the following case.

Fix $(s_i)_{i=1,\cdots,k}$ and $(s'_i)_{i=1,\cdots,l}$ such that $i\leq j$, $s_i\leq s_j$, and $i\leq j$, $s'_i\geq s'_j$,
 $$W'=\left\{(x,f)\middle|\begin{aligned}
 &x\in \bfE_{\bfV}, f=(0=V_0\subset \cdots\subset V_k=V_k^0\subset V_k^1\subset\cdots\subset V_k^t=V_{k+1}\subset\cdots V_{k+1+l}=\bfV),\\
 &x(V_i)\subset V_i, x(V_k^j)\subset V_k^j, M_{x|_{V_i/V_{i-1}}}\cong I_{s_i}, i=1,\cdots,k, \\
 & M_{x|_{V_{i+k+1}/V_{i+k}}}\cong P_{s'_i}, i=1,\cdots,l,\\
 &M_{x|_{V_k^r/V_k^{r-1}}}\text{ is regular with dimension }(\lambda_r,\lambda_r).
 \end{aligned}\right\}\xrightarrow{\pi} \bfE_{\bfV}.$$
We group together the identical preprojective and preinjective parts, and for $(s_i),(s_i')$ above, if $(s_i)=(s_1,\cdots,s_1, s_2,\cdots, s_2,\cdots),$ we denote $t_1=s_1,$ $t_2=s_2,\cdots$ with $a_i$ is the number of occurrence of $s_i$ in $(s_i)$ and if $(s'_i)=(s'_1,\cdots,s'_1, s'_2,\cdots, s'_2,\cdots),$ we denote $t'_1=s'_1,$ $t'_2=s'_2,\cdots$ with $b_i$ is the number of occurrence of $s'_i$ in $(s'_i).$ Now with new sequence $(t_i)_{i=1,\cdots,u}$ and $(t'_i)_{i=1,\cdots,v},$ there is $$W=\left\{(x,f)\middle|\begin{aligned}
 &x\in \bfE_{\bfV}, f=(0=V'_0\subset \cdots\subset V'_u=V_u^0\subset {V'_u}^1\subset\cdots\subset {V'_u}^t=V'_{u+1}\subset\cdots \subset V'_{u+v+1}=\bfV),\\
 &x(V'_i)\subset V'_i, x({V'_k}^j)\subset {V'_k}^j, M_{x|_{V'_i/V_{i'-1}}}\cong I_{t_i}^{\oplus a_i}, i=1,\cdots,u, \\
 & M_{x|_{V'_{i+u+1}/V'_{i+u}}}\cong P_{t'_i}^{\oplus b_{i}}, i=1,\cdots,v, \\
 &M_{x|_{{V'_u}^r/{V'_u}^{r-1}}}\text{ is regular with dimension }(\lambda_r,\lambda_r).
 \end{aligned}\right\},$$ $\pi$ factors through $W'\xrightarrow{\pi'}W\xrightarrow{\pi''} \bfE_{\bfV}.$ We only need to study $\pi'_!(\overline{\bbQ_l}).$
 As in the treatment of the orbit $\mo$ and of $K$ above, we only need to consider $\overline{\bbQ_l}|_W\boxtimes \overline{\bbq_l}(\zeta)$ on $W.$ We work in the mixed derived category of sheaf complexes. Thus $\overline{\bbQ_l}(\zeta)\boxtimes\overline{\bbQ}(\zeta')$ appeared above is isomorphic to $\overline{\bbQ_l}(\zeta')$ for some $\zeta'$ such that $\overline{\bbQ_l}(\zeta'')$ is pure on $\operatorname{Spec}(\bbF_q)$ and $\operatorname{egf}(\overline{\bbQ_l}(\zeta''))=\overline{\bbq_l}.$

 After the preceding reductions, we finally obtain varieties of the form $W_{\alpha'',\underline{\lambda}''}$ and consider the constant sheaf on them. It is straightforward to see that
 $$p_{\alpha,\underline{\lambda}}:W_{\alpha,\underline{\lambda}}\rightarrow \bfE_{\bfV}$$ factors through $$p_{\alpha,\underline{\lambda}}:W_{\alpha,\underline{\lambda}}\rightarrow X(\alpha,m)\xrightarrow{j_{\alpha,m}}\bfE_{\bfV}.$$ And by \cite[Lemma 1.3.10(1)]{Pramod-2021}, $j_{\alpha',m'}^*{j_{\alpha,m}}_!=0$ for $(\alpha',m')\not=(\alpha,m).$ 
 By Lemma~\ref{ar}, simple constituents of ${}^p\mathbf{H}^j\!\left({j_{\alpha',m'}}^*{\pi_{\underline{\nu}}}_!\overline{\bbQ_l}\right)$ are among the simple constituents of $^p\mathbf{H}^j({p_{\alpha',\underline{\lambda}'}}_!\overline{\bbQ_l})\boxtimes \overline{\bbQ_l}(\zeta).$ This completes the proof.
\end{proof}
\section{The proof of Proposition~\ref{propp1}}\label{sec2}
To prove Proposition~\ref{propp1}, we show that the proof of Theorem~\ref{th} is compatible with the chosen $\mathbb{F}_q$-structures

\begin{remark}\label{rem}
Since the flag varieties and locally closed subsets in Lemma~\ref{2.4} and \ref{2.5}, $W_{\alpha,\underline{\lambda}},$ $W_{\alpha,\underline{\lambda}}^{rss},$ $X(\alpha,m)$ and $X(\alpha)$ are stable under Frobenius map. $S_m/S_{\lambda_1}\times \cdots\times S_{\lambda_n}$ is compatible with Frobenius map and the map $r_X:X=X_0\times_{\operatorname{Spec}(\bbF_q)}\operatorname{Spec(\overline{\bbF_q})}\rightarrow X_0$ which is induced by $\operatorname{Spec}(\overline{\bbF_q})\rightarrow \operatorname{Spec}(\bbF_q)$ is faithfully flat and quasi-compact (fpqc). Thus our chosen $\bbF_q$-structures, they preserve open and closed embeddings, smoothness, and finite étale covers by \cite[Proposition 2.36]{vistoli2007notesgrothendiecktopologiesfibered}. Thus $\bbF_q$-structures are compatible with Theorem~\ref{th}.\end{remark}

\begin{lemma}\cite[Theorem~5.4.16]{Pramod-2021}
Every mixed perverse sheaf $\mathcal{F}$ is equipped with a canonical filtration
\[
\cdots \subset W_{i-1}\mathcal{F}\subset W_{i}\mathcal{F}\subset W_{i+1}\mathcal{F}\subset \cdots
\]
such that $\operatorname{gr}_a\mathcal{F}=W_a\mathcal{F}/W_{a-1}\mathcal{F}$ is pure of weight $a$. Moreover, every morphism $\phi:\mathcal{F}\rightarrow \mathcal{G}$ is strictly compatible with this filtration.
\end{lemma}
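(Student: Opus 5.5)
The plan follows Beilinson--Bernstein--Deligne. I would build the filtration directly in the abelian category of mixed perverse sheaves: take $W_a\mathcal{F}$ to be the largest perverse subobject all of whose simple constituents have weight $\le a$. Uniqueness, functoriality and strictness should then follow formally from two vanishing statements about weights. I would use the following standard inputs from \cite{BBD} and \cite{Pramod-2021} as black boxes.
\begin{enumerate}
\item Every simple constituent of a mixed perverse sheaf is pure of some weight.
\item $\Hom(K,L)=0$ whenever $K$ has weights $\le w$ and $L$ has weights $>w$.
\item $\Hom(K,L[1])=0$ under the same hypothesis on $K$ and $L$.
\end{enumerate}
Since $L[1]$ has weights $\ge b+1$ when $L$ has weights $\ge b$, input (3) says: every extension $0\to B\to E\to A\to 0$ with $B$ pure of weight $b$ strictly larger than the weight of a pure $A$ splits.

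First, $W_a\mathcal{F}$ is well defined. The subobjects whose constituents all have weight $\le a$ are closed under finite sums, since a sum of two such subobjects is a quotient of their direct sum. The category is artinian and noetherian, so a largest such subobject exists. The filtration is increasing, it is $0$ for $a\ll0$ and all of $\mathcal{F}$ for $a\gg0$, because $\mathcal{F}$ has finite length and each constituent has a single weight by (1).

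The main step is to show that $\operatorname{gr}_a\mathcal{F}=W_a\mathcal{F}/W_{a-1}\mathcal{F}$ is pure of weight $a$. Equivalently, I must show that $\mathcal{F}/W_{a-1}\mathcal{F}$ has no nonzero subobject with all weights $<a$. I would prove the stronger claim: if $\mathcal{G}$ is mixed perverse and has a constituent of weight $\le a-1$, then $\mathcal{G}$ has a nonzero subobject whose constituents all have weight $\le a-1$. The argument is induction on the length of $\mathcal{G}$. Pick a simple subobject $S\subset\mathcal{G}$. If $S$ has weight $\le a-1$ we are done. Otherwise $S$ has weight $\ge a$, and $\mathcal{G}/S$ is shorter but still has a low-weight constituent. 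By induction, $\mathcal{G}/S$ contains a nonzero subobject $\mathcal{H}$ of weights $\le a-1$, and I may shrink $\mathcal{H}$ to a simple one, of weight $c\le a-1$. Its preimage $E$ in $\mathcal{G}$ is an extension of $\mathcal{H}$ by $S$, where $S$ has weight larger than $c$. By (3) this extension splits, so $\mathcal{H}$ lifts to a subobject of $\mathcal{G}$. Applying the claim to $\mathcal{G}=\mathcal{F}/W_{a-1}\mathcal{F}$, together with the maximality of $W_{a-1}\mathcal{F}$, gives the purity of $\operatorname{gr}_a\mathcal{F}$. I expect this splitting step to be the real content of the proof. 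The point to check carefully is that (3) indeed applies to perverse sheaves, i.e.\ that $\Ext^1$ in the perverse category agrees with $\Hom(-,-[1])$ in $\cD^b_m$; this holds since the perverse heart is the heart of a $t$-structure.

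Finally I would derive uniqueness and strictness. Let $\phi:\mathcal{F}\to\mathcal{G}$ be a morphism. The image $\phi(W_a\mathcal{F})$ is a quotient of $W_a\mathcal{F}$, so it has weights $\le a$ and hence lies in $W_a\mathcal{G}$; this gives functoriality. For strictness I must show $\phi(W_a\mathcal{F})=\phi(\mathcal{F})\cap W_a\mathcal{G}$. The image $\phi(\mathcal{F})$ is a quotient of $\mathcal{F}$, and the induced filtration on a quotient is again a weight filtration by exactness of $\operatorname{gr}$ on constituents. Any filtration with pure graded pieces of increasing weights coincides with the one defined above, by (2) and the maximality argument. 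Comparing the filtration on $\phi(\mathcal{F})$ induced from $\mathcal{F}$ with the one induced from $\mathcal{G}$ then yields strictness.
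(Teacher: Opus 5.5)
The paper gives no proof of its own here: it quotes \cite[Theorem~5.4.16]{Pramod-2021} (that is, BBD~5.3.5), and Sun's theorem for the $\bfG_{\bfV}$-equivariant case. Your argument is essentially the standard proof behind that citation, and it is correct: $W_a\mathcal{F}$ as the largest subobject whose constituents all have weight $\le a$, purity of $\operatorname{gr}_a\mathcal{F}$ by induction on length using the vanishing of $\Ext^1(\mathcal{H},S)=\Hom(\mathcal{H},S[1])$ when $S$ is pure of larger weight than $\mathcal{H}$, and strictness from uniqueness of the weight filtration on $\phi(\mathcal{F})$.

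The one inaccuracy is that input (2) must be restricted to perverse $K,L$. For general complexes it fails: on $\operatorname{Spec}\bbF_q$ the complex $\overline{\bbQ_l}[1]$ is pure of weight $1$, yet the nonsplit extension of $\overline{\bbQ_l}$ by itself with unipotent Frobenius gives $\Hom(\overline{\bbQ_l},\overline{\bbQ_l}[1])\neq 0$. You only apply (2) to perverse sheaves, where it does hold because $\Hom(K,L[-1])$ over $\overline{\bbF_q}$ vanishes, so the proof is unaffected.
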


By \cite[Theorem~2.1]{sunarticle}, this lemma is still true for $\bfG_{\bfV}$-equivariant perverse sheaves.

We will consider $$\mathcal{L}=\overline{\bbQ_l}|_{\bfE_{\bfV_{s_1}}}\diamond \cdots \diamond \overline{\bbQ_l}|_{\bfE_{\bfV_{s_r}}}
\diamond \overline{\bbQ_l}|_{\bfE_{\bfV_{\lambda_1}}}\diamond \overline{\bbQ_l}|_{\bfE_{\bfV_{\lambda_2}}}\cdots \diamond \overline{\bbQ_l}|_{\bfE_{\bfV_{\lambda_k}}}
\diamond \overline{\bbQ_l}|_{\bfE_{\bfV'_{s'_1}}}\diamond \cdots \diamond \overline{\bbQ_l}|_{\bfE_{\bfV'_{s'_t}}}$$ from equation~\ref{alpha} satisfying $\lambda_i=1$ which is a direct summand up to shift and Tate twist of the flag sheaf complex $L_{\underline{\nu}},$ where $\underline{\nu}=(\nu_1,\cdots,\nu_k),$ $\nu_i=i_1$ or $i_2.$ $IC(X(\alpha),L_{\chi})$ is a direct summand of $\mathcal{L}.$ Since the flag sheaf complex is defined over $\bbF_q,$ denote the corresponding element as $\mathcal{L}_0,$ $\operatorname{egf}(\mathcal{L}_0)=\mathcal{L}.$ We will prove that $\mathcal{L}_0$ is pointwise pure in the next proposition.
Now recall that $j_{\alpha'}:X(\alpha')\rightarrow \bfE_{\bfV}.$

We now prove Proposition~\ref{propp1}.
\begin{proof}
 We already have that $\supp(\mathcal{L}_0)=\overline{X(\alpha,m)_0}.$

 If $x\in X(\alpha,m)_0(\bbF_{q^l}), \operatorname{Spec}(\bbF_{q^l})\xrightarrow{i_x} \bfE_\bfV,$ since the property of \begin{equation}\label{eq12}
 i_x^*\overline{\bbQ_l}|_{\bfE_{\bfV_{s_1}}}\diamond \cdots \diamond \overline{\bbQ_l}|_{\bfE_{\bfV_{s_r}}}
\diamond \overline{\bbQ_l}|_{\bfE_{\bfV_{\lambda_1}}}\diamond \overline{\bbQ_l}|_{\bfE_{\bfV_{\lambda_2}}}\cdots \diamond \overline{\bbQ_l}|_{\bfE_{\bfV_{\lambda_k}}}
\diamond \overline{\bbQ_l}|_{\bfE_{\bfV'_{s'_1}}}\diamond \cdots \diamond \overline{\bbQ_l}|_{\bfE_{\bfV'_{s'_t}}},\lambda_i=1, 
 \end{equation} is from a map $S\rightarrow \operatorname{Spec(\bbF_{q^l})}$, where $S$ admits an affine paving as in Jordan quiver \cite{articleff}. Thus the proposition is proved for $x\in X(\alpha,m)_0(\bbF_{q^l})$. Furthermore, $${j_{\alpha}}^*\mathcal{L}_0=\oplus_{\chi\in X(S_m)}L_{\chi}^{\oplus s_{\chi}}[\dim X(\alpha)](\frac{\dim X(\alpha)}{2})$$ by the etale covering map, and $\operatorname{egf}({}^p\mathbf{H}^{i}(j_{\alpha,m}^*\mathcal{L}_0))={}^p\mathbf{H}^{i}(j_{\alpha,m}^*\mathcal{L})=0$ for $i\not=0.$ The functor $\operatorname{egf}$ is faithful by \cite[Proposition 5.3.9]{Pramod-2021}. Thus ${}^p\mathbf{H}^{i}(j_{\alpha,m}^*\mathcal{L}_0)=0$ for $i\not=0,$ hence $j_{\alpha,m}^*\mathcal{L}_0$ is perverse. If we denote the closed embedding $i:X(\alpha,m)-X(\alpha)\rightarrow X(\alpha,m),$ As well as $\operatorname{egf}({}^p\bfH^{-k} (i^!j_{\alpha,m}^*\mathcal{L}_0))=0,$ $\operatorname{egf}({}^p\bfH^{k}( i^*j_{\alpha,m}^*\mathcal{L}_0))=0,$ for $k\geq 0.$ Thus ${}^p\tau^{\leq 0} i^!j_{\alpha,m}^*\mathcal{L}_0=0,$ ${}^p\tau^{\geq 0} i^*j_{\alpha,m}^*\mathcal{L}_0=0.$ By \cite[Lemma 3.3.4]{Pramod-2021}\cite[Lemma 6.1]{laszlo2009}, it follows that $j_{\alpha,m}^*\mathcal{L}_0=\oplus_{\chi\in X(S_m)}j_{\alpha,m}^*IC(X(\alpha)_0,L_{\chi})^{\oplus s_{\chi}}.$ Thus it follows that $j_{\alpha,m}^*IC(X(\alpha)_0,L_{\chi})$ is pointwise pure. By the same argument, $j_{\alpha',m'}^*IC(X(\alpha')_0,L_{\chi'})$ are pointwise pure for any $\alpha',m'.$
 
 We now prove that for $X(\alpha')_0\subset \overline{X(\alpha)_0},$ $\ml_0$ is pointwise pure on $X(\alpha')_0.$ For $x\in X(\alpha')_0(\bbF_{q^n})\subset X(\alpha',m')_0(\bbF_{q^n}),$ where $(\alpha',m')<(\alpha,m),$ denote the corresponding representation by $M_x$. By Theorem~\ref{th}, and since the category of lisse sheaves (isomorphic to $\overline{\bbq_l}[\pi_1(X(\alpha'))]-\operatorname{mod}^{fc}$) is closed under kernels, cokernels, and extensions (by reducing to a finite extension $E$ of $\bbQ_l$ and its integral ring $o$, and considering $E\otimes_{o}\Ext_{\cD_c^b(X(\alpha'),o)}(L_1,L_2)$ for $o$-lisse sheaf $L_1,L_2.$ For $E\otimes_o L_{torsion}=0,$ we can choose $L_1,L_2$ torsion free), we have ${j_{\alpha'}}^*\mathcal{L}\in \cD^b_{\bfG_{\bfV},loc}(X(\alpha')).$ Using fpqc property as in Remark~\ref{rem}, $j_{\alpha'}^*\mathcal{L}_0$ is lisse sheaf complex whose simple constituents of $\mathbf{H}^i(j_{\alpha'}^*\mathcal{L}_0)$ have the form $L_{\chi'}\boxtimes \overline{\bbQ_l}(\zeta)$, where $\overline{\bbQ_l}(\zeta)$ is pure of weight $l$ on $\operatorname{Spec}\bbF_q$ for some $l\in \bbZ$ and $\operatorname{egf}(\overline{\bbQ_l}(\zeta))=\overline{\bbQ_l},$ $\chi\in X(S_{m'})$. By \cite[Theorem 3.4.1]{PD}, any lisse sheaf (local system) has a weight filtration, $0=L_0\subset L_1\subset\cdots\subset L_n=L$, where $L_i/L_{i-1}$ is a pure lisse sheaf and $wt(L_i/L_{i-1})<wt(L_{i+1}/L_i)$ for $i=1,\cdots,n-1.$ Moreover, any $\bbF_{q^l}$-point gives the same weight on $L_i/L_{i-1}.$ Thus, if for $\bbF_{q^n}$-point $x,$ $(L_{i}/L_{i-1})_x$ is of weight $0,$ then $(L_{i}/L_{i-1})$ is pointwise pure of weight $0.$ Indeed, $X(\alpha')$ is irreducible.

 By \cite[Proposition 4.14]{Lusztig1992}, $X(\alpha')_0$ is smooth with dimension $m'+\dim\mo_x.$ We consider the following map,
 $$\phi:\oplus_{j\in I}\Hom_k(\bfV_i,\bfV_i)/\Hom_{kQ}(M_x,M_x)\rightarrow \bfE_{\bfV}$$ defined by $g\rightarrow gx-xg.$

 Write $\bfV=\oplus \bfV_s^P\oplus \bfV_1\oplus \cdots \oplus \bfV_{m'}\oplus \oplus\bfV_r^I$ for the decomposition $M_x=P_{\alpha'}\oplus r_1\oplus \cdots \oplus r_{m'}\oplus I_{\alpha'},$ with $M_{x|_{\bfV_s^P}}=P_s^{{\alpha'_s}^P},$ $M_{x|_{\bfV_r^I}}=I_r^{{\alpha'_r}^I}$ and $M_x|_{\bfV_i}=r_i.$ After replacing $n$ if necessary, $x$ is decomposed as above over the field $\bbF_{q^n}.$ We choose a basis of $\bfV_i$ such that $x|_{\bfV_i}=(x_\alpha,x_\beta)=(1,t_i),$ where $\alpha,\beta$ are arrows, and consider the space $T_i:=k(0,1)\subset \bfE_{\bfV_i}.$ Denote $\bfE_{\bfV.\bfV'}=\oplus_{h}\Hom_k(\bfV_{s(h)},\bfV'_{t(h)}).$

 Let $\mathcal{T}=\im \phi\times \Pi_{i=1}^{m'}T_i\subset \bfE_{\bfV},$ with the embedding $(x,x_1,\cdots,x_{m'})\rightarrow x+\sum_{i=1}^{m'} x_i$ and consider $x+\mathcal{T}$. It is straightforward to see that $x+\mathcal{T}$ is the tangent space of $X(\alpha')$ at $x.$ And $\mathcal{T}$ contains $\Pi_{s\leq s'}\bfE_{\bfV^P_{s},\bfV^P_{s'}}\times \Pi_{s,i}\bfE_{\bfV_s^P,\bfV_i}\times\Pi_{j,r}\bfE_{\bfV_j,\bfV^I_r}\times \Pi_{s,r}\bfE_{\bfV^P_s,\bfV^I_r}\times\Pi_{r\geq r'}\bfE_{\bfV^I_{r},\bfV^I_{r'}}\times \Pi_{i\leq j}\bfE_{\bfV_i,\bfV_j}.$

 We only need to show that $\bfE_{\bfV_i,\bfV_i}\subset \mathcal{T},$ for other sets are contained in $\im \phi.$

 For $T_i$ and $\phi(\oplus_{j\in I}\Hom_k({\bfV_i}_j,{\bfV_i}_j)/\Hom_{kQ}({r_i},r_i))$ span $\bfE_{\bfV_i,\bfV_i},$ this completes the proof.

 Consider the map $\lambda(t)\in \bfG_{\bfV}$ as $\lambda(t)|_{\bfV_s^P}=t^{a_s},$ $\lambda(t)|_{\bfV_r^I}=t^{b_r},$ $\lambda(t)|_{\bfV_i}=t^{c_i},$ such that $a_s<c_i<b_r$ and $a_s<a_{s'}$ for $s<s'$, $b_r>b_{r'}$ for $r<r'$, $c_1<c_2<\cdots<c_{m'}.$ Then $\lambda(t)\im\phi\subset \im\phi$ and $\lambda(t)(T_i)\subset T_i.$ Thus $\mathcal{T}$ is $\lambda(t)$-stable.
 
 We then choose the $\lambda(t)$-stable complement of $\mathcal{T}$ in $\bfE_{\bfV}$ as $\mathcal{T}'.$ $$\mathcal{T}'\subset \Pi_{s> s'}\bfE_{\bfV^P_{s},\bfV^P_{s'}}\times \Pi_{s,i}\bfE_{\bfV_i,\bfV_s^P}\times\Pi_{j,r}\bfE_{\bfV^I_r,\bfV_j}\times \Pi_{s,r}\bfE_{\bfV^I_r,\bfV^P_s}\times\Pi_{r< r'}\bfE_{\bfV^I_{r},\bfV^I_{r'}}\times \Pi_{i> j}\bfE_{\bfV_i,\bfV_j}.$$ 
 
 We now consider $x+(\mathcal{T'}\times\Pi_{i=1}^{m'}T_i),$ it is transverse to the orbit $\mo_x$ at $x.$ 
 
 The action of $\lambda(t)$ preserves $\mathcal{T}$ and $\mathcal{T'}.$ And $\lambda(t)$ acts trivially on $x+T_i,$ $i=1,\cdots,m'.$
 
 Furthermore, $\lim_{t\rightarrow0}\lambda(t)u=0,$ for $u\in \mathcal{T'}.$ Thus fixed-point set $(x+(\mathcal{T'}\times\Pi_{i=1}^{m'}T_i))^{G_m}$ is $x+\Pi_{i=1}^{m'}T_i.$ 
 
 Following \cite[Lemma 10.5.2]{Pramod-2021}, when we consider $\bfG_{\bfV}\times (x+(\mathcal{T'}\times\Pi_{i=1}^{m'}T_i))\xrightarrow{f} \bfE_{\bfV},$ with $f(g,x')=g(x'),$ $x$ is a smooth point of $f.$ And the set of smooth points of $f,$ denoted as $\tilde{S},$ is open and stable under $\bfG_{\bfV}$ and $\operatorname{Fr}^n.$ Thus we can find an open subset $S$ of $(x+(\mathcal{T'}\times\Pi_{i=1}^{m'}T_i))$ such that $\tilde{S}\cong \bfG_{\bfV}\times S\xrightarrow{f} \bfE_{\bfV}$ is smooth with $S$ stable under $\operatorname{Fr}^n.$ Since $G_m$ acts on $(x+(\mathcal{T'}\times\Pi_{i=1}^{m'}T_i))$ and $G_m\subset \bfG_{\bfV},$ it follows that $S$ is $G_m$-stable. We now consider $U=X(\alpha')\cap S\cap (x+\Pi_{i=1}^{m'}T_i),$ one checks that $U$ is an open subset of $(x+\Pi_{i=1}^{m'}T_i),$ thus $U$ is smooth.
 
 Let $U'=S\cap (U\times\mathcal{T'}).$ It is an open subset of $(x+(\mathcal{T'}\times\Pi_{i=1}^{m'}T_i))$ and stable under $G_m,$ with fixed-point set $U.$ Then $\bfG_{\bfV}\times U'\xrightarrow{f} \bfE_{\bfV}$ is smooth morphism with relative dimension $\dim\bfG-\dim \mo_x.$ Let $U'\xrightarrow{i}\bfG_{\bfV}\times U'$ with $i(u)=(1,u).$ As in \cite[Proposition 5.7.3, Theorem 6.5.10]{Pramod-2021}, it follows that $i^*[-\dim \bfG_{\bfV}](\frac{-\dim \bfG_{\bfV}}{2})=i^![\dim \bfG_{\bfV}](\frac{\dim \bfG_{\bfV}}{2})$ is equivalence of categories. Furthermore, $i^*$ takes pure sheaves to pure sheaves. Thus $i^*f^*[-\dim \mo_x](\frac{-\dim \mo_x}{2})$ takes pure sheaves to pure sheaves. We now consider inclusion map $e:U\rightarrow U'$ and $p:U'\rightarrow U$ with $p(u)=\lim_{t\rightarrow0}\lambda(t)u$ as in \cite[Theorem 2.10.3]{Pramod-2021}.

 If we denote the inclusion map $i_{U'}:U'\rightarrow \bfE_{\bfV}$ and $i_{U'}=f\circ i,$ then $i_{U'}$ is $G_m$-equivariant. By \cite{BBD}, for proper map $p,$ $p_!$ preserves the weight. Thus $\ml_0$ is pure of weight $0$. Thus $i^*f^*(\mathcal{L}_0)$ is $G_m$-equivariant and pure of weight $0$. By \cite[Theorem 2.10.3]{Pramod-2021}, it follows that $e^*i^*f^*(\mathcal{L}_0)\cong p_*i^*f^*(\mathcal{L}_0).$ Because $e^*$ preserves weight $\leq 0$ and $p_*$ preserves weight $\geq 0,$ $e^*i^*f^*(\mathcal{L}_0)$ is pure. Since $U$ is smooth, considering $U\xrightarrow{i_{u}} X(\alpha)_0,$ it follows that $e^*i^*f^*(\mathcal{L}_0)\in \cD^b_{loc,m}(U).$ Thus by \cite[6.2.5]{PD}, it follows that $e^*i^*f^*(\mathcal{L}_0)$ is pointwise pure of weight $0$. Thus $j_{\alpha}^*\mathcal{L}_0$ is pointwise pure of weight $0$.

 Now consider the subset $X(\alpha',m')_0\subset \overline{X(\alpha)_0}.$ If we denote the open embedding $j^0:X(\alpha')_0\rightarrow X(\alpha',m')_0,$ since ${j^0}^*$ preserves the perverse $t$-structure and weight and ${}^p\mathbf{H}^n(\mathcal{F})=\mathbf{H}^{n-\dim(X(\alpha'))}(\mathcal{F})[\dim X(\alpha')]$ for $\mathcal{F}\in \cD^b_{\bfG_0,loc}(X(\alpha')_0)$, by Theorem~\ref{th} and the faithfulness of the functor $\operatorname{egf}$ on $\operatorname{Perv}(X_0,\overline{\bbQ_l}),$ it follows that ${}^p\mathbf{H}^l(j_{\alpha',m'}^*\mathcal{L}_0)$ is pure of weight $l.$ In the triangle $${}^p\tau^{<l}(j_{\alpha',m'}^*\mathcal{L}_0)\rightarrow {}^p\tau^{\leq l}(j_{\alpha',m'}^*\mathcal{L}_0)\rightarrow {}^p\mathbf{H}^l(j_{\alpha',m'}^*\mathcal{L}_0)[-l]\xrightarrow{+1},$$ if ${}^p\tau^{<l}(j_{\alpha',m'}^*\mathcal{L}_0)$ and ${}^p\mathbf{H}^l(j_{\alpha',m'}^*\mathcal{L}_0)[-l]$ are pure of weight $0,$ then ${}^p\tau^{\leq l}(j_{\alpha',m'}^*\mathcal{L}_0)$ is pure of weight $0.$ Thus by induction on $l,$ $j_{\alpha',m'}^*\mathcal{L}_0$ is pure of weight zero and $j_{\alpha',m'}^*\mathcal{L}=\oplus_l {}^p\mathbf{H}^l(j_{\alpha',m'}^*\mathcal{L})[-l]$ has the form $\oplus_{l,\chi'}j_{\alpha',m'}^*(IC(X(\alpha'),L_{\chi'}))^{\oplus s_{l,\chi'}}[l].$ 

 Let closed embedding $i':X(\alpha',m')_0-X(\alpha')_0\rightarrow X(\alpha',m')_0$ and consider ${}^p\tau^{\leq 0}{i'}^!{}^p\bfH^l(j^*_{\alpha',m'}\mathcal{L}_0)$ and ${}^p\tau^{\geq 0}{i'}^*{}^p\bfH^l(j^*_{\alpha',m'}\mathcal{L}_0).$ After applying $\operatorname{egf},$ ${}^p\bfH^{-k}({i'}^!{}^p\bfH^l(j^*_{\alpha',m'}\mathcal{L}_0))$ and ${}^p\bfH^{k}({i'}^*{}^p\bfH^l(j^*_{\alpha',m'}\mathcal{L}_0))$ are $0$ for $k\geq 0.$ By the faithfulness of $\operatorname{egf},$ it follows that ${i'}^*{}^p\bfH^l(j^*_{\alpha',m'}\mathcal{L}_0)\in {}^p\cD^{\leq -1}$ and ${i'}^!{}^p\bfH^l(j^*_{\alpha',m'}\mathcal{L}_0)\in {}^p\cD^{\geq 1},$ by \cite[Lemma 3.3.4]{Pramod-2021} and \cite[Lemma 6.1]{laszlo2009}, ${}^p\bfH^l(j^*_{\alpha',m'}\mathcal{L}_0)=j^0_{!*}({}^p\bfH^l({j_{\alpha'}}^*\mathcal{L}_0)).$
 By \cite[Lemma 3.4.3]{Pramod-2021}, and the faithfulness of $\operatorname{egf},$ any simple perverse constituent of ${}^p\mathbf{H}^l(j_{\alpha',m'}^*\mathcal{L}_0)$ has the form $j_{\alpha',m'}^*(IC(X(\alpha')_0,L_{\chi}))\boxtimes \overline{\bbQ_l}(\zeta),$ where $\overline{\bbQ_l}(\zeta)$ is pure of weight $l$ on $\operatorname{Spec}\bbF_q$ and $\operatorname{egf}(\overline{\bbQ_l}(\zeta))=\overline{\bbQ_l}.$ Moreover, $j_{\alpha',m'}^*(IC(X(\alpha')_0,L_{\chi})$ is pointwise pure of weight $0$ and $\boxtimes \overline{\bbQ_l}(\zeta)$ preserves pointwise purity and purity. If $A\rightarrow B\rightarrow C\xrightarrow{+1}$ is a triangle and $A$ and $C$ are pointwise pure of weight $a$, then $B$ is pointwise pure of weight $a$. Thus $j_{\alpha',m'}^*\mathcal{L}_0$ is pointwise pure. 

This completes the proof that $\mathcal{L}_0$ is pointwise pure. 
\end{proof} 
\subsection*{Acknowledgements} 
Y. Wu and J. Xiao are partially supported by National Natural Science Foundation of China [Grant No. 12471030].
\end{spacing}
\bibliography{ref}

\begin{thebibliography}{10}

\bibitem{Pramod-2021}
P.~N. Achar.
\newblock {\em Perverse sheaves and applications to representation theory}, volume 258 of {\em Mathematical Surveys and Monographs}.
\newblock American Mathematical Society, Providence, RI, 2021.

\bibitem{2002Crystal}
J.~Beck and H.~Nakajima.
\newblock Crystal bases and two-sided cells of quantum affine algebras.
\newblock {\em Duke Mathematical Journal}, 123(2):335--402, 2002.

\bibitem{BBD}
A.~A. Be\u{\i}linson, J.~Bernstein, and P.~Deligne.
\newblock Faisceaux pervers.
\newblock In {\em Analysis and topology on singular spaces, {I} ({L}uminy, 1981)}, volume 100 of {\em Ast\'{e}risque}, pages 5--171. Soc. Math. France, Paris, 1982.

\bibitem{10.1215/00127094-2681278}
J.~Brundan, A.~Kleshchev, and P.~J. McNamara.
\newblock {Homological properties of finite-type Khovanov–Lauda–Rouquier algebras}.
\newblock {\em Duke Mathematical Journal}, 163(7):1353 -- 1404, 2014.

\bibitem{articleff}
C.~Concini, G.~Lusztig, and C.~Procesi.
\newblock {Homology of the Zero-Set of a Nilpotent Vector Field on a Flag Manifold}.
\newblock {\em J. Amer. Math. Soc.}, 1:15--34, 01 1988.

\bibitem{PD}
P.~Deligne.
\newblock {La conjecture de Weil : II}.
\newblock {\em Mathématiques de l'IHÉS}, 52:137--252, 1980.

\bibitem{Jouanolou}
J.~Jouanolou.
\newblock Cohomologie de quelques schemas classiques et theorie cohomologique des classes de chern.
\newblock {\em In: Illusie, L. (eds) Séminaire de Géométrie Algébrique du Bois-Marie 1965–66 SGA 5. Lecture Notes in Mathematics.}, 589.

\bibitem{2012PBW}
S.~Kato.
\newblock {Poincaré–Birkhoff–Witt bases and Khovanov–Lauda–Rouquier algebras}.
\newblock {\em Duke Mathematical Journal}, 163(3):619 -- 663, 2014.

\bibitem{cd088214-009e-3813-9a86-97f9a299a24c}
S.~Kato.
\newblock An algebraic study of extension algebras.
\newblock {\em American Journal of Mathematics}, 139(3):567--615, 2017.

\bibitem{lan2025structurecoefficientsquantumgroups}
Y.~Lan, Y.~Wu, and J.~Xiao.
\newblock Structure coefficients for quantum groups, 2025.
\newblock arXiv 2510.25575.

\bibitem{laszlo2009}
Y.~Laszlo and M.~Olsson.
\newblock {Perverse t-structure on Artin stacks}.
\newblock {\em Mathematische Zeitschrift}, 261(4):737--748, 2009.

\bibitem{li2007notesaffinecanonicalmonomial}
Y.~Li.
\newblock Notes on affine canonical and monomial bases, 2007.
\newblock arXiv 0610449.

\bibitem{Li2006ARquiverAT}
Y.~Li and Z.~Lin.
\newblock {AR-quiver approach to affine canonical basis elements}.
\newblock {\em Journal of Algebra}, 318:562--588, 2006.

\bibitem{linxiao}
Z.~Lin, J.~Xiao, and G.~Zhang.
\newblock {Representations of Tame Quivers and Affine Canonical Bases}.
\newblock {\em Publ. Res. Inst. Math. Sci.}, 47(4):825–885, 2011.

\bibitem{lusztig1990canonical}
G.~Lusztig.
\newblock Canonical bases arising from quantized enveloping algebras.
\newblock {\em Journal of the American Mathematical Society}, 3(2):447--498, 1990.

\bibitem{Lusztig1992}
G.~Lusztig.
\newblock {Affine quivers and canonical bases}.
\newblock {\em Publications Math{\'e}matiques de l'Institut des Hautes {\'E}tudes Scientifiques}, 76(1):111--163, Dec 1992.

\bibitem{lusztig2010introduction}
G.~Lusztig.
\newblock {\em Introduction to quantum groups}, volume 110 of {\em Progress in Mathematics}.
\newblock Birkh\"{a}user Boston, Inc., Boston, MA, 1993.

\bibitem{lusztig1998canonical}
G.~Lusztig.
\newblock {Canonical bases and Hall algebras}.
\newblock In {\em Representation theories and algebraic geometry}, pages 365--399. Springer, 1998.

\bibitem{MCGERTY2005411}
K.~McGerty.
\newblock {The Kronecker quiver and bases of quantum affine sl2}.
\newblock {\em Advances in Mathematics}, 197(2):411--429, 2005.

\bibitem{McNamara2017}
P.~J. McNamara.
\newblock {Representations of Khovanov--Lauda--Rouquier algebras III: symmetric affine type}.
\newblock {\em Mathematische Zeitschrift}, 287(1):243--286, Oct 2017.

\bibitem{1980Etale}
J.~S. Milne.
\newblock {\em {Etale Cohomology (PMS-33)}}.
\newblock Princeton University Press, 1980.

\bibitem{1994Geometric}
D.~Mumford, J.~Fogarty, and F.~Kirwan.
\newblock Geometric invariant theory.
\newblock {\em Springer Verlag Berlin New York}, 1994.

\bibitem{wreo6318}
J.~A. Sauter.
\newblock Springer theory and the geometry of quiver flag varieties.
\newblock University of Leeds, September 2013.

\bibitem{shoji2026monomialbasescanonicalbases}
T.~Shoji and Z.~Zhou.
\newblock Monomial bases and canonical bases for quantum affine algebras, 2026.
\newblock arXiv 2605.14247.

\bibitem{Steinberg1976}
R.~Steinberg.
\newblock On the desingularization of the unipotent variety.
\newblock {\em Inventiones Mathematicae}, 36(1):209--224, Dec 1976.

\bibitem{sunarticle}
S.~Sun.
\newblock {Decomposition Theorem for Perverse sheaves on Artin stacks}.
\newblock {\em Duke Mathematical Journal}, 161, 09 2010.

\bibitem{vistoli2007notesgrothendiecktopologiesfibered}
A.~Vistoli.
\newblock {Notes on Grothendieck topologies, fibered categories and descent theory}, 2007.
\newblock arXiv math/0412512.

\bibitem{XIAO2023510}
J.~Xiao, H.~Xu, and M.~Zhao.
\newblock {Tame quivers and affine bases I: A Hall algebra approach to the canonical bases}.
\newblock {\em Journal of Algebra}, 633:510--562, 2023.

\bibitem{+2000+97+116}
P.~Zhang.
\newblock {PBW-basis for the composition algebra of the Kronecker algebra}.
\newblock {\em Journal für die reine und angewandte Mathematik}, 2000(527):97--116, 2000.

\end{thebibliography}
\end{document}